\documentclass[11pt]{amsart}

\usepackage{graphicx,amssymb,amsmath,xcolor,enumerate}
\usepackage{hyperref}
\hypersetup{
	bookmarks=true,         
	pdffitwindow=false,     
	pdfstartview={FitH},    
	colorlinks=true,      
	citecolor=red,
}
\usepackage{dsfont}
\usepackage{bbm}
\usepackage{enumitem}

\usepackage{enumitem}

\usepackage{float}

\makeatletter
\def\l@subsection{\@tocline{2}{0pt}{2.5pc}{5pc}{}}
\makeatother

\usepackage{geometry}
\DeclareSymbolFont{largesymbol}{OMX}{yhex}{m}{n}
\DeclareMathAccent{\Widehat}{\mathord}{largesymbol}{"62}
\newcommand*\di{\mathop{}\!\mathrm{d}}

\def\e{\varepsilon}

\numberwithin{equation}{section}              
\newtheorem{theorem}{Theorem}[section]

\newtheorem{lemma}{Lemma}[section]
\newtheorem{proposition}{Proposition}[section]
\newtheorem*{proposition*}{Proposition}

\newtheorem*{corollary*}{Corollary}

\newtheorem{definition}{Definition}[section]
\newtheorem*{definitions*}{Definitions}

\newtheorem*{acknowledgements*}{Acknowledgements}

\newtheorem*{conjecture*}{\bf Conjecture}
\newtheorem{example}{\bf Example}[section]
\newtheorem*{example*}{\bf Example}
\theoremstyle{remark}
\newtheorem{remark}{\bf Remark}[section]

\author{Yu Gao}
\address[Y. Gao]{School of Science, Harbin Institute of Technology (Shenzhen), Shenzhen, 518055 China}
\email{gaoyu2024@hit.edu.cn}

\author{Hao Liu}
\address[H. Liu]{Department of Mathematics, Faculty of Science, University of Macau, Taipa, Macau}
\email{haoliu@um.edu.mo}

\keywords{uniqueness, generalized framework, integrable system, asymptotic behavior of characteristics, kink-wave}

\subjclass[2020]{35C06, 35B40, 76D05}

\begin{document}

\date{}                                     
\title[2HS system]{Existence, uniqueness and long-time behavior of  the $\lambda$-dissipative solutions to the two-component Hunter-Saxton system}

\maketitle

\begin{abstract}
In this paper, we construct the explicit characteristics for the $\lambda$-dissipative solutions ($\lambda\in[0,1]$) to the two-component Hunter--Saxton (2HS) system. Using these characteristics, we provide a comprehensive study of the existence, uniqueness, and asymptotic behavior of these solutions. 

For the fully dissipative case ($\lambda=1$), uniqueness follows from the absence of outgoing cusps. In the partially dissipative regime ($0<\lambda<1$), outgoing cusps are present. We formulate an exact Eulerian dissipation rule which identifies the energy that has already passed through wave breaking by the intrinsic condition $\rho=0$ and $u_x\geq 2/t$. This rule determines the dissipated part of the energy measure and yields uniqueness. This uniqueness applies to the classical Hunter-Saxton equation and seems to be the first uniqueness result for the general $\lambda$-dissipative solutions.

Concerning the large-time dynamics, we show that the density $\rho$ and the singular part of the energy measure decay to zero as $t\to\infty$, indicating that all energy is eventually concentrated in the $u$-component. Moreover, we derive the leading-order asymptotic term, which takes the form of a kink-wave determined by the system's remaining energy. This kink-wave can be explicitly computed from the initial data and the dissipation parameter $\lambda$.

\end{abstract}

{
\hypersetup{linkcolor=blue}
\tableofcontents
}

\section{Introduction}\label{sec:intro}

\subsection{Background}
This paper focuses on the regularity structure, existence, uniqueness, and the large time asymptotic behavior of the $\lambda$-dissipative ($\lambda  \in [0,1]$) solutions to the two-component Hunter-Saxton system (2HS) on the whole real line $\mathbb{R}$:  for $x\in\mathbb{R}$, $t>0$,
\begin{equation}\label{eq:2HS}
\left\{
\begin{aligned}
&u_t+uu_x=\frac{1}{2}D_{\kappa}^{-1}(u_x^2+\rho^2),\\
&\rho_t+(\rho u)_x=0,
\end{aligned}
\right.
\end{equation}
Here $D_\kappa^{-1}$ stands for the anti-derivative operator, which is defined by
\[
D_\kappa^{-1} (u_x^2+\rho^2)=\left[(1-\kappa)\int_{-\infty}^x-\kappa\int_{x}^{+\infty}\right](u_x^2+\rho^2)(y,t)\di y
\]
for some real number $\kappa\in\mathbb{R}$.  
The number $\lambda  \in [0,1]$ is a parameter that measures the dissipation rate of the energy at the blow-up point (see Definition \ref{def:weak} for $\lambda$-dissipative solutions below).  The 2HS system \eqref{eq:2HS} was first discovered in \cite[Eq. (47)]{Olver} as an example derived by the tri-Hamiltonian duality approach from the It\^{o} system of coupled nonlinear wave equations. It was a particular case of the Gurevich-Zybin system describing the dynamics in a model of non-dissipative dark matter; see \cite{pavlov2005gurevich} and references therein. From a geometric point of view,  the 2HS system can be interpreted as the Euler equation on the superconformal algebra of contact vector fields on the $1|2$-dimensional supercircle \cite{lenells2009n}. Moreover, It is an integrable system, which has a bi-Hamiltonian structure and a Lax-pair \cite{Olver,constantin2008integrable,lenells2009n}. The bi-Hamiltonian structure of the  2HS system \eqref{eq:2HS} is given by:
\begin{align*}
\frac{\partial}{\partial t}
\begin{pmatrix}
u\\
\rho
\end{pmatrix}
=
\begin{pmatrix}
u_xD_{\kappa}^{-2}-D_{\kappa}^{-2}u_x & D_{\kappa}^{-2}\rho\partial_x\\
\partial_x\rho D_{\kappa}^{-2} &  0
\end{pmatrix}
\begin{pmatrix}
{\delta \mathcal{H}_1}/{\delta u}\\
{\delta \mathcal{H}_1}/{\delta \rho}
\end{pmatrix},
\end{align*}
and
\begin{equation*}
\frac{\partial}{\partial t}
\begin{pmatrix}
u\\
\rho
\end{pmatrix}
=
\begin{pmatrix}
D_{\kappa}^{-1} & 0\\
0 & -\partial_x
\end{pmatrix}
\begin{pmatrix}
\delta \mathcal{H}_2/\delta u \\ \delta \mathcal{H}_2/\delta \rho
\end{pmatrix},
\end{equation*}
where the two Hamiltonians $\mathcal{H}_1$ and $\mathcal{H}_2$ are 
\begin{align}\label{eq:energy}
\mathcal{H}_1(u,\rho)=\frac{1}{2}\int_{\mathbb{R}}(u_x^2+\rho^2)\di x,\quad \mathcal{H}_2(u,\rho)=\frac{1}{2}\int_{\mathbb{R}}u(u_x^2+\rho^2)\di x.
\end{align}
Here $\mathcal{H}_1$ is referred to as the total energy. 
The 2HS system is also the short-wave (or high-frequency) limit of the following two-component Camassa-Holm (2CH) system \cite{Olver,constantin2008integrable,chen2006two}:
\begin{equation}\label{eq:2CH}
\left\{
\begin{aligned}
&m_t+2mu_x+um_x- \rho\rho_x=0,\quad m=u-u_{xx},\\
&\rho_t+(\rho u)_x=0.
\end{aligned}
\right.
\end{equation}
The 2CH system first appeared in \cite[Eq. (43)]{Olver}. It was also derived from the Green-Naghdi equations describing water waves; see \cite{constantin2008integrable}. When the sign of the term $\rho\rho_x$ is changed, the system corresponds to the situation that the gravity acceleration points upwards  \cite{constantin2008integrable}, and it was first studied by Chen et al. in \cite{chen2006two}, where a reciprocal transformation between the 2CH system and the first negative flow of the AKNS hierarchy was established. Aratyn et al. showed that the modification of the Schr\"odinger spectral problem for the 2CH system leads to the 2HS system \eqref{eq:2HS}; see \cite{aratyn2006negative}.

When $\rho\equiv0$, the 2HS system \eqref{eq:2HS} becomes the Hunter-Saxton (HS) equation:
\begin{equation}\label{eq:HS}
u_t+uu_x=\frac{1}{2} D_{\kappa}^{-1} u_x^2.
\end{equation}
It was first proposed by Hunter and Saxton \cite{hunter1991dynamics} to study a nonlinear instability in the director field of a nematic liquid crystal, where $u(x,t)$ is related to the derivation of the average orientation of the molecules from an equilibrium position. Note that if we define $m=u_{xx}$, then the HS equation implies
\[
m_t+2mu_x+um_x=0,\quad m=u_{xx},
\]
which shares the same form with the Camassa-Holm (CH) equation (i.e., $\rho\equiv0$ in system \eqref{eq:2CH}):
\begin{align}\label{eq:CH}
m_t+2mu_x+um_x=0,\quad m=u-u_{xx}.
\end{align}
Hunter and Zheng proved that the Hunter-Saxton equation \eqref{eq:HS} is a completely integrable, bi-variational, bi-Hamiltonian system, and the corresponding equation for $m=u_{xx}$ belongs to the Harry Dym hierarchy; see \cite{hunter1994completely} for more details. 

If $(u,\rho)$ is a classical solution to \eqref{eq:2HS}, differentiating \eqref{eq:2HS}$_1$ with respect to the spatial variable $x$ yields
\begin{align*}
(u_{t}+uu_{x})_x=\frac{1}{2}(u_x^2+\rho^2).
\end{align*}
Combining the above equation with \eqref{eq:2HS}$_2$, we obtain the following energy conservation law:
\begin{align}\label{eq:conservationlaw}
(u_x^2+\rho^2)_t+[u(u_x^2+\rho^2)]_x=0.
\end{align}
According to the energy conservation equation \eqref{eq:conservationlaw}, it is nature to find the global-in-time conservative solution $(u,\rho)$ that satisfies $\|u_x(\cdot,t)\|_{L^2}^2+\|\rho(\cdot,t)\|_{L^2}^2=\|\bar{u}_x\|_{L^2}^2+\|\bar{\rho}\|_{L^2}^2$ for any $t>0$, provided that the initial datum $(u,\rho)|_{t=0}=(\bar{u},\bar{\rho})$  satisfies $\bar{u}_x,~\bar{\rho}\in L^2(\mathbb{R})$. However, the solution might blow up in finite time, and the solution $(u,\rho)$ will lose the information of energy conservation temporarily. To see this, consider the characteristics defined by  
$$
\frac{\partial}{\partial t}X(\xi,t)=u(X(\xi,t),t),
\quad X(\xi,0)=\xi\in\mathbb{R}.
$$
From \eqref{eq:2HS} and \eqref{eq:conservationlaw}, one can formally obtain the formula for initial data $(\bar{u},\bar{\rho})$
\begin{align}\label{eq:chara}
X(\xi,t)=\xi+\bar{u}(\xi)t+\frac{t^2}{4}D_\kappa^{-1} (\bar{u}_x^2+\bar{\rho}^2).
\end{align}
From the above characteristics, one can verify the following fact: if the initial datum $(\bar{u},\bar{\rho})$ satisfies $\bar{u}_x(x_0)=\inf_{x\in\mathbb{R}}\bar{u}_x(x)<0$ and $\bar{\rho}(x_0)=0$ for some $x_0\in\mathbb{R}$, then we have $u_x(\cdot,t)\to-\infty$ as $t\to -\frac{2}{\bar{u}_x(x_0)}$. 
By the characteristics, we can also construct the following example ($\kappa=0$):
\begin{example}\label{example}
Consider the solution with initial datum 
\begin{equation*}
\bar{u}(x)=
\begin{cases}
0, & x\leq0,\\[6pt]
-x, & 0<x\leq 1,\\[6pt]
-\dfrac{x+1}{2}, & 1<x\leq 3,\\[6pt]
-2, & x>3,
\end{cases}
\qquad 
\bar{\rho}(x)=
\begin{cases}
\frac{1}{2}, & x\in[1,3],\\
0, & \textrm{otherwise}.
\end{cases}
\end{equation*}
By the characteristic method ($\kappa=0$) for $t<2$, the solution $(u,\rho)$ is 
\begin{equation*}
u(x,t)=
\begin{cases}
0, & x\leq0,\\[8pt]
-\dfrac{2x}{2-t}, & 0<x\leq \left(1-\frac{t}{2}\right)^2,\\[10pt]
-\left(1-\frac{t}{2}\right)\dfrac{x+1}{1+(1-\frac{t}{2})^2}, & \left(1-\frac{t}{2}\right)^2\leq x\leq 2\left(1-\frac{t}{2}\right)^2+1,\\[10pt]
-2+t, & x\geq  2\left(1-\frac{t}{2}\right)^2+1,
\end{cases}
\end{equation*}
and
\begin{equation*}
\quad
\rho(x,t)=
\begin{cases}
\dfrac{1}{1+\left(1-\frac{t}{2}\right)^2}, & \left(1-\frac{t}{2}\right)^2\leq x\leq 2\left(1-\frac{t}{2}\right)^2+1,\\[12pt]
0,  & \textrm{otherwise.}
\end{cases}
\end{equation*}
Moreover, we have
\[
u_x(x,t)=
\begin{cases}
0, &  x\le 0,\\
-\dfrac{1}{1-\frac{t}{2}}, 
&  0<x<\left(1-\frac{t}{2}\right)^2,\\[12pt]
-\dfrac{1-\frac{t}{2}}{1+\left(1-\frac{t}{2}\right)^2}, 
& \left(1-\frac{t}{2}\right)^2<x<1+2\left(1-\frac{t}{2}\right)^2,\\[12pt]
0, & x\ge 1+2\left(1-\frac{t}{2}\right)^2.
\end{cases}
\]
As \(t\to 2^-\), the interval \(0<x<\left(1-\frac{t}{2}\right)^2\) collapses to \(x=0\), and \(u_x(x,t)\to -\infty\), so an incoming cusp forms at \((x,t)=(0,2)\). See Example \ref{example2} for $t>2$.
\end{example}
From this example, we have $u(x,2)\equiv 0$ and $u_x^2(\cdot,t)\to \delta$ in the sense of distributions as $t\to 2$, where $\delta$ is the Dirac delta mass at the origin $x=0$. Notice that $\|u_x(\cdot,t)\|^2_{L^2}+\|\rho(\cdot,t)\|_{L^2}^2= 2$ for any $t\neq 2$ and $\|u_x(\cdot,2)\|_{L^2}=0$, $\|\rho(\cdot,2)\|_{L^2}=1$. Therefore, if we only study the solution $(u,\rho)$, the energy $\|u_x(\cdot,t)\|^2_{L^2}+\|\rho(\cdot,t)\|_{L^2}^2$ is not conserved at $t=2$. It is worth noting that at the blow-up time $t=2$, part of the energy density is converted into the singular measure $\delta$. This phenomenon shares some similarities with the HS equation and the CH equation; see \cite{bressan2007global,holden2007global,gao2022regularity} for instance. 
After the blow-up time, one way to extend the solution globally leads to the global dissipative solutions, where part of the energy $\mathcal{H}_1$ given by \eqref{eq:energy} will dissipates. Another way gives the  conservative solutions, which makes the energy conserved for all time.  

The global strong solutions to the 2HS system \eqref{eq:2HS} in the periodic setting were obtained  by Wunsch in \cite{wunsch2009hunter} for initial data $(\bar{u},\bar{\rho})\in H^s(\mathbb{S})\times H^{s-1}(\mathbb{S})$ ($s>3$) with some smallness conditions. Later, Wunsch \cite{wunsch2010generalized} showed the existence of global dissipative solutions on the real line by the characteristics method used in \cite{bressan2005global} for the HS equation  \eqref{eq:HS}. Explicit solutions on the unit circle were obtained in \cite{wunsch2011weak,lenells2013spheres} by using the geometric interpretation of system \eqref{eq:2HS}. The existence of conservative solutions to 2HS system with $\kappa=1/2$ on the real line was proved in \cite{nordli2016lipschitz}, where a Lipschitz metric was constructed for the stability of the obtained solutions. Recently, Grunert and Nordli \cite{grunert2018existence} proved the Lipschitz stability for the $\lambda$-dissipative solutions to the 2HS system with $\kappa=1/2$ by a generalized method of characteristics. 
The ideas for the generalized method of characteristics come from a series of papers for the CH equation \eqref{eq:CH} (see \cite{bressan2005optimal,bressan2007global,holden2007global,holden2008global,holden2009dissipative}), and the HS equation \eqref{eq:HS} (see \cite{bressan2005global,bressan2007asymptotic,Bressan2010}), which were also applied to the 2CH system \eqref{eq:2CH}; see \cite{grunert2012global,grunert2015continuous}.We remark here that if we change the sign of the term $\rho\rho_x$ in the 2HS system, some existence results can also be found; see \cite{wunsch2009hunter,wunsch2010generalized,lenells2013hunter,guan2011global}.

The existence of both conservative and dissipative solutions to the 2HS system \eqref{eq:HS} has been established in the literature. However, the uniqueness and asymptotic behavior of weak solutions have remained open questions. These properties are precisely among the results obtained in the present paper.

\subsection{Methodology and main results} 

In this paper, we are going to study the regularity structure, existence, uniqueness, and the asymptotic behavior of the $\lambda$-dissipative solutions to the 2HS system \eqref{eq:2HS} by an explicit characteristics method. The following  generalized framework was studied (see \cite[Eqs. (1.4)-(1.6)]{gao2022regularity} for HS equation with $\kappa=0$):
\begin{align}
&u_t+uu_x=\frac{1}{2}\left[(1-\kappa)\int_{-\infty}^x-\kappa\int_{x}^{+\infty}\right]\di \mu(t),\label{eq:gHS1}\\
&\rho_t+(\rho u)_x=0,\label{eq:gHS4}\\
&\mu_t+(u\mu)_x=0,\label{eq:gHS2}\\
&\di \mu_{ac}(t)=(u_x^2+\rho^2)(x,t)\di x.\label{eq:gHS3}
\end{align}
Here, the energy measure $\mu(t)$ is a nonnegative Radon measure, and $\mu_{ac}(t)$ is the absolutely continuous part of $\mu(t)$ with respect to the Lebesgue measure $\di x$. Equation \eqref{eq:gHS2} is  Equation \eqref{eq:conservationlaw} with $(u_x^2+\rho^2)\di x$ replaced by $\di\mu$. Equation~\eqref{eq:gHS3} just says that the absolutely continuous part of $\mu$ corresponds to $u_x^2+\rho^2$ exactly, which is a compatibility condition between $\mu$ and $u_x^2+\rho^2$. 
In this way, the weak solution we found is the so-called conservative solutions.
Except for conservative solutions, the 2HS system also has a class of weak solutions that dissipates the energy.
It is also reasonable to wonder whether there are intermediate solutions between conservative and dissipative solutions that can dissipate the energy partially. This leads to the concept of $\lambda$-dissipative solutions, which was first introduced in \cite{grunert2015continuous}  in the context of the 2CH system, and then studied for HS equation \cite{grunert2022lipschitz} and 2HS system with $\kappa=1/2$ \cite{grunert2018existence}.

We first give a clear description of the  $\lambda$-dissipative solutions.
Since for the (fully) dissipative solutions, i.e., $\lambda=1$, all the singular parts of the energy measure are dissipated, we only need to study the absolutely continuous part $(u,\rho)$ of the energy measure. Hence we define the solution spaces $\mathcal{D}_{\lambda}$  separately for $0\leq\lambda<1$ (see also \cite[Definition 2.2, 2.3]{grunert2022lipschitz})
 and $\lambda=1$:
\begin{definition}[Solution spaces]\label{def:D}
The solutions are studied in the following spaces:
\begin{enumerate}
\item For $0\leq\lambda<1$, let $\mathcal{D}_{\lambda}$ be the set of all quadruples $(u,\rho, \mu,\nu)$ satisfying
\begin{enumerate}
\item[(i)] $u\in C_b(\mathbb{R})$, $u_x\in L^2(\mathbb{R}),~~\rho\in L^2(\mathbb{R})$;
\item[(ii)] $\mu,~\nu\in \mathcal{M}_+(\mathbb{R})$, $\mu\ll\nu$;
\item[(iii)] $\di \mu_{ac}=(u_x^2+\rho^2)\di x$,  where  $\mu_{ac}$ and $\mu_s$ are the absolutely continuous  part and singular part of measure $\mu$ with respect to the Lebesgue measure $\di x $;
\item[(iv)]  the Radon-Nikodym derivative $\frac{\di\mu}{\di\nu}(x)\in\{1,~1-\lambda\}$, and $\frac{\di\mu}{\di\nu}(x)=1$ for a.e. $x\in\mathbb{R}$ satisfying $u_x(x)<0$ or $\rho(x)\neq0$.
\end{enumerate}
Here, $C_b(\mathbb{R})$ is the space of continuous and bounded functions defined on $\mathbb{R}$ equipped with the sup-norm, and $\mathcal{M}_+(\mathbb{R})$ stands for the set of all finite nonnegative Radon measures endowed with the weak topology. 
\item For $\lambda=1$, let
\[
\mathcal{D}_1=\{(u,\rho):~~ u\in C_b(\mathbb{R}), ~~u_x,~\rho\in L^2(\mathbb{R})\}.
\]
\end{enumerate}

\end{definition}
In Definition \ref{def:D} for $0\leq\lambda<1$, the measure $\mu$ is used to describe the remaining energy of the $\lambda$-dissipative solution  in the evolution process of the system, and $\nu$ is the total energy measure associated with the transport induced by the velocity field $u$.  In (iv), for a.e. $x\in\mathbb{R}$, if $u_x(x)<0$ or $\rho(x)\neq0$, then at the position $x$, there has not been a blow-up yet, which means the energy at $x$ is not losing  and hence $\frac{\di\mu}{\di\nu}(x)=1$; see (vii) in Theorem \ref{thm:measure} for more details. To take the dissipation on $t=0$ into consideration, we impose one more condition for the initial data $(\bar{u},\bar{\rho},\bar{\mu},\bar{\nu})\in \mathcal{D}_{\lambda}$
\begin{equation}\label{eq:initialdecay}
\bar{\mu}=\bar{\nu}_{ac}+(1-\lambda)\bar{\nu}_s.
\end{equation}
In this paper, the dissipation before $t=0$ is ignored. If one wants to take the dissipation before  $t=0$ into consideration, one needs to add more precise descriptions for initial data, which is left for interested readers.

Next, we give the definition of the $\lambda$-dissipative solutions for $0\leq\lambda<1$.
\begin{definition}[$\lambda$-dissipative solutions]\label{def:weak}
For $0\leq\lambda<1$, let $(\bar{u},\bar{\rho},\bar{\mu},\bar{\nu})\in \mathcal{D}_{\lambda}$ satisfy \eqref{eq:initialdecay}. The quadruple
$(u(t),\rho(t),\mu(t),\nu(t))\in\mathcal{D}_{\lambda}$ is said to be a global-in-time $\lambda$-dissipative solution to the 2HS system with initial data $(\bar{u},\bar{\rho},\bar{\mu},\bar{\nu})$ if it satisfies the following conditions.
\begin{enumerate}
\item[(i)] We have
\[
\nu\in C([0,+\infty);\mathcal{M}_+(\mathbb{R})),\qquad
\nu(t)(\mathbb{R})=\bar\nu(\mathbb{R}),
\]
$u_x(\cdot,t)\in L^2(\mathbb{R})$ for every $t\in[0,\infty)$,
\[
u\in C([0,\infty);C_b(\mathbb{R}))\cap C^{1/2}_{loc}(\mathbb{R}\times[0,\infty)),
\qquad u_t\in L_{loc}^2(\mathbb{R}\times(0,\infty)),
\]
and
\[
\rho\in L^\infty(0,\infty;L^2(\mathbb{R}))\cap C_w([0,\infty);L^2(\mathbb{R})).
\]

\item[(ii)] The initial conditions are
\[
(u(\cdot,0),\rho(\cdot,0),\mu(0),\nu(0))
=(\bar{u},\bar{\rho},\bar{\mu},\bar{\nu}).
\]
Moreover,
\[
T_s:=\{t\geq0:\nu_s(t)\neq0\}
\]
is at most countable, and
\[
\di\mu(t)=(u_x^2+\rho^2)(x,t)\di x,
\qquad t\in[0,\infty)\setminus T_s.
\]

\item[(iii)] The equations
\begin{align}\label{eq:weakformula}
\int_{0}^\infty\int_{\mathbb{R}}u\phi_t-\phi\left(uu_x-F\right)\di x\di t
=-\int_{\mathbb{R}}\bar{u}\phi(x,0)\di x
\end{align}
and
\begin{align}\label{eq:weakformula1}
\int_{0}^\infty\int_{\mathbb{R}}\rho(\phi_t+u\phi_x)\di x\di t
=-\int_{\mathbb{R}}\bar{\rho}\phi(x,0)\di x
\end{align}
hold for every $\phi\in C_c^1(\mathbb{R}\times[0,\infty))$, where
\[
F(x,t):=\frac12\left[(1-\kappa)\int_{-\infty}^x-\kappa\int_x^{+\infty}\right]\di\mu(t).
\]

\item[(iv)] The total energy measure is transported by $u$, namely
\begin{equation}\label{eq:energyconservenu}
\int_{0}^\infty\int_{\mathbb{R}}(\phi_t+u\phi_x)\di\nu(t)\di t
+\int_{\mathbb{R}}\phi(x,0)\di\bar\nu=0
\end{equation}
for every $\phi\in C_c^1(\mathbb{R}\times[0,\infty))$. There exists a nonnegative Radon measure
\[
\mathfrak D\in\mathcal M_+(\mathbb{R}\times(0,\infty))
\]
such that
\begin{equation}\label{eq:energydissipation}
\int_{0}^\infty\int_{\mathbb{R}}(\phi_t+u\phi_x)\di\mu(t)\di t
+\int_{\mathbb{R}}\phi(x,0)\di\bar\mu
=\int_{\mathbb{R}\times(0,\infty)}\phi(x,t)\di\mathfrak D(x,t)
\end{equation}
for every $\phi\in C_c^1(\mathbb{R}\times[0,\infty))$. The measure $\mathfrak D=-[\mu_t+(u\mu)_x]$ is determined by the quadruple and is not additional initial data. In particular, for every nonnegative $\phi\in C_c^1(\mathbb{R}\times[0,\infty))$,
\begin{equation}\label{eq:fourth}
\int_{0}^\infty\int_{\mathbb{R}}(\phi_t+u\phi_x)\di\mu(t)\di t
+\int_{\mathbb{R}}\phi(x,0)\di\bar\mu\geq0.
\end{equation}
For $\lambda=0$, one has $\mathfrak D=0$.

\item[(v)] Equation \eqref{eq:gHS3} holds for every $t\geq0$. In addition, the following exact Eulerian dissipation rule holds for every $t>0$:
\[
\di(\nu-\mu)(t)
=\frac{\lambda}{1-\lambda}u_x^2(x,t)
\mathbf 1_{\{\rho(x,t)=0,\;u_x(x,t)\geq 2/t\}}\di x
+\lambda\di\nu_s(t).
\]
Thus the energy is unchanged on
$\{\rho\neq0\}\cup\{u_x<2/t\}$, while precisely the fraction $\lambda$ is removed from the energy that has already passed through wave breaking. For $\lambda=0$, the preceding identity is understood as $\mu(t)=\nu(t)$.

\item[(vi)] The map $t\mapsto\mu(t)$ is right-continuous in the weak-star topology and admits a weak-star left limit at every $t>0$. Moreover,
\begin{equation}\label{eq:decay}
\mu(t+)=\mu(t)=\mu_{ac}(t-)+(1-\lambda)\mu_s(t-)
=\mu_{ac}(t)+(1-\lambda)\nu_s(t),\qquad t>0.
\end{equation}
More precisely, $\mu(s)\overset{\ast}{\rightharpoonup}\mu(t)$ as $s\to t+$ and
$\mu(s)\overset{\ast}{\rightharpoonup}\mu(t-)$ as $s\to t-$.
Here $\mu_s(t)$ and $\nu_s(t)$ denote the singular parts of $\mu(t)$ and $\nu(t)$ with respect to the Lebesgue measure.
\end{enumerate}
\end{definition}

For $\lambda=1$, i.e., the  dissipative case, 
there is no need to study the energy measures $\nu$ and $\mu$. Hence one can use a simpler definition. Consider the initial data $(\bar{u},\bar{\rho})\in\mathcal{D}_1$.
The dissipative solutions are defined by
\begin{definition}[Dissipative solutions]\label{def:dissipative}
A binary pair $(u,\rho)$ is a weak solution of the 2HS system \eqref{eq:2HS} ($\kappa=0$) with initial data $(\bar{u},\bar{\rho})\in\mathcal{D}_1$ if $u(\cdot,t)$ is absolutely continuous on $\mathbb R$ for every $t\geq0$,
\[
u_x,\rho\in L^\infty(0,\infty;L^2(\mathbb R)),
\]
and \eqref{eq:2HS} holds on $\mathbb R\times(0,\infty)$ in the sense of distributions, with
$u(x,0)=\bar u(x)$ and $\rho(x,0)=\bar\rho(x)$ for a.e. $x\in\mathbb R$.
A weak solution $(u,\rho)$ is called dissipative if its derivative $\partial_xu$ is bounded from above on any compact subset of the upper half-plane $\mathbb{R}\times[0,\infty)$, and
\[
\partial_xu(\cdot,t)\to \bar{u}',~~\rho(\cdot,t)\to\bar{\rho},\quad \text{strongly in }~L^2(\mathbb{R}),\quad \text{ as }t\to0.
\]
\end{definition}
\begin{remark}\label{rmk:defweak}
\begin{enumerate}
\item The identities \eqref{eq:energyconservenu} and \eqref{eq:energydissipation} are equivalent to the distributional balance laws
\[
\nu_t+(u\nu)_x=0,
\qquad
\mu_t+(u\mu)_x=-\mathfrak D.
\]
The measure $\mathfrak D$ may have an absolutely continuous, a singular continuous, and an atomic component in time. The exact Eulerian rule in Definition~\ref{def:weak}(v), rather than an arbitrary choice of $\mathfrak D$, specifies which part of the transported energy has been dissipated.
\item The boundedness property for dissipative solution in Definition \ref{def:dissipative} allows for incoming cusps but rules out outgoing cusps. In contrast, for $0<\lambda<1$, the $\lambda$-dissipative solutions allow for both incoming and outgoing cusps, as illustrated in Examples \ref{example} and \ref{example2}. This causes the main difficulty for uniqueness for $0<\lambda<1$. 
\end{enumerate}
\end{remark}

For any initial data in $\mathcal{D}_{\lambda}$, we will use the characteristic methods to construct $\lambda$-dissipative solutions in the sense of Definition \ref{def:weak}. The regularity structure of the solutions will also be discussed. We have the following theorem for existence and regularity structure:
\begin{theorem}\label{thm:introduce}
We have the following results for existence and regularity structure:
\begin{enumerate}
\item For $0\leq\lambda<1$, let $(\bar{u},\bar{\rho},\bar{\mu},\bar{\nu})\in\mathcal{D}_{\lambda}$ satisfy \eqref{eq:initialdecay}. Then there exists a global-in-time $\lambda$-dissipative solution  $(u(t),\rho(t),\mu(t),\nu(t))\in\mathcal{D}_{\lambda}$ to the 2HS system  with  initial date $(\bar{u},\bar{\rho},\bar{\mu},\bar{\nu})$ in the sense of Definition \ref{def:weak}.  

Let $\mu(t)=\mu_{ac}(t)+\mu_{pp}(t)+\mu_{sc}(t)$, where $\mu_{ac}(t)$, $\mu_{pp}(t)$, and $\mu_{sc}(t)$ are the absolutely continuous part, pure point part and the singular continuous part of $\mu$ respectively.
The following statements hold:
\begin{enumerate}
\item [(i)] (Energy conservation) we have $\nu \in C([0,\infty); \mathcal{M}_+(\mathbb{R}))$ and
\begin{align}\label{eq:conserE}
\nu(t)(\mathbb{R})= \bar{\nu}(\mathbb{R}),\quad t>0. 
\end{align}
\item [(ii)] (Formation of singularities) For any $t\neq0$, $\mu_{pp}(t)$ and $\mu_{sc}(t)$ are determined by the absolutely continuous part $\bar{\nu}_{ac}$, namely determined by $\bar{u}_x$ and $\bar{\rho}$. More precisely, for any $t> 0$, we define
\begin{align}\label{eq:singulartE}
A_t^E =\left\{x:~~\bar{u}_x(x)=-\frac{2}{t},~~\bar{\rho}(x)=0\right\}.
\end{align}
If $\mathcal{L}(A_t^E)\neq 0$, then $\mu_{pp}(t)+\mu_{sc}(t)\neq 0$  (i.e., $\mu_{pp}(t)+\mu_{sc}(t)$ is not a zero measure).
All the intervals with positive length  in $A_t^E$ will generate the pure point part $\mu_{pp}(t)$, and the rest of $A_t^E$ will generate the singular continuous part $\mu_{sc}(t)$.

\item [(iii)] (Countably many singular times) There are at most countably many times $t\in[0,\infty)$, such that either the pure point part or the singular continuous part of $\mu(t)$ or $\nu(t)$ is not zero.

\item [(iv)] (Regularity) For every $t\in[0,\infty)$, the function $u(\cdot,t)$ is globally absolutely continuous, $\rho(\cdot,t)\in L^2(\mathbb R)$, and
\begin{align}\label{eq:acpartE}
\di\mu_{ac}(t)=(u_x^2+\rho^2)(x,t)\di x.
\end{align}
Furthermore,
\begin{align}\label{eq:propertiesuE}
u\in C([0,\infty);C_b(\mathbb{R}))\cap C^{1/2}_{loc}(\mathbb{R}\times [0,\infty)),~~ u_x\in L^\infty(0,\infty;L^2(\mathbb{R})),~~ u_t\in L_{loc}^2(\mathbb{R}\times [0,\infty))
\end{align}
and
\[
\rho\in L^\infty(0,\infty;L^2(\mathbb{R}))\cap C_w([0,\infty);L^2(\mathbb{R})),\quad 	\|\rho(\cdot,t)\|_{L^2}^2\leq\bar{\mu}(\mathbb{R}).
\]

\item [(v)] The exact Eulerian dissipation rule in Definition~\ref{def:weak}(v) holds. In particular,
$\frac{\di\mu(t)}{\di\nu(t)}(x)\in\{1,1-\lambda\}$, and
$\frac{\di\mu(t)}{\di\nu(t)}(x)=1$ for a.e. $x\in\mathbb R$ satisfying $u_x(x,t)<0$ or $\rho(x,t)\neq0$.

\end{enumerate}

\item
For $\lambda=1$, let initial data $(\bar{u},\bar{\rho})\in\mathcal{D}_1$,
then there exists a dissipative solution $(u,\rho)$ to the 2HS system in the sense of Definition \ref{def:dissipative}, and for $(u,\rho)$ similar regularity results hold.
\end{enumerate}
\end{theorem}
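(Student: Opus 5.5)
The plan is to construct the solution in Lagrangian coordinates by an explicit generalized characteristic, in the spirit of \eqref{eq:chara}, using the total energy measure $\bar\nu$ as the Lagrangian parameter. For $0\le\lambda<1$ define $\bar G(\xi)=\bar\nu((-\infty,\xi))$ together with the relabelling $\xi+\bar G(\xi)$, which absorbs the atoms and singular parts of $\bar\nu$. For each label set
\[
X(\xi,t)=\xi+\bar u(\xi)t+\frac{t^2}{4}\Big[(1-\kappa)\bar\mu((-\infty,\xi))-\kappa\bar\mu((\xi,\infty))\Big]
\]
as long as the label has not broken, that is, as long as $t<t^*(\xi)=-2/\bar u_x(\xi)$ on $\{\bar\rho=0,\bar u_x<0\}$. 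At $t^*(\xi)$ the energy carried by that label is multiplied by $1-\lambda$, and after $t^*$ the characteristic is continued as a quadratic polynomial in $t$ with the reduced forcing. Because the reduced forcing is explicit, $X$ remains piecewise quadratic in $t$ and is nondecreasing in $\xi$. This is the key structural fact: the map $\xi\mapsto X(\xi,t)$ is monotone, so it can be inverted in the generalized sense.

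With $X$ in hand, the Eulerian quantities are obtained as follows. Define $u(X(\xi,t),t)=\partial_tX(\xi,t)$. Take $\nu(t)=X(\cdot,t)_\#\bar\nu$ and $\mu(t)=X(\cdot,t)_\#\bar\mu^{(t)}$, where $\bar\mu^{(t)}$ is $\bar\nu$ with density $1$ before breaking and density $1-\lambda$ after. Let $\rho(t)$ be the push-forward density of $\bar\rho$, namely $\rho(X,t)X_\xi=\bar\rho$.

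The listed properties then follow in order.
\begin{itemize}
\item[(i)] Energy conservation is immediate from the push-forward, since $\nu(t)(\mathbb R)=\bar\nu(\mathbb R)$. Weak continuity of $\nu$ follows from continuity of $X$ in $t$ together with dominated convergence.
\item[(ii)] Singularities form exactly where $X_\xi(\cdot,t)=0$ on a set of positive $\bar\nu_{ac}$-measure. Computing
\[
X_\xi=\Big(1+\tfrac t2\bar u_x\Big)^2+\tfrac{t^2}{4}\bar\rho^2
\]
on the unbroken set shows that this happens precisely on $A^E_t$. An interval in $A^E_t$ is collapsed to a single point, which gives an atom of $\nu(t)$ and hence $\mu_{pp}(t)$. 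A Lebesgue-positive set containing no interval has an image of measure zero carrying no atoms, which gives $\mu_{sc}(t)$.
\item[(iii)] The sets $A^E_t$ are disjoint for distinct $t$, so only countably many of them have positive measure.
\item[(iv)] Absolute continuity of $u(\cdot,t)$, the identity $u_x=\partial_\xi\partial_tX/X_\xi$, and \eqref{eq:acpartE} follow from the area formula. The $C^{1/2}$ bound comes from Cauchy--Schwarz,
\[
|u(x)-u(y)|\le \|u_x\|_{L^2}|x-y|^{1/2},
\]
combined with Lipschitz continuity in $t$ along characteristics. The bound on $u_t$ uses $u_t=F-uu_x$. The estimate $\|\rho\|_{L^2}^2\le\bar\mu(\mathbb R)$ is obtained from $\rho^2X_\xi\le\bar\rho^2/X_\xi\cdot X_\xi$, together with $X_\xi\ge t^2\bar\rho^2/4$ where $\bar\rho\neq0$.
\item[(v)] The Eulerian dissipation rule is obtained by translating "label already broken" into Eulerian terms. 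After breaking, $\bar\rho=0$ and $\bar u_x=-2/t^*$, and the explicit derivative of the quadratic continuation gives $u_x\ge 2/t$ on broken labels, whereas $u_x<2/t$ on unbroken labels with $\rho=0$. This yields the factor $\lambda/(1-\lambda)$ on the absolutely continuous part and the factor $\lambda$ on $\nu_s$.
\end{itemize}
The weak formulations \eqref{eq:weakformula}--\eqref{eq:fourth} are verified by changing variables $x=X(\xi,t)$ in the test integrals. Here $\mathfrak D\ge0$ is the push-forward of $\lambda\,\bar\nu$ restricted to the broken labels, placed at the breaking times. For $\lambda=1$ the same construction is carried out with the mass removed completely at breaking. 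The upper bound on $u_x$ holds because $u_x=0$ on broken labels and $u_x\le 2/t$ elsewhere, and the strong $L^2$ convergence as $t\to0$ comes from dominated convergence in $\xi$.

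The main obstacle I expect is the consistency of the forcing after breaking. The right-hand side $F$ depends on $\mu(t)$, which depends on which labels have already broken, and this in turn depends on $X$. This dependence has to be shown to be non-circular, with $X$ still monotone and still satisfying the ODE $\ddot X=\frac12\mu$-type identity. The first thing to try is to exploit the fact that, because of the Hunter--Saxton structure, $\mu(t)((-\infty,X(\xi,t)))$ changes only through labels that break. One would then write
\[
\partial_t^2X(\xi,t)=\tfrac12\Big[(1-\kappa)\bar\mu^{(t)}((-\infty,\xi))-\kappa\,\bar\mu^{(t)}((\xi,\infty))\Big],
\]
which is piecewise constant in $t$ and can therefore be integrated explicitly. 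Monotonicity would follow from a comparison of neighbouring labels.
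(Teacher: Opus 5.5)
Your construction is the paper's: explicit characteristics in the $\bar\nu$-relabelled variable, breaking times read off the data, push-forwards for $\nu,\mu,\rho$, Lemma~\ref{lmm:A2} for the singular decomposition, the phase identification for (v), and $\mathfrak D$ placed at breaking times (Theorems~\ref{thm:measure} and~\ref{thm:mainexistence}).

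The circularity you flag does not arise. The $\alpha$-derivative of the forcing at time $s$ is $\frac12f(\alpha)[1-\lambda\mathbf{1}_{J_s}(\alpha)]$, which involves only the label itself. So $J_t$ is fixed by the data and the acceleration in \eqref{eq:lambda_dissipation} is prescribed in advance. Differentiating gives \eqref{eq:separation}. On $J_t$ the dissipative correction to $y_\alpha$ is exactly $\lambda$ times the conservative $y_{1\alpha}$ of \eqref{eq:separation1}, so $y_\alpha=(1-\lambda)y_{1\alpha}\geq0$ and no comparison argument is needed.

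Parts of your set-up are wrong, though:
\begin{itemize}
\item The characteristic is not the conservative quadratic up to the label's own breaking time, since other labels break earlier and change its acceleration.
\item The forcing is monotone in $t$ but not piecewise constant, because the breaking times $-2/\bar u_x$ fill a continuum. So $X$ is not piecewise quadratic; one simply integrates twice.
\item You must work in $\alpha$ with the factors $\bar x'$, not in $\xi$. In $\xi$ the initial singular part of $\bar\nu$ sits on a Lebesgue-null set of labels (one label per atom), yet for $t>0$ it fills intervals with $y_\alpha=(1-\lambda)t^2/4$ and $u_x=2/t$. Your argument for (v) only treats labels broken at positive times. These $A_0^L$ labels are why the outgoing region is $\{u_x\geq 2/t\}$ rather than $\{u_x>2/t\}$.
\item Condition (vi) of Definition~\ref{def:weak} is not addressed; the paper obtains it from $J_t=J_{t-}\mathbin{\dot\cup}A_t^L$.
\end{itemize}

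Two estimates fail as written.

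\textbf{The bound on $\rho$.} Your argument asserts $\rho^2X_\xi\leq\bar\rho^2$, i.e.\ $X_\xi\geq1$ where $\bar\rho\neq0$, which is false near breaking. It would give $\|\rho(\cdot,t)\|_{L^2}\leq\|\bar\rho\|_{L^2}$, contradicted by Example~\ref{example} at $t=2$, where $\|\rho\|_{L^2}^2=1$ but $\|\bar\rho\|_{L^2}^2=1/2$. The lower bound $X_\xi\geq t^2\bar\rho^2/4$ only yields the constant pointwise bound $\rho^2X_\xi\leq 4/t^2$, which cannot be integrated over $\{\bar\rho\neq0\}$ to give $\bar\mu(\mathbb R)$. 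The correct argument is $\|\rho(\cdot,t)\|_{L^2}^2\leq\mu_{ac}(t)(\mathbb R)\leq\int f(1-\lambda\mathbf{1}_{J_t})\di\alpha\leq\bar\mu(\mathbb R)$, using \eqref{eq:acpartE} and $A_0^L\subset J_t$.

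\textbf{The one-sided bound for $\lambda=1$.} The claim $u_x\leq 2/t$ on unbroken labels is false when $\rho\neq0$. With $w_0=\bar u_x$, $r_0=\bar\rho$ and $D=(1+\frac t2w_0)^2+\frac{t^2}{4}r_0^2$,
\[
\frac2t-u_x=\frac2t\cdot\frac{1+\frac t2w_0}{D},
\]
which is negative as soon as $t>-2/w_0$. The sharp bound at fixed $t$ is $u_x\leq\frac2t+\frac{2}{t^2|r_0|}$, and this is not uniform if $\bar\rho$ takes small nonzero values where $\bar u_x<0$.

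For instance, take $\bar u_x=-2$ and $\bar\rho(x)=x$ on $[0,1]$. Then $u_x(X(\xi,t),t)\gtrsim (t-1)^{-1}$ for $\xi\in[t-1,3(t-1)]$ as $t\to1^+$, a set of positive measure on which $X_\xi>0$. So the solution \eqref{eq:solution22} violates the bound required by Definition~\ref{def:dissipative} on a compact subset of $\mathbb R\times(0,\infty)$.

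The paper states part (2) only by analogy and never checks this bound. Here neither your argument nor the paper's sketch is complete; an extra hypothesis on the data or a modified definition is needed.
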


In this paper, we employ the method of characteristics to establish uniqueness of $\lambda$-dissipative solutions to the 2HS system for any $\lambda\in[0,1]$. Our main goal is to show that, for any fixed initial datum in $\mathcal{D}_{\lambda}$ satisfying \eqref{eq:initialdecay}, all $\lambda$-dissipative solutions in the sense of Definition~\ref{def:weak} (or Definition~\ref{def:dissipative} when $\lambda=1$) coincide with the one constructed via the explicit characteristic equations. Uniqueness then follows immediately.

This characteristic-based approach has been successfully applied in related settings: Dafermos~\cite{dafermos2011generalized} used it to establish uniqueness of dissipative solutions to the HS equation; Bressan, Chen, and Zhang~\cite{bressan2015unique} proved uniqueness of conservative solutions to the Camassa--Holm equation; and the present authors employed the same method in~\cite{gao2022regularity} to obtain uniqueness of conservative solutions to the HS equation. Further applications to other models can be found in, e.g.,~\cite{bressan2017lipschitz,bressan2016uniqueness,chen2018existence}.

The main difficulties in the present work arise from the coupling between $u$ and $\rho$, as well as the presence of outgoing cusps when $0<\lambda<1$. The proof differs according to the three parameter regimes: $\lambda=0$, $0<\lambda<1$, and $\lambda=1$.

For $\lambda=0$, the method of \cite{gao2022regularity} can be applied directly; see Theorem~\ref{thm:uniqueness1}.

For $\lambda=1$, the proof closely follows the approach of \cite{dafermos2011generalized}. The main difference---and the principal difficulty---lies in analyzing the singularities of $u_x$ along characteristics. Unlike the Riccati equation for the HS equation (cf.~\cite[Eq.~(4.21)]{dafermos2011generalized}), here we must derive an ODE system governing the coupled evolution of $u_x$ and $\rho$ along characteristics; this is carried out in Lemma~\ref{lmm:ODEs}. The coupling between the two quantities is resolved by reducing the system to a single complex-valued Riccati equation, whose explicit solution formula is presented in Proposition~\ref{eq:Jt_tildeAt}.

For $0<\lambda<1$, $\lambda$-dissipative solutions admit both incoming and outgoing cusps. The decisive point is therefore to distinguish, in Eulerian variables, an outgoing branch created by wave breaking from a regular expanding branch. The intrinsic phase condition
\[
\rho(x,t)=0,
\qquad
u_x(x,t)\geq\frac2t
\]
identifies precisely the outgoing part. The exact Eulerian dissipation rule in Definition~\ref{def:weak}(v) then determines $\nu-\mu$. Proposition~\ref{pro:B} converts this rule into the Lagrangian energy representation, and Theorem~\ref{thm:uniqueness3} yields uniqueness.

Consequently, the uniqueness proof depends on the value of $\lambda$. We summarize the results as follows.
\begin{theorem}\label{thm:uniqueness}
The following uniqueness results hold.
\begin{enumerate}
\item For $0\leq\lambda<1$, let
$(\bar u,\bar\rho,\bar\mu,\bar\nu)\in\mathcal D_\lambda$
satisfy \eqref{eq:initialdecay}. Then the global-in-time
$\lambda$-dissipative solution in the sense of Definition~\ref{def:weak} is unique and is given by \eqref{eq:lambda_dissipation}--\eqref{eq:solution2}.
\item For $\lambda=1$ and initial data
$(\bar u,\bar\rho)\in\mathcal D_1$, the dissipative solution in the sense of Definition~\ref{def:dissipative} is unique.
\end{enumerate}
\end{theorem}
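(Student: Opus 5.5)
The plan follows the Dafermos / Bressan--Chen--Zhang strategy. Given an arbitrary solution in the sense of Definition~\ref{def:weak} (or Definition~\ref{def:dissipative} when $\lambda=1$), I will show that its characteristics coincide with the explicit ones built in the existence part. Then $u$, $\rho$, $\mu$ and $\nu$ are forced to equal the constructed solution \eqref{eq:lambda_dissipation}--\eqref{eq:solution2}.

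\textbf{Step 1: Lagrangian labels and characteristics.} Using the conserved transported measure $\nu$ from Definition~\ref{def:weak}(i),(iv), define the label
\[
\beta(x,t)=\nu(t)((-\infty,x)).
\]
For each $\beta$, I will seek a characteristic $x(t;\beta)$ as a Filippov solution of $\dot x=u(x,t)$. The balance law $\nu_t+(u\nu)_x=0$ shows that $\nu(t)((-\infty,x(t)))$ stays constant along such a curve. Weak formula \eqref{eq:weakformula} then gives the second-order identity
\[
\frac{\di}{\di t}u(x(t),t)=F(x(t),t),
\]
with $F=\frac12\bigl[(1-\kappa)\int_{-\infty}^x-\kappa\int_x^{\infty}\bigr]\di\mu(t)$. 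The point is that $F$ along the characteristic depends only on how much $\mu$-mass lies to the left of $x(t)$. For $\lambda=0$ this reduces to $\mu=\nu$, so $F$ is constant in $\beta$ and the curves are explicit quadratics in $t$; this is the route of \cite{gao2022regularity}. Uniqueness of the forward characteristic through each label follows from this explicit form together with monotonicity in $\beta$.

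\textbf{Step 2 ($0<\lambda<1$).} I will express $\mu(t)((-\infty,x(t;\beta)))$ in Lagrangian terms. The exact Eulerian dissipation rule in Definition~\ref{def:weak}(v) identifies $\nu-\mu$ at time $t$ as a $\lambda$-fraction of the $\nu$-mass in two sets: the set where $\rho=0$ and $u_x\ge2/t$, and the singular part of $\nu$. Proposition~\ref{pro:B} turns this into a statement about initial labels. A label $\xi$ has lost the fraction $\lambda$ exactly when $\bar\rho(\xi)=0$, $\bar u_x(\xi)<0$ and $t\ge-2/\bar u_x(\xi)$, or when it lies in the initial singular support (already accounted for by \eqref{eq:initialdecay}). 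Hence $F(x(t;\beta),t)$ is an explicit function of $t$ and of the initial data. It is piecewise continuous in $t$, with jumps only at breaking times, and these are controlled by Theorem~\ref{thm:introduce}(iii). Integrating twice shows that $x(t;\beta)$ equals the constructed characteristic. Pushing $\nu$ and $\mu$ forward through this map, and recovering $u$ by integrating $\dot x$ and $\rho$ via the continuity equation \eqref{eq:weakformula1} along characteristics, completes the identification; this is Theorem~\ref{thm:uniqueness3}.

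\textbf{Step 3 ($\lambda=1$).} Here there is no $\nu$, so I follow \cite{dafermos2011generalized} directly. The upper bound on $u_x$ excludes outgoing cusps, so forward characteristics are unique. Along them, Lemma~\ref{lmm:ODEs} gives the coupled ODE system for $(u_x,\rho)$. Proposition~\ref{eq:Jt_tildeAt} solves this system as a complex Riccati equation, which shows that $u_x^2+\rho^2$ along a characteristic coincides with the explicit formula until blow-up, at which time the energy is removed. The same argument then pins down $F$ and hence the characteristic.

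\textbf{Main obstacle.} I expect the hardest step to be Step 2. One must justify that the intrinsic phase condition $\{\rho=0,\ u_x\ge 2/t\}$, evaluated on an \emph{arbitrary} solution, selects exactly the labels that have passed through breaking. Near outgoing cusps, an expanding regular region could in principle mimic this condition. I would first attack this by proving the a priori bound $u_x\le 2/t$ along every characteristic that has not broken. To do so I would compare, via Lemma~\ref{lmm:ODEs}, with the Riccati solution $u_x=2/(t-t_0)$, so that equality-or-exceedance of $2/t$ with $\rho=0$ forces a prior breaking time $t_0>0$.
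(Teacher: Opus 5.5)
Your overall architecture matches the paper's: canonical characteristics, the Eulerian rule converted into $\tilde\mu(t)=\tilde y(\cdot,t)\#[f(1-\lambda\mathbf 1_{J_t})\di\alpha]$, the explicit acceleration integrated twice, and Dafermos' argument with the complex Riccati reduction for $\lambda=1$. However, Step~1, on which everything else rests, contains a false claim and omits the key construction.

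\textbf{The characteristics in Step~1.} Lemma~\ref{lmm:ODEs} and Propositions~\ref{eq:Jt_tildeAt} and~\ref{pro:B}, which you invoke, are statements about the specific flow $\tilde y$ of Theorem~\ref{thm:uniqueness1}, so you must produce that flow.
\begin{itemize}
\item The balance law does \emph{not} make $\nu(t)((-\infty,x(t)))$ constant along an arbitrary solution of $\dot x=u(x,t)$. Forward characteristics are non-unique at outgoing cusps, which occur for every $\lambda<1$, including $\lambda=0$. In Examples~\ref{example} and~\ref{example2} with $\lambda=0$, take any $c,c'\in[0,1/4]$. The curve equal to $c'(2-t)^2$ for $t<2$ and to $c(t-2)^2$ for $t>2$ is a $C^1$ solution of $\dot x=u$, yet the $\nu$-mass to its left is $4c'$ before $t=2$ and $4c$ after.
\item Lemma~\ref{lmm:dafermos} holds along every such curve, but $F$ along it is not fixed by any label. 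So deducing an explicit quadratic, and from it uniqueness, is circular.
\item $\nu(t)((-\infty,x))$ is constant on energy-free intervals (e.g.\ $x\le0$ in Example~\ref{example}), so it cannot serve as a label.
\end{itemize}
What is missing is the selection mechanism. The paper uses the strictly increasing augmented variable $x+\tilde\nu(t)((-\infty,x))$ and its $1$-Lipschitz generalized inverse $x_1(\beta,t)$. It then solves $\dot\theta=\tilde u(x_1(\theta,t),t)$ in label space, whose right-hand side is $\frac12$-Lipschitz in $\theta$ by \eqref{eq:vxbeta} (via $|\tilde u_x|\le\frac12(1+\tilde u_x^2)$ and $\tilde\mu\le\tilde\nu$). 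The finite-propagation Lemma~\ref{lmm:keylemma1}, applied at absolutely continuous times, then shows that $\tilde y=x_1(\theta,t)$ solves $\partial_t\tilde y=\tilde u(\tilde y,t)$. It also gives the quantile relation \eqref{eq:energyconserved} at every time, i.e.\ $\tilde\nu(t)=\tilde y(\cdot,t)\#(f\di\alpha)$ through the cusps.

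\textbf{The main obstacle.} Your plan covers only one inclusion. Showing that an unbroken label has $\rho_\alpha(t)\neq0$ or $w_\alpha(t)<2/t$ gives $\tilde y(\alpha,t)\notin\Omega_t$ for $\alpha\notin J_t$. The inequality must be strict, since $\Omega_t$ contains the equality case. This alone yields only $\tilde\nu(t)-\tilde\mu(t)\le\lambda\tilde y(\cdot,t)\#[f\mathbf 1_{J_t}\di\alpha]$.

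For equality you also need that every broken label with $q(\alpha,t)=\tilde y_\alpha(\alpha,t)>0$ lands in $\Omega_t$. The paper gets this in two steps:
\begin{itemize}
\item $r(\alpha)=0$ for broken labels, so $\rho_\alpha\equiv0$ afterwards.
\item $q_{tt}=q_t^2/(2q)$ on the positivity component $(a,b)\ni t$, so $\sqrt q$ is affine with $\sqrt q(a)=0$, hence $w_\alpha(t)=2/(t-a)\ge2/t$.
\end{itemize}
Here $a=0$ for $\alpha\in A_0^L$, so your ``prior breaking time $t_0>0$'' must allow $t_0=0$.

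Finally, $t\mapsto\int_{-\infty}^\alpha f(\eta)\mathbf 1_{J_t}(\eta)\di\eta$ is only monotone, with possibly accumulating jumps, not piecewise continuous. Monotonicity is what justifies integrating twice.
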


To study the asymptotic behavior of the $\lambda$-dissipative solutions, the main difficulty is that we have to keep track of all the possible blow-ups and the concentrated energies. 
By incorporating the scaling properties of the equation into the constructed explicit characteristics, we find out that the rescaled limit is determined by the remaining energy after accounting for all possible blow-ups in the system which can be calculated through the initial data and the dissipation parameter $\lambda$. 
Then, the large-time asymptotic expansions in spaces $L^{\infty}(\mathbb{R})$  and ${\dot{H}}^1(\mathbb{R})$ can be obtained rigorously; especially leading order term is give by a special self-similar solution called  kink-wave.
The main theorem for large-time behavior is:
\begin{theorem}\label{thm:mainasym}
We have the following results for the large-time behavior of solutions:
\begin{enumerate}
\item
For $0\leq\lambda<1$, let $(u(t),\rho(t),\mu(t),\nu(t))\in\mathcal{D}_{\lambda}$ be a $\lambda$-dissipative solution to the 2HS system subject to the initial data $(\bar{u},\bar{\rho},\bar{\mu},\bar{\nu})\in\mathcal{D}_{\lambda}$ satisfying \eqref{eq:initialdecay}, in the sense of Definition~\ref{def:weak}.
Let
\begin{equation}\label{eq:finalenergy} 
E_\lambda:=\bar{\nu}(\mathbb{R}) -  \lambda\int_{\mathbb{R}}f(\eta) \mathbf{1}_{J}(\eta)\di \eta\geq 0,
\end{equation}
where the density function $f$ and the set $J$ are determined by initial data only; specifically,
$f(\alpha):= 1-\bar{x}'(\alpha)$ with $\bar{x}(\alpha)$ is defined by \eqref{eq:barx1} and $J$ is defined by \eqref{eq:blowupsets1} below.
Then we have
\begin{enumerate}
\item[(i)] Rescaled limit: the following rescaled limit of $u$ exists and is the kink-wave determined by the initial data, the parameter $\lambda$ and $\kappa$:
\begin{equation} \label{eq:v}
v(x):=\lim\limits_{t\to+\infty} \frac{2}{t}{u\left(\frac{t^2}{4}x,t\right)} = 
\left\{
\begin{aligned}
&-\kappa E_\lambda, \quad x< -\kappa E_\lambda,\\
&x, \quad -\kappa E_\lambda\le x\le (1-\kappa)E_\lambda,\\
&(1-\kappa)E_\lambda, \quad x >(1-\kappa)E_\lambda.
\end{aligned}
\right.
\end{equation}
\item[(ii)] Asymptotic expansions: we have the following asymptotic expansions in terms  of the kink wave: 
\begin{align}\label{eq:Linfty}
u(x,t) = \frac{t}{2} v\left(\frac{4x}{t^2}\right)+o(t) ~\textrm{in}~ {L^\infty}(\mathbb{R}),\quad\mbox{as }t\to+\infty,
\end{align}
and
\begin{align}\label{eq:L2}
u_x(x,t) = \partial_x\left[\frac{t}{2} v\left(\frac{4x}{t^2}\right)\right]  + o(1)~\textrm{in}~ {L^2}(\mathbb{R}),\quad\mbox{as }t\to+\infty.
\end{align}
For $\rho$, we have
\begin{align}\label{eq:L2rho}
\rho(x,t)=o(1)~\textrm{in}~L^2(\mathbb R),
\qquad\mbox{as }t\to+\infty.
\end{align}
As a direct consequence, we also have
\begin{align}\label{eq:singular part}
\lim_{t\to+\infty} {\mu}_{s}(t)(\mathbb{R})=\lim_{t\to+\infty} \|\rho(\cdot,t)\|_{L^2(\mathbb{R})}=0,
\end{align}
where ${\mu}_{s}(t)$ is the singular part of the energy measure ${\mu}(t)$ with respect to the Lebesgue measure.
\end{enumerate}

\item
For $\lambda=1$, let $(u,\rho)$ be a dissipative solution to the 2HS system in the sense of Definition \ref{def:dissipative} with initial data $(\bar{u},\bar{\rho})\in\mathcal{D}_1$,
then for $(u,\rho)$, the asymptotic behavior similar to \eqref{eq:v}-\eqref{eq:singular part} hold with $E_\lambda$ replaced by 
\[
E_1 =  \int_{\mathbb{R}\setminus J} (\bar{u}_x^2+\bar{\rho}^2)(\eta)\di \eta,\quad J:=  \left\{\xi\in \mathbb{R}:\bar{u}'(\xi)<0,~~ \bar{\rho}(\xi)=0\right\}. 
\]
\end{enumerate}
\end{theorem}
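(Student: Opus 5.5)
By Theorem~\ref{thm:uniqueness}, every $\lambda$-dissipative solution coincides with the one built from the explicit characteristics \eqref{eq:lambda_dissipation}--\eqref{eq:solution2}. The plan is therefore to work entirely in Lagrangian variables. In the energy coordinate $\alpha$ we write
\[
X(\alpha,t)=\bar{x}(\alpha)+\bar{u}(\bar{x}(\alpha))\,t+\frac{t^2}{4}\Big(\text{remaining energy to the left of }\alpha-\kappa\,E(t)\Big),
\]
with $u(X(\alpha,t),t)=\partial_tX(\alpha,t)$. The remaining energy is obtained from the density $f(\alpha)=1-\bar{x}'(\alpha)$, reduced by the factor $(1-\lambda)$ once $\alpha$ has passed through wave breaking. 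The set $J$ from \eqref{eq:blowupsets1} consists of the labels with $\bar u_x<0$ and $\bar\rho=0$, which break at time $t_\alpha=-2/\bar u_x$; labels with $\rho\neq0$ never break. We also include the initial singular part, already reduced by \eqref{eq:initialdecay}. It follows that for every $\alpha$, once $t>t_\alpha$, the cumulative remaining energy $G(\alpha,t)$ to the left of $\alpha$ is independent of $t$. Let $G_\infty(\alpha)$ denote its limit, so that $G_\infty(+\infty)=E_\lambda$. The only delicate labels are those with $t_\alpha$ arbitrarily large, i.e.\ $\bar u_x\to0^-$. These carry the $\lambda f\mathbf 1_J$ loss with an integrable density, so monotone convergence gives $G(\alpha,t)\to G_\infty(\alpha)$, uniformly in $\alpha$ by monotonicity and the continuity of the limit.

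\emph{Step 1 (rescaled limit).} Divide by $t^2/4$:
\[
\frac{4}{t^2}X(\alpha,t)=G_\infty(\alpha)-\kappa E_\lambda+o(1)
\]
uniformly, because $\bar x$ is bounded on compacts and $\bar u$ is bounded. Likewise
\[
\frac{2}{t}u(X(\alpha,t),t)=G(\alpha,t)-\kappa E_\lambda+o(1).
\]
Hence the rescaled profile $w(y,t)=\frac2t u(\frac{t^2}{4}y,t)$ satisfies $w(y,t)=y+o(1)$ uniformly on the image of the characteristics, that is, for $y\in[-\kappa E_\lambda,(1-\kappa)E_\lambda]$. Outside this interval $u$ is asymptotically constant, since no energy lies there, which yields the saturated values in \eqref{eq:v}. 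One point needs checking: on the image, $X(\cdot,t)$ is monotone and surjective up to $o(t^2)$, so no gap can create an error. Uniformity in $y$ then gives \eqref{eq:Linfty}.

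\emph{Step 2 ($\dot H^1$ and $\rho$).} By \eqref{eq:acpartE}, $\|u_x\|_{L^2}^2+\|\rho\|_{L^2}^2=\mu_{ac}(t)(\mathbb R)\le E(t)\to E_\lambda$. The kink has $\|\partial_x(\frac t2 v(4x/t^2))\|_{L^2}^2=\int (2/t)^2\,\mathrm dx$ over an interval of length $t^2E_\lambda/4$, which equals exactly $E_\lambda$. Weak convergence of $u_x-\partial_x[\frac t2v]$ to $0$ in $L^2$ follows from Step 1 by testing against functions in rescaled variables, or more directly from the $L^\infty$ convergence together with the $L^2$ bounds. Norm convergence ($\limsup\|u_x\|^2\le E_\lambda=\|\text{kink}\|^2$) then upgrades this to strong convergence: expanding $\|u_x-k\|^2$ with $\|u_x\|^2+\|\rho\|^2\le E_\lambda+o(1)$ forces $\|u_x-k\|_{L^2}\to0$ and $\|\rho\|_{L^2}\to0$ at the same time. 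This proves \eqref{eq:L2} and \eqref{eq:L2rho}. Since $\mu(t)(\mathbb R)\to E_\lambda$ and $\mu_{ac}(t)(\mathbb R)\ge\|u_x\|^2\to E_\lambda$, we obtain $\mu_s(t)(\mathbb R)\to0$, which is \eqref{eq:singular part}.

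The main obstacle is the cross term in Step 2. Proving $\int u_x\,k\,\mathrm dx\to E_\lambda$ is a weak-convergence argument in rescaled variables: $\int u_x\,k\,\mathrm dx=\frac2t\,[u]$ across the interval, and by Step 1 this equals the rescaled jump of $u$, which tends to $E_\lambda$. Since this reduces to the $L^\infty$ statement, the real difficulty is the uniform limit $G(\alpha,t)\to G_\infty$ over late-breaking labels. For $\lambda=1$ the same argument applies with all of $J$ removed at breaking, which gives $E_1$.
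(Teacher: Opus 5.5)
Your Step 2 is the paper's argument in compressed form. With $a=-\kappa E_\lambda$, $b=(1-\kappa)E_\lambda$ and $k=\frac2t\mathbf 1_{[\frac{t^2}{4}a,\frac{t^2}{4}b]}$, the identity $\|u_x-k\|_{L^2}^2+\|\rho\|_{L^2}^2=\mu_{ac}(t)(\mathbb R)-\frac4t\bigl[u(\tfrac{t^2}{4}b,t)-u(\tfrac{t^2}{4}a,t)\bigr]+E_\lambda$ is just $I_1+I_2+I_3$ regrouped. It closes once $\frac2tu(\frac{t^2}{4}b,t)\to b$ and $\frac2tu(\frac{t^2}{4}a,t)\to a$.

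\textbf{The gap is in Step 1}, which must supply these endpoint limits as well as \eqref{eq:Linfty}. Your expansion of $\frac4{t^2}X(\alpha,t)$ contains the term $\frac4{t^2}\bar x(\alpha)$. As your own justification says, this is $o(1)$ only for $\alpha$ in compact sets, not uniformly on $\mathbb R$ (take $\alpha=t^3$). But the labels $\alpha_l(t),\alpha_r(t)$ carried to $\frac{t^2}{4}a$ and $\frac{t^2}{4}b$ need not stay bounded. If $f(1-\lambda\mathbf 1_J)\di\alpha$ charges every half-line $(M,\infty)$, then $G_\infty(\alpha)<E_\lambda$ for every finite $\alpha$, and $\alpha_r(t)\to+\infty$ is forced. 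Hence ``uniformly on the image'', ``no gap can create an error'' and ``no energy lies outside the interval'' are assertions, not consequences of what you proved. The last one is exactly $\int_{\alpha_r(t)}^{\infty}f(1-\lambda\mathbf 1_{J_t})\di\alpha\to0$. That is the content of \eqref{eq:Limitlambda-xbarlambda=k1}--\eqref{eq:Limitlambda-xbarlambda=k2}, in particular $\bar x(\alpha_l(t))/t^2\to0$ and $\bar x(\alpha_r(t))/t^2\to0$, which is the real core of Theorem~\ref{thm:mainthm1}.

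\textbf{What you call the main obstacle is immediate.} One has $0\le G(\alpha,s)-G_\infty(\alpha)\le\delta(s):=\lambda\int_{\mathbb R}f\mathbf 1_{J\setminus J_s}\di\eta\to0$ uniformly in $\alpha$. The same bound controls the Ces\`aro average $\frac1t\int_0^t$ and the weighted average $\frac2{t^2}\int_0^t(t-s)$ that actually appear in $\frac2tu(X,t)$ and $\frac4{t^2}X$; this is Lemma~\ref{lem:Hospital}. Your formula for $X$ uses the current remaining energy rather than these averages, and you claim $G(\alpha,t)$ freezes once $t>t_\alpha$. Both are inaccurate, since $G(\alpha,t)$ changes whenever any label $\eta<\alpha$ breaks, but both are harmless for this reason.

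\textbf{How to close the gap.} Set $c(\alpha):=G_\infty(\alpha)-\kappa E_\lambda\in[a,b]$. Then, uniformly in $\alpha\in\mathbb R$, $\frac2tu(X(\alpha,t),t)=c(\alpha)+o(1)$ and $\frac4{t^2}X(\alpha,t)=c(\alpha)+\frac4{t^2}\bar x(\alpha)+o(1)$. Fix $\varepsilon>0$ and split into three cases.
If $|\bar x(\alpha)|\le\varepsilon t^2$, the $1$-Lipschitz continuity of $v$ and $v(c)=c$ give an error at most $4\varepsilon+o(1)$.
If $\bar x(\alpha)>\varepsilon t^2$, then $\alpha\ge\bar x(\alpha)$ forces $c(\alpha)\ge c(\varepsilon t^2)\to b$. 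Meanwhile $v$ evaluated at $c(\alpha)$ plus a positive shift lies in $[c(\alpha),b]$.
If $\bar x(\alpha)<-\varepsilon t^2$, then $\alpha\le\bar x(\alpha)+\bar\nu(\mathbb R)$ forces $c(\alpha)\to a$, and the argument is symmetric.
Surjectivity of $X(\cdot,t)$ then gives $\sup_x|\frac2tu(x,t)-v(4x/t^2)|\to0$. This yields \eqref{eq:v}, \eqref{eq:Linfty} and the endpoint limits your Step 2 needs, after which the rest of your proposal goes through.
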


In the remainder of this paper, we focus solely on the case $0\leq\lambda<1$. The dissipative case $\lambda=1$ can be treated similarly; we sketch the relevant details in Appendix~\ref{app:dissipative}.

The rest of this paper is organized as follows. In Section \ref{sec:char}, the explicit formula for generalized characteristics of $\lambda$-dissipative solutions to the 2HS system will be introduced. The regularity structure, and the existence of the solutions will be then discussed.   In Section \ref{sec:uniqueness}, we prove the uniqueness via characteristics.
In Section \ref{sec:asymp}, the asymptotic behavior is studied and Theorem \ref{thm:mainasym} will be proved. We put the uniqueness of weak solutions to the fully dissipative case into Appendix \ref{app:dissipative}.

\section{Explicit characteristics and regularity structure}\label{sec:char}
\subsection{Explicit characteristics} In this subsection, we introduce the generalized characteristics for $\lambda$-dissipative solutions, which has an explicit formula determined by the initial data $(\bar{u},\bar{\rho},\bar{\mu},\bar{\nu})\in\mathcal{D}_{\lambda}$. The main idea follows from \cite{gao2022regularity}, where an explicit formula for characteristics of conservative solutions to the HS equation \eqref{eq:HS} was introduced and then used in \cite{gao2023asymptotic} for analyzing the asymptotic behavior. To motivate the generalized characteristics for $\lambda$-dissipative solutions, let us begin with the case that the conservative solution $(u(x,t),\rho(x,t),\mu(t),\nu(t))$ is always classical, in the sense that $\di \mu(t)=\di\nu(t)=(u_x^2+\rho^2)(x,t)\di x$ for all time $t\in [0,\infty)$. In this case, the energy conservation  \eqref{eq:conservationlaw} also holds.
For any given $\beta\in\mathbb{R}$, $t\geq0$, we define $x(\beta,t)$ by
\begin{align}\label{eq:defx}
x(\beta,t)+\int_{(-\infty, x(\beta,t))}(u_y^2+\rho^2)(y,t)\di y=\beta. 
\end{align}
Differentiating \eqref{eq:defx} with respect to $t$ and $\beta$ respectively, we obtain
\begin{equation}\label{eq:dtx&dbetax}
\partial_tx(\beta,t)=\frac{[u(u_x^2+\rho^2)](x(\beta,t),t)}{1+(u_x^2+\rho^2)(x(\beta,t),t)},\quad\mbox{and}\quad \partial_{\beta}x(\beta,t)=\frac{1}{1+(u_x^2+\rho^2)(x(\beta,t),t)},
\end{equation}
where we have applied \eqref{eq:conservationlaw} when computing $\partial_tx(\beta,t)$.
For any fixed $\alpha\in \mathbb{R}$, we denote  $\beta(t)$ the unique solution to the following initial-value problem:
\begin{align}\label{eq:betat}
\beta'(t)=u(x(\beta(t),t),t),\quad\mbox{and}\quad \beta(0)=\alpha.
\end{align}
It follows from the chain rule, \eqref{eq:dtx&dbetax} and \eqref{eq:betat} that
\begin{align}\label{eq:translation}
\frac{\di }{\di t}x(\beta(t),t)=\partial_tx(\beta(t),t)+\partial_\beta x(\beta(t),t)\cdot \beta'(t)=u(x(\beta(t),t),t)=\beta'(t).
\end{align}
Next, we denote  $\bar{x}(\alpha)$ the unique solution of
\begin{align}\label{eq:smooth barx} 
\bar{x}(\alpha)+\int_{(-\infty, \bar{x}(\alpha))}(u_y^2+\rho^2)(y,0)\di y=\alpha.
\end{align}
Then it follows from \eqref{eq:defx}, \eqref{eq:betat} and \eqref{eq:translation} that 
\begin{align}\label{eq:rela}
x(\beta(0),0)=\bar{x}(\alpha),\quad\mbox{and}\quad \beta(t)-x(\beta(t),t)=\alpha-\bar{x}(\alpha).
\end{align}
Furthermore, differentiating \eqref{eq:betat} with respect to $t$, and then using the chain rule, \eqref{eq:HS}, \eqref{eq:defx}, \eqref{eq:betat}, and \eqref{eq:translation}, one can verify that 
\[
\beta''(t)=\frac{1}{2}[\beta(t)-x(\beta(t),t)]-\frac{\kappa}{2}\bar{\nu}(\mathbb{R}).
\]
Taking one more time derivative and using \eqref{eq:translation}, we finally obtain
\begin{align}\label{eq:thirdorder}
\beta'''(t)=0.
\end{align}
Moreover, we have the following initial data for the ODE \eqref{eq:thirdorder}: 
\begin{align}\label{eq:initial}
\beta(0)=\alpha,\quad \beta'(0)=u(\bar{x}(\alpha),0),\quad \beta''(0)=\frac{1}{2}[\alpha-\bar{x}(\alpha)]-\frac{\kappa}{2}\bar{\nu}(\mathbb{R}).
\end{align}
Therefore, solving the initial value problem \eqref{eq:thirdorder} and \eqref{eq:initial} yields
\begin{align}\label{eq:global1}
\beta(t)=\alpha+u(\bar{x}(\alpha),0)t+\frac{t^2}{4}\left[\alpha-\bar{x}(\alpha)-\kappa\bar{\nu}(\mathbb{R})\right],
\end{align}
and hence, using \eqref{eq:rela}, we also have 
\begin{align}\label{eq:global}
x(\beta(t),t)=\bar{x}(\alpha)+u(\bar{x}(\alpha),0)t+\frac{t^2}{4}\left[\alpha-\bar{x}(\alpha)-\kappa\bar{\nu}(\mathbb{R})\right].
\end{align}
Therefore, to obtain the global formulae for $\beta(t)$ and $x(\beta(t),t)$, we only need the information of initial data $u(x,0)$.  

Now we apply the same idea to get the explicit formulae for characteristics of $\lambda$-dissipative solutions in the generalized framework.
For general initial data $(\bar{u},\bar{\rho},\bar{\mu},\bar{\nu})$ in $\mathcal{D}_{\lambda}$ satisfying \eqref{eq:initialdecay}, we define $\bar{x}(\alpha)$ by 
\begin{align}\label{eq:barx1}
\bar{x}(\alpha)+\bar{\nu}((-\infty, \bar{x}(\alpha)))\leq \alpha \leq \bar{x}(\alpha)+\bar{\nu}((-\infty, \bar{x}(\alpha)]). 
\end{align} 
Here instead of $\bar{\mu}$, the measure $\bar{\nu}$ was used to define $\bar{x}$ in order to take the dissipation on  $t=0$ into consideration.
It can be directly checked that $\bar{x}(\alpha)$ is well-defined and $\bar{x}(\alpha) \leq  \alpha$  for all $\alpha  \in  \mathbb{R}$. Moreover, $\bar{x}(\alpha)$ is a nondecreasing and Lipschitz continuous function, with Lipschitz constant bounded by 1; see \cite[Proposition 2.1]{gao2022regularity} for the proofs of these facts. The role of $\bar{x}(\alpha)$ is to ``flatten" the singular part of $ \bar{\nu}$ in the $\alpha$-coordinate. 

First, consider $\lambda=0$, i.e., the energy conservative case. 
In this case, we can still use \eqref{eq:global} to define the generalized characteristics. Let $y_1(\alpha,t)=x(\beta(t),t)$ and we have the following explicit formula of characteristics for conservative solutions:
\begin{align}\label{eq:generalizedchara1} 
y_1(\alpha,t):=x(\beta(t),t)=\bar{x}(\alpha)+\bar{u}(\bar{x}(\alpha))t+\frac{t^2}{4}\left[\alpha-\bar{x}(\alpha)-\kappa\bar{\nu}(\mathbb{R})\right].
\end{align}
Moreover, the conservative solution $(u_1,\rho_1,\mu_1,\nu_1)$ (in this case $\mu_1=\nu_1$) can be expressed explicitly as follows: 
\begin{equation}\label{eq:measuresolutionu1}
\left\{
\begin{aligned}
&u_1(x,t)=\frac{\partial}{\partial t}y_1(\alpha,t)=\bar{u}(\bar{x}(\alpha))+\frac{t}{2}\left[\alpha-\bar{x}(\alpha)-\kappa\bar{\nu}(\mathbb{R})\right], ~\textrm{for}~  x =y_1(\alpha,t),\\
&\rho_1(t)\di x=y_1(\cdot,t)\#[(\bar{\rho}\circ \bar{x})\bar{x}' \di\alpha],\\
&\nu_1(t)=\mu_1(t)=y_1(\cdot,t)\# (f\di \alpha), \quad f(\alpha) := 1-\bar{x}'(\alpha).
\end{aligned}
\right.
\end{equation}
Define 
\begin{align}\label{eq:AB0L}
A_0^L:=\{\alpha\in\mathbb{R}:~~\bar{x}'(\alpha)=0\},\qquad B_0^L:=\{\alpha\in\mathbb{R}:~~\bar{x}'(\alpha)>0\}.
\end{align}
\begin{remark}
Some sets in this paper are understood in the almost‑everywhere sense; for convenience (which will not affect the results), we shall not repeat this henceforth.
\end{remark}
From \eqref{eq:barx1} and \cite[Eq. (2.21)]{gao2022regularity}, one has
\begin{equation}\label{eq:flambda}
f(\alpha)=1-\bar{x}'(\alpha)=\left\{
\begin{aligned}
&(\bar{u}_x^2+\bar{\rho}^2)(\bar{x}(\alpha))\bar{x}'(\alpha),\quad  \alpha\in B_0^L,\\
&1,\quad \alpha\in A_0^L.
\end{aligned}\right.
\end{equation}
By \eqref{eq:generalizedchara1} and \eqref{eq:flambda}, the following identity holds:
\begin{equation}\label{eq:separation1}
y_{1\alpha}(\alpha,t)=\left\{
\begin{aligned}
&\bar{x}'(\alpha)\left(\left[1+\frac{t}{2}\bar{u}_x(\bar{x}(\alpha))\right]^2+\frac{t^2}{4}\bar{\rho}^2(\bar{x}(\alpha))\right),\quad \alpha\in B_0^L,\\
&\frac{t^2}{4},\quad \alpha\in A_0^L.
\end{aligned}
\right.
\end{equation}

For initial data $(\bar{u},\bar{\rho},\bar{\mu},\bar{\nu})\in\mathcal{D}_{\lambda}$ satisfying \eqref{eq:initialdecay}, to describe the corresponding characteristics of the $\lambda$-dissipative solutions, we first define the following sets:
\begin{equation}\label{eq:blowupsets}
I_t:=\left\{\alpha\in \mathbb{R}:\bar{u}_x(\bar{x}(\alpha))\leq-\frac{2}{t},~~ \bar{\rho}(\bar{x}(\alpha))=0\right\},\quad J_t:=I_t\cup A_0^L,\quad t>0,
\end{equation}
and
\begin{equation}\label{eq:blowupsets1}
J:=  \cup_{s>0} J_s = \left\{\alpha\in \mathbb{R}:\bar{u}'(\bar{x}(\alpha))<0,~~ \bar{\rho}(\bar{x}(\alpha))=0\right\}\cup A_0^L. 
\end{equation}
Here the set $J$ contains all the Lagrangian labels $\alpha$ that make $y_{1\alpha}(\alpha,t)=0$ at some time $t\geq0$. 
The set $I_t$ contains all the Lagrangian labels which will make $y_{1\alpha}(\alpha,s)=0$ for $0<s=-{2}/{\bar{u}'(\bar{x}(\alpha))}\leq t$. Clearly, 
$I_t$ is an increasing sequence of sets with respect to $t$.
From \eqref{eq:separation1},  $I_t\cap A_0^L=\emptyset$ for any $t>0$. 
With the help of $J_t$, the acceleration of the characteristics $y(\alpha,t)$ for $\lambda$-dissipative solutions is described by
\[
\frac{\partial^2}{\partial t^2}y(\alpha,t) = \frac{1}{2}\left[(1-\kappa)\int_{-\infty}^{\alpha}-\kappa\int_{\alpha}^{+\infty}\right]f(\eta)[1-\lambda \mathbf{1}_{J_t}(\eta)]\di \eta.
\]
For $\lambda=0$, the right hand side becomes $\alpha-\bar{x}(\alpha)-\kappa\bar{\nu}(\mathbb{R})$, which is exactly the the acceleration for conservative characteristics $y_1(\alpha,t)$.
From the above relation, the  explicit formula of  $y(\alpha,t)$ is given by:
\begin{equation}\label{eq:lambda_dissipation}
\begin{aligned}
y(\alpha&,t)=\bar{x}(\alpha)+\bar{u}(\bar{x}(\alpha))t+ \frac{1}{2}\int_{0}^{t}\int_0^s\left[(1-\kappa)\int_{-\infty}^{\alpha}-\kappa\int_{\alpha}^{+\infty}\right]f(\eta)[1-\lambda \mathbf{1}_{J_\tau}(\eta)]\di \eta\di\tau\di s\\
=&\bar{x}(\alpha)+\bar{u}(\bar{x}(\alpha))t+\frac{t^2}{4}\left[\alpha-\bar{x}(\alpha) -  \kappa \bar{\nu}(\mathbb{R}) \right]  - \frac{\lambda}{2}\int_{0}^{t}(t-s)\left[\int_{-\infty}^{\alpha}-\kappa\int_{\mathbb{R}}\right]f(\eta)\mathbf{1}_{J_s}(\eta)\di \eta\di s,
\end{aligned}
\end{equation}
where we used the repeated integral formula for the last equality.
This characteristic implies that $\lambda$-proportion of the concentrated energy corresponding to singularities formed before time $t$ will be dropped.
For $\lambda=0$, the formula \eqref{eq:lambda_dissipation} becomes \eqref{eq:generalizedchara1}, which is the characteristic of the energy conservative solutions. 
The $\lambda$-dissipative solution $(u(t),\rho(t),\mu(t),\nu(t))$ is then recovered by:
\begin{equation}\label{eq:solution2}
\left\{
\begin{aligned}
u(x,t)&=y_t(\alpha,t)=\bar{u}(\bar{x}(\alpha))+\frac{t}{2}\left[\alpha-\bar{x}(\alpha) -\kappa\bar{\nu}(\mathbb{R})\right]\\
&-\frac{\lambda}{2}\int_{0}^{t} \left[\int_{-\infty}^{\alpha}-\kappa\int_{\mathbb{R}}\right]f(\eta)\mathbf{1}_{J_s}(\eta)\di \eta\di s, \quad\textrm{where } x=y(\alpha,t),\\
\rho(x,t)\di x&=y(\cdot,t)\#[(\bar{\rho}\circ \bar{x})\bar{x}' \di\alpha],\\
\mu(t)&=y(\cdot,t)\# [f(1-\lambda \mathbf{1}_{J_t})\di \alpha],\\
\nu(t)&=y(\cdot,t)\# (f\di \alpha).
\end{aligned}
\right.
\end{equation}
Notice that the formulae \eqref{eq:lambda_dissipation} and \eqref{eq:solution2} are explicitly determined by initial data, and convenient to deal with the structure and asymptotic behavior of solutions. 
\begin{remark}[dissipative case $\lambda=1$]\label{rmk:dissipative}
When $\lambda=1$, the singular part of the energy measure vanishes completely, reducing the system to the binary pair $(u,\rho)$ without the measures $\mu$ and $\nu$. Although the full energy-measure framework remains available, this reduced setting removes the associated technical overhead and considerably simplifies the subsequent analysis; see Appendix~\ref{app:dissipative}.
\end{remark}

\subsection{Regularity structure}
In this subsection, we show the structure of singularities of $\lambda$-dissipative solutions to the 2HS system in  Theorem \ref{thm:introduce}, and the existence result in  Theorem \ref{thm:introduce} will be proved in the next section. 
We have the following theorem:
\begin{theorem}\label{thm:measure}
Let $0\leq\lambda<1$. Assume $(\bar{u},\bar{\rho},\bar{\mu},\bar{\nu})\in\mathcal{D}_{\lambda}$ satisfying \eqref{eq:initialdecay},  and functions $\bar{x}(\alpha)$, $f(\alpha)$  are defined by \eqref{eq:barx1} and \eqref{eq:flambda}. 
The characteristic $y(\alpha,t)$ and $(u(t),\rho(t),\mu(t),\nu(t))$ are defined by \eqref{eq:lambda_dissipation} and \eqref{eq:solution2}, separately. Let
\begin{equation}\label{eq:z12}
z_1(x,t)=\inf\{\alpha:~y(\alpha,t)=x\},\quad z_2(x,t)=\sup\{\alpha:~y(\alpha,t)=x\}
\end{equation}
be two pseudo-inverses of $y(\cdot,t)$ for a fixed $t\geq0$, and
\[
A_t^{L,pp}=\{\alpha:~~y_\alpha(\alpha,t)=0,~~z_1(y(\alpha,t),t)<z_2(y(\alpha,t),t)\},
\]
\begin{align}\label{eq:ABt}
A_t^{L,sc}=\{\alpha:~~y_\alpha(\alpha,t)=0,~~z_1(y(\alpha,t),t)=z_2(y(\alpha,t),t)\},\quad\mbox{and}\quad B_t^L=\{\alpha:~~y_\alpha(\alpha,t)>0\}.
\end{align}
Let $\mu(t)=\mu_{ac}(t)+\mu_{pp}(t)+\mu_{sc}(t)$, where $\mu_{ac}(t)$, $\mu_{pp}(t)$, and $\mu_{sc}(t)$ are the absolutely continuous part, pure point part and the singular continuous part of ${\mu}(t)$ respectively. (Same for $\nu(t)$)
Then we have 
\begin{enumerate}
\item [$\bullet$] \textbf{Properties of $\mu(t)$ and $\nu(t)$:}
\item [(i)] Energy conservation: we have $\nu \in C([0,\infty); \mathcal{M}_+(\mathbb{R}))$ and
\begin{align}\label{eq:conser}
\nu(t)(\mathbb{R})= \bar{\nu}(\mathbb{R}),\quad t>0.
\end{align}
\item [(ii)] We have $\mu(t)\ll\nu(t)$, and the following structure of  $\mu(t)$ and $\nu(t)$ holds
\begin{equation}\label{eq:decompmutmu}
\begin{aligned}
\mu_{pp}(t)=y(\cdot,t)\#[f(1-&\lambda \mathbf{1}_{J_t})|_{A_t^{L,pp}}\di \alpha],\quad \mu_{sc}(t)=y(\cdot,t)\#[f(1-\lambda \mathbf{1}_{J_t})|_{A_t^{L,sc}}\di \alpha],\\
&\mu_{ac}(t)=y(\cdot,t)\#[f(1-\lambda \mathbf{1}_{J_t})|_{B_t^{L}}\di \alpha],
\end{aligned}
\end{equation}
and
\begin{align}\label{eq:decompmutnu}
\nu_{pp}(t)=y(\cdot,t)\#(f|_{A_t^{L,pp}}\di \alpha),\quad \nu_{sc}(t)=y(\cdot,t)\#(f|_{A_t^{L,sc}}\di \alpha),\quad \nu_{ac}(t)=y(\cdot,t)\#(f|_{B_t^L}\di \alpha).
\end{align}
Moreover, for any $t> 0$, we have
\begin{align}\label{eq:singulart}
A_t^{L,pp}\cup A_t^{L,sc}=\left\{\alpha\in B_0^L:~~\bar{u}_x(\bar{x}(\alpha))=-\frac{2}{t},~~ \bar{\rho}(\bar{x}(\alpha))=0\right\}:=A^L_t,
\end{align}
where $B_0^L$ was defined in \eqref{eq:AB0L}. This implies that for any $t>0$, $\nu_{pp}(t)$ and $\nu_{sc}(t)$ are determined by the absolutely continuous part of $\bar{\mu}_{ac}$, i.e., determined by $\bar{u}_x$ and $\bar{\rho}$. 

\item [(iii)] There are at most countably many time $t\geq0$  such that either the pure point part or the singular continuous part of $\nu(t)$ (or $\mu(t)$ for $\lambda\neq 1$) is not zero; in other words, the set
\begin{align}\label{eq:singulartime}
T_s=T_{pp}\cup T_{sc},\quad T_{pp}:=\{t:~\nu_{pp}(t)\neq 0\},~~	
T_{sc}:=\{t:~\nu_{sc}(t)\neq 0\}.
\end{align}
is countable.

\item [(iv)] We also have
\begin{equation}\label{eq:continu1}
\mu(t+)=\mu(t)=\mu(t-)-\lambda\mu_s(t-)=\mu(t-)-\lambda\nu_s(t)=\mu_{ac}(t-)+(1-\lambda)\mu_s(t-),
\end{equation}
and
\begin{equation}\label{eq:continu2}
\lim_{s\to t-}\mu(s)=\mu(t-),\quad \lim_{s\to t+}\mu(s)=\mu(t+),\quad t>0.
\end{equation}
Moreover,
\begin{align}\label{eq:continuoustime}
\lim_{s\to t}\mu(s) = \mu(t)=\mu_{ac}(t),\quad t\in [0,+\infty)\setminus T_s.
\end{align}

\item[(v)] For every $t>0$, the dissipated part of the energy has the decomposition
\[
\di(\nu-\mu)(t)=h(x,t)\di x+\lambda\di\nu_s(t),
\qquad
h(x,t)=\frac{\lambda}{1-\lambda}u_x^2(x,t)
\mathbf1_{\{\rho(x,t)=0,\;u_x(x,t)\geq2/t\}}.
\]
Moreover, with the convention $J_0=A_0^L$, for every $0\leq\tilde t<t$,
\begin{equation}\label{eq:outgoingcusp}
h(x,t)\leq\frac{4\lambda}{(1-\lambda)(t-\tilde t)^2},
\qquad
\text{for a.e. }x\in y(J_{\tilde t},t).
\end{equation}

\

\item [$\bullet$] \textbf{Properties of $\rho(x,t)$ and $u(x,t)$:}

\item [(vi)] For every $t\geq0$, the function $u(\cdot,t)$ is globally absolutely continuous, $\rho(\cdot,t)\in L^2(\mathbb R)$, and
\begin{gather}\label{eq:measuresolutionrho}
\rho(x,t)=\left\{
\begin{split}
&\rho(y(\alpha,t),t)=\frac{\bar{\rho}(\bar{x}(\alpha))\bar{x}'(\alpha)}{y_\alpha(\alpha,t)}~\textrm{ for }~ x=y(\alpha,t),~~\alpha\in B_t^L,\\
&0~\textrm{ for }~ x=y(\alpha,t),\quad \alpha\in A_t^L.
\end{split}
\right.
\end{gather}
Moreover, we have 
\begin{align}\label{eq:acpart}
\di\mu_{ac}(t)=(u_x^2+\rho^2)(x,t)\di x.
\end{align}
We also have
\begin{equation}\label{eq:rhoproperties}
\rho\in L^\infty(0,\infty;L^2(\mathbb{R}))\cap C_w([0,\infty);L^2(\mathbb{R})),\quad  \|\rho(\cdot,t)\|_{L^2}^2\leq\bar{\mu}(\mathbb{R}).
\end{equation}
and
\begin{equation}\label{eq:propertiesu}
u\in C([0,\infty);C_b(\mathbb{R}))\cap C^{1/2}_{loc}(\mathbb{R}\times [0,\infty)),~~ u_x\in L^\infty(0,\infty;L^2(\mathbb{R})),~~ u_t\in L_{loc}^2(\mathbb{R}\times [0,\infty)).
\end{equation}

\item [(vii)] The Radon--Nikodym derivative satisfies
$\frac{\di\mu(t)}{\di\nu(t)}(x)\in\{1,1-\lambda\}$, and
$\frac{\di\mu(t)}{\di\nu(t)}=1$ for a.e. $x\in\mathbb R$ satisfying
$u_x(x,t)<0$ or $\rho(x,t)\neq0$.

\end{enumerate}
\end{theorem}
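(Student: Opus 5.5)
Everything is read off from the explicit Lagrangian formula \eqref{eq:lambda_dissipation}. The first task is to compute $y_\alpha(\alpha,t)$. Differentiating \eqref{eq:lambda_dissipation} in $\alpha$ gives
\[
y_\alpha=y_{1\alpha}-\frac{\lambda}{2}\int_0^t(t-s)f(\alpha)\mathbf 1_{J_s}(\alpha)\di s .
\]
We then split into the three cases $\alpha\in B_0^L\setminus J$, $\alpha\in I_\infty:=J\cap B_0^L$ and $\alpha\in A_0^L$, using \eqref{eq:separation1} and \eqref{eq:flambda}.

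For $\alpha\in I_\infty$ with blow-up time $t_\alpha=-2/\bar u_x(\bar x(\alpha))$, we have $\mathbf 1_{J_s}(\alpha)=\mathbf 1_{s\ge t_\alpha}$. In this case
\[
y_\alpha=\bar x'(\alpha)\left[(1-t/t_\alpha)^2-\lambda\tfrac{(t-t_\alpha)^2}{t_\alpha^2}\mathbf 1_{t\ge t_\alpha}\right]=\bar x'(\alpha)(1-\lambda\mathbf 1_{t\ge t_\alpha})(1-t/t_\alpha)^2 .
\]
On $A_0^L$ one gets $y_\alpha=(1-\lambda)t^2/4$, and off $J$ one gets $y_\alpha>0$ for $t>0$. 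Hence $y(\cdot,t)$ is nondecreasing and Lipschitz, and $\{y_\alpha=0\}=A_t^L$ as in \eqref{eq:singulart}.

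Properties (i)–(iii) then follow from push-forward calculus. For (i), total mass is preserved by the push-forward, and continuity of $\nu$ follows from the continuity of $t\mapsto y(\cdot,t)$ in the sup norm. For (ii), the push-forward of $g\di\alpha$ restricted to $B_t^L$ is absolutely continuous with density $g/y_\alpha$ at $y$. Restricted to $A_t^{L,pp}$ it lands on the countably many plateau values. Restricted to $A_t^{L,sc}$ it lands on the Lebesgue-null set $y(A_t^L)$ (by the area formula) without atoms. This gives \eqref{eq:decompmutmu}–\eqref{eq:decompmutnu}, and $\mu\ll\nu$ is immediate. For (iii), the sets $A_t^L$ are disjoint for distinct $t$, so only countably many carry positive $f\di\alpha$-measure.

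For (iv) we use $J_{t}\setminus\bigcup_{s<t}J_s=A_t^L$. Thus $f(1-\lambda\mathbf 1_{J_s})\to f(1-\lambda\mathbf 1_{J_t})$ in $L^1$ as $s\downarrow t$, while the limit as $s\uparrow t$ is $f(1-\lambda\mathbf 1_{J_t}+\lambda\mathbf 1_{A_t^L})$. Combining this with uniform convergence of $y(\cdot,s)$ yields \eqref{eq:continu1}–\eqref{eq:continuoustime}.

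For (vi) and the identity \eqref{eq:acpart}, we write $u(y(\alpha,t),t)=y_t(\alpha,t)$. Then $u_x=y_{t\alpha}/y_\alpha$ on $B_t^L$, and $\rho$ is given by the push-forward formula \eqref{eq:measuresolutionrho}. The core is the pointwise algebraic identity
\[
y_{t\alpha}^2+\bar\rho^2(\bar x)\bar x'^2=f(1-\lambda\mathbf 1_{J_t})\,y_\alpha ,
\]
which we verify case by case from the formulas above. On $B_0^L\setminus J$ it is the conservative identity. On $I_\infty$ it follows from $y_{t\alpha}=-\tfrac{1}{t_\alpha}\bar x'(1-\lambda\mathbf 1)(1-t/t_\alpha)$ and $f=\bar x'\cdot 4/t_\alpha^2$. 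On $A_0^L$ it reads $((1-\lambda)t/2)^2=(1-\lambda)(1-\lambda)t^2/4$, which holds. Absolute continuity of $u(\cdot,t)$ then follows since $y_t$ is $\alpha$-Lipschitz and the identity controls it by $y_\alpha^{1/2}$. The remaining statements \eqref{eq:rhoproperties} and \eqref{eq:propertiesu} follow as in \cite{gao2022regularity}. The H\"older-$1/2$ bound uses $|u(x_1)-u(x_2)|\le \mu(\mathbb R)^{1/2}|x_1-x_2|^{1/2}$, and weak continuity of $\rho$ comes from testing against $C_c$ functions via the push-forward.

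The main obstacle is (v) and (vii): matching the Lagrangian indicator $\mathbf 1_{J_t}$ with the Eulerian condition $\{\rho=0,\ u_x\ge 2/t\}$. On $B_t^L$ the same formulas give
\[
u_x=\frac{y_{t\alpha}}{y_\alpha}=\frac{-2}{t_\alpha-t}\ \text{ on } I_\infty, \qquad u_x=\frac{2}{t}\ \text{ on } A_0^L .
\]
Hence $\alpha\in J_t$ with $\rho=0$ corresponds exactly to $u_x=2/(t-t_\alpha)\ge 2/t$. For $\alpha\notin J_t$, either $\rho\ne0$ (since $\bar\rho\neq 0$) or $\rho=0$ and $u_x<2/t$. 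This needs a short separate check for $\rho=0$, $\bar u_x\ge0$, where $u_x=\bar u_x/(1+t\bar u_x/2)<2/t$. On $J_t$ the density of $\nu-\mu$ is then $\lambda f/y_\alpha=\frac{\lambda}{1-\lambda}u_x^2$, while on $A_t^L$ it is $\lambda\nu_s$; this gives (v) and (vii). The bound \eqref{eq:outgoingcusp} follows since $t-t_\alpha\ge t-\tilde t$ for $\alpha\in J_{\tilde t}$, which gives $u_x^2\le 4/(t-\tilde t)^2$. Throughout we must handle null sets and a.e. differentiability of $\bar x$ carefully.
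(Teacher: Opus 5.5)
Your proposal follows the paper's proof essentially step for step: the case-by-case formula for $y_\alpha$ (which is \eqref{eq:separation}), Lemma~\ref{lmm:A2} for (ii), disjointness of the sets $A_t^L$ for (iii), the splitting of $J_t$ into $\bigcup_{s<t}J_s$ and $A_t^L$ for (iv), your Lagrangian identity (which is \eqref{eq:acpart2} multiplied by $y_\alpha$) for (vi), and the phase identification $\mathbf 1_{\{\rho=0,\,u_x\ge 2/t\}}(y(\alpha,t))=\mathbf 1_{J_t}(\alpha)$ on $B_t^L$ for (v)--(vii). The only variations are that you get well-definedness and absolute continuity of $u(\cdot,t)$ from $|y_{t\alpha}|\le (f y_\alpha)^{1/2}$ plus Cauchy--Schwarz instead of the integration-by-parts argument around \eqref{eq:AClebesgue}, which works and is slightly shorter, and that you defer the $u_t\in L^2_{loc}$ claim, which the paper obtains by deriving $u_t+(u^2/2)_x=F$ weakly from the Lagrangian formulas; note also an arithmetic slip on $I_\infty$, where the correct value is $y_{t\alpha}=-\frac{2}{t_\alpha}\bar x'(\alpha)(1-\lambda\mathbf 1_{t\ge t_\alpha})(1-t/t_\alpha)$ rather than $-\frac{1}{t_\alpha}(\cdots)$ (with your factor the identity would be off by $4$, although your later formula $u_x=-2/(t_\alpha-t)$ already uses the correct value).
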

\begin{proof}
All identities involving derivatives with respect to $\alpha$ are understood for almost every $\alpha$. Since $A_0^L\subset J_t$ and $A_0^L\cup B_0^L=\mathbb R$, one has $J_t^c\subset B_0^L$. Differentiating \eqref{eq:lambda_dissipation} with respect to $\alpha$ and using \eqref{eq:flambda}, we obtain
\begin{equation}\label{eq:separation}
\begin{aligned}
y_{\alpha}(\alpha,t)&=\left\{
\begin{aligned}
&(1-\lambda)\bar{x}'(\alpha)\left[1+\frac{t}{2}\bar{u}_x(\bar{x}(\alpha))\right]^2,
&& \alpha\in B_0^L\cap J_t,\\
&\bar{x}'(\alpha)\left(\left[1+\frac{t}{2}\bar{u}_x(\bar{x}(\alpha))\right]^2
 +\frac{t^2}{4}\bar{\rho}^2(\bar{x}(\alpha))\right),
&& \alpha\in J_t^c\subset B_0^L,\\
&(1-\lambda)\frac{t^2}{4},
&& \alpha\in A_0^L,
\end{aligned}
\right.\\
&=\left\{
\begin{aligned}
&[1-\lambda\mathbf{1}_{J_t}(\alpha)]\bar{x}'(\alpha)
\left(\left[1+\frac{t}{2}\bar{u}_x(\bar{x}(\alpha))\right]^2
+\frac{t^2}{4}\bar{\rho}^2(\bar{x}(\alpha))\right),
&& \alpha\in B_0^L,\\
&[1-\lambda\mathbf{1}_{J_t}(\alpha)]\frac{t^2}{4}
=(1-\lambda)\frac{t^2}{4},
&& \alpha\in A_0^L.
\end{aligned}
\right.
\end{aligned}
\end{equation}
In particular, $y_\alpha(\alpha,t)\geq0$. Thus $y(\cdot,t)$ is nondecreasing. Moreover, the explicit formula \eqref{eq:lambda_dissipation} shows that $y(\alpha,t)-\alpha$ remains bounded as $\alpha\to\pm\infty$ on every bounded time interval; hence $y(\cdot,t)$ is surjective.

\medskip
\noindent\textit{Proof of (i).}
Let $\phi\in C_b(\mathbb R)$. Since $f\in L^1(\mathbb R)$ and $s\mapsto y(\alpha,s)$ is continuous for every $\alpha$, the dominated convergence theorem and \eqref{eq:solution2}$_4$ give
\[
\lim_{s\to t}\int_{\mathbb R}\phi(x)\di\nu(s)
=\lim_{s\to t}\int_{\mathbb R}\phi(y(\alpha,s))f(\alpha)\di\alpha
=\int_{\mathbb R}\phi(y(\alpha,t))f(\alpha)\di\alpha
=\int_{\mathbb R}\phi(x)\di\nu(t).
\]
Consequently, $\nu\in C([0,\infty);\mathcal M_+(\mathbb R))$. Taking $\phi\equiv1$ yields
\[
\nu(t)(\mathbb R)=\int_{\mathbb R}f(\alpha)\di\alpha=\bar\nu(\mathbb R),
\]
which is \eqref{eq:conser}.

\medskip
\noindent\textit{Proof of (ii).}
For every Borel set $E\subset\mathbb R$, \eqref{eq:solution2} gives
\[
0\leq\mu(t)(E)
=\int_{y^{-1}(E,t)}f(\alpha)[1-\lambda\mathbf 1_{J_t}(\alpha)]\di\alpha
\leq\int_{y^{-1}(E,t)}f(\alpha)\di\alpha
=\nu(t)(E),
\]
so $\mu(t)\ll\nu(t)$. Applying Lemma~\ref{lmm:A2} to the two push-forward representations in \eqref{eq:solution2} gives \eqref{eq:decompmutmu} and \eqref{eq:decompmutnu}.

It remains to identify the zero set of $y_\alpha(\cdot,t)$. If $t>0$ and $\alpha\in A_0^L$, then \eqref{eq:separation} gives $y_\alpha(\alpha,t)=(1-\lambda)t^2/4>0$. If $\alpha\in J_t^c$, then the second expression in \eqref{eq:separation} is strictly positive. Finally, for $\alpha\in B_0^L\cap J_t$, the first expression in \eqref{eq:separation} vanishes precisely when
\[
\bar u_x(\bar x(\alpha))=-\frac2t,
\qquad
\bar\rho(\bar x(\alpha))=0.
\]
This proves \eqref{eq:singulart}. In particular, all singular parts formed at a positive time are generated by the absolutely continuous initial energy, and therefore are determined by $\bar u_x$ and $\bar\rho$.

\medskip
\noindent\textit{Proof of (iii).}
For distinct positive times $t_1\neq t_2$, relation \eqref{eq:singulart} implies
\[
A_{t_1}^L\cap A_{t_2}^L=\emptyset.
\]
Let $\sigma$ be the finite measure $\di\sigma=f(\alpha)\di\alpha$ on the label space. If $t\in T_{pp}$, then by \eqref{eq:decompmutnu},
\[
\sigma(A_t^{L,pp})=\nu_{pp}(t)(\mathbb R)>0.
\]
The sets $A_t^{L,pp}$ are pairwise disjoint as $t$ varies. A finite measure space contains at most countably many pairwise disjoint measurable sets of positive measure; hence $T_{pp}$ is countable. The same argument, using
\[
\sigma(A_t^{L,sc})=\nu_{sc}(t)(\mathbb R)>0,
\]
shows that $T_{sc}$ is countable. The possible singular time $t=0$ adds at most one element. Therefore $T_s=T_{pp}\cup T_{sc}$ is countable, proving \eqref{eq:singulartime}.

\medskip
\noindent\textit{Proof of (iv).}
For $t>0$, set
\[
J_{t-}:=A_0^L\cup\bigcup_{0<s<t}J_s.
\]
Since $J_s$ is increasing in $s$, one has pointwise in $\alpha$
\[
\lim_{s\to t-}\mathbf 1_{J_s}(\alpha)=\mathbf 1_{J_{t-}}(\alpha),
\qquad
\lim_{s\to t+}\mathbf 1_{J_s}(\alpha)=\mathbf 1_{J_t}(\alpha).
\]
Using again the continuity of $y(\alpha,s)$ and dominated convergence, we obtain the weak-star limits
\[
\mu(t-)=y(\cdot,t)\#[f(1-\lambda\mathbf 1_{J_{t-}})\di\alpha],
\qquad
\mu(t+)=\mu(t)=y(\cdot,t)\#[f(1-\lambda\mathbf 1_{J_t})\di\alpha],
\]
which proves \eqref{eq:continu2}. Moreover,
\[
J_t=J_{t-}\mathbin{\dot\cup}A_t^L.
\]
Since $\mathbf 1_{J_{t-}}=0$ on $A_t^L$, Lemma~\ref{lmm:A2} and \eqref{eq:decompmutnu} yield
\[
\mu_s(t-)=y(\cdot,t)\#[f|_{A_t^L}\di\alpha]=\nu_s(t).
\]
Consequently,
\[
\begin{aligned}
\mu(t)
&=y(\cdot,t)\#[f(1-\lambda\mathbf 1_{J_{t-}})\di\alpha]
  -\lambda y(\cdot,t)\#[f\mathbf 1_{A_t^L}\di\alpha]\\
&=\mu(t-)-\lambda\mu_s(t-)
 =\mu(t-)-\lambda\nu_s(t)\\
&=\mu_{ac}(t-)+(1-\lambda)\mu_s(t-),
\end{aligned}
\]
which is \eqref{eq:continu1}. If $t\notin T_s$, then $\nu_s(t)=0$, and hence $\mu_s(t)=0$ by \eqref{eq:decompmutmu}. The left and right limits therefore agree with $\mu(t)=\mu_{ac}(t)$, proving \eqref{eq:continuoustime} for $t>0$. If $0\notin T_s$, the same right-limit argument at the initial time gives \eqref{eq:continuoustime} for $t=0$.

\medskip
We next establish the spatial identities used in both (v) and (vi). Fix $t>0$. From \eqref{eq:solution2}$_2$, interpreted as an identity of locally finite signed measures, and from the fact that
\[
\bar\rho(\bar x(\alpha))\bar x'(\alpha)=0
\qquad\text{for a.e. }\alpha\in A_t^L,
\]
Applying Lemma~\ref{lmm:A2} separately to the positive and negative parts, together with the monotone change-of-variables formula, gives \eqref{eq:measuresolutionrho}. In particular, after choosing the representative that vanishes on the Lebesgue-null set $y(A_t^L,t)$,
\begin{equation}\label{eq:rholambdat}
\begin{aligned}
\rho(y(\alpha,t),t)y_\alpha(\alpha,t)
&=\bar\rho(\bar x(\alpha))\bar x'(\alpha),
&& \alpha\in\mathbb R,\\
\rho(y(\alpha,t),t)&=0,
&& \alpha\in J_t.
\end{aligned}
\end{equation}

The label function $\alpha\mapsto y_t(\alpha,t)$ is locally absolutely continuous. Differentiating its explicit expression in \eqref{eq:solution2}$_1$ gives
\[
\partial_\alpha y_t(\alpha,t)
=\bar u_x(\bar x(\alpha))\bar x'(\alpha)
+\frac12\int_0^t f(\alpha)[1-\lambda\mathbf 1_{J_s}(\alpha)]\di s.
\]
For $\alpha\in A_t^L$, one has $\alpha\in B_0^L$, $\bar\rho(\bar x(\alpha))=0$, $\bar u_x(\bar x(\alpha))=-2/t$, and $\mathbf 1_{J_s}(\alpha)=0$ for $0\leq s<t$. Hence, by \eqref{eq:flambda},
\begin{equation}\label{eq:importantrelation1}
\begin{aligned}
&\bar u_x(\bar x(\alpha))\bar x'(\alpha)
+\frac12\int_0^t f(\alpha)[1-\lambda\mathbf 1_{J_s}(\alpha)]\di s\\
&\qquad
=\bar u_x(\bar x(\alpha))\bar x'(\alpha)+\frac t2f(\alpha)
=\bar u_x(\bar x(\alpha))\bar x'(\alpha)
  \left[1+\frac t2\bar u_x(\bar x(\alpha))\right]
=0.
\end{aligned}
\end{equation}
Thus $y_t(\alpha,t)$ is constant on every interval on which $y(\cdot,t)$ is constant. Consequently, the prescription $u(y(\alpha,t),t)=y_t(\alpha,t)$ in \eqref{eq:solution2}$_1$ defines an unambiguous Eulerian function.

We now prove its local absolute continuity without using an inverse-function estimate. Let $\phi\in C_c^1(\mathbb R)$. By the area formula for the monotone map $y(\cdot,t)$, followed by integration by parts in $\alpha$, we have
\begin{align}\label{eq:AClebesgue}
\int_{\mathbb R}u(x,t)\phi_x(x)\di x
&=\int_{B_t^L}u(y(\alpha,t),t)\phi_x(y(\alpha,t))y_\alpha(\alpha,t)\di\alpha\notag\\
&=-\int_{\mathbb R}\phi(y(\alpha,t))
\left[\bar u_x(\bar x(\alpha))\bar x'(\alpha)
+\frac12\int_0^t f(\alpha)[1-\lambda\mathbf 1_{J_s}(\alpha)]\di s\right]\di\alpha.
\end{align}
The expression in brackets vanishes on $A_t^L$ by \eqref{eq:importantrelation1}. Hence, after restricting to the inverse image of an arbitrary compact set and applying Lemma~\ref{lmm:A2} separately to the positive and negative parts, the signed measure on the right-hand side of \eqref{eq:AClebesgue} is locally absolutely continuous with respect to Lebesgue measure. It follows that $u(\cdot,t)\in W^{1,1}_{loc}(\mathbb R)$ and therefore is locally absolutely continuous. The chain rule now yields
\begin{equation}\label{eq:ulambdat}
u_x(y(\alpha,t),t)y_\alpha(\alpha,t)
=\bar u_x(\bar x(\alpha))\bar x'(\alpha)
+\frac t2f(\alpha)
-\frac\lambda2\int_0^t f(\alpha)\mathbf 1_{J_s}(\alpha)\di s.
\end{equation}

We next compute the energy density on $B_t^L$. First let
$\alpha\in B_0^L\cap J_t\cap B_t^L$. Then
$\bar u_x(\bar x(\alpha))<0$, $\bar\rho(\bar x(\alpha))=0$, and the breaking time of this label is
$-2/\bar u_x(\bar x(\alpha))<t$. Therefore \eqref{eq:ulambdat}, \eqref{eq:flambda}, and \eqref{eq:rholambdat} give
\begin{equation}\label{eq:uxt}
\begin{aligned}
u_x(y(\alpha,t),t)y_\alpha(\alpha,t)
&=\bar u_x(\bar x(\alpha))\bar x'(\alpha)+\frac t2f(\alpha)
-\frac\lambda2\int_{-\frac{2}{\bar u_x(\bar x(\alpha))}}^t f(\alpha)\di s\\
&=(1-\lambda)\bar u_x(\bar x(\alpha))\bar x'(\alpha)
\left[1+\frac t2\bar u_x(\bar x(\alpha))\right].
\end{aligned}
\end{equation}
Combining this identity with \eqref{eq:separation} and \eqref{eq:rholambdat}, we obtain
\begin{equation}\label{eq:B0LJt}
(u_x^2+\rho^2)(y(\alpha,t),t)y_\alpha(\alpha,t)
=(1-\lambda)(\bar u_x^2+\bar\rho^2)(\bar x(\alpha))\bar x'(\alpha),
\quad \alpha\in B_0^L\cap J_t\cap B_t^L.
\end{equation}

If $\alpha\in J_t^c\cap B_t^L$, then $\mathbf 1_{J_s}(\alpha)=0$ for every $0\leq s\leq t$. Hence \eqref{eq:ulambdat} becomes
\[
u_x(y(\alpha,t),t)y_\alpha(\alpha,t)
=\bar x'(\alpha)\left[
\bar u_x(\bar x(\alpha))\left(1+\frac t2\bar u_x(\bar x(\alpha))\right)
+\frac t2\bar\rho^2(\bar x(\alpha))\right].
\]
Writing 
\[
w_0=\bar u_x(\bar x(\alpha)),\quad r_0=\bar\rho(\bar x(\alpha)),\quad
D=\left(1+\frac t2w_0\right)^2+\frac{t^2}{4}r_0^2,
\]
then direct computations show that
\[
\left[w_0\left(1+\frac{t}{2}w_0\right)+\frac{t}{2}r_0^2\right]^2+r_0^2
=D(w_0^2+r_0^2).
\]
Consequently, \eqref{eq:rholambdat} and \eqref{eq:separation} give
\begin{equation}\label{eq:B0LJtc}
(u_x^2+\rho^2)(y(\alpha,t),t)y_\alpha(\alpha,t)
=(\bar u_x^2+\bar\rho^2)(\bar x(\alpha))\bar x'(\alpha),
\quad \alpha\in J_t^c\cap B_t^L.
\end{equation}
Combining \eqref{eq:B0LJt}, \eqref{eq:B0LJtc}, and \eqref{eq:flambda}, we conclude that
\begin{equation}\label{eq:B0L}
\begin{aligned}
(u_x^2+\rho^2)(y(\alpha,t),t)y_\alpha(\alpha,t)
&=[1-\lambda\mathbf 1_{J_t}(\alpha)]
(\bar u_x^2+\bar\rho^2)(\bar x(\alpha))\bar x'(\alpha)\\
&=[1-\lambda\mathbf 1_{J_t}(\alpha)]f(\alpha),
\quad \alpha\in B_0^L\cap B_t^L.
\end{aligned}
\end{equation}

Finally, let $\alpha\in A_0^L$. Then $\bar x'(\alpha)=0$, $f(\alpha)=1$, $\mathbf 1_{J_t}(\alpha)=1$, and, by \eqref{eq:separation}, $y_\alpha(\alpha,t)=(1-\lambda)t^2/4>0$. Equations \eqref{eq:ulambdat} and \eqref{eq:rholambdat} yield
\begin{equation}\label{eq:uxestimate}
u_x(y(\alpha,t),t)y_\alpha(\alpha,t)=(1-\lambda)\frac t2,
\quad \alpha\in A_0^L,
\end{equation}
and hence
\begin{equation}\label{eq:A0L}
(u_x^2+\rho^2)(y(\alpha,t),t)y_\alpha(\alpha,t)
=[1-\lambda\mathbf 1_{J_t}(\alpha)]f(\alpha),
\quad \alpha\in A_0^L.
\end{equation}
Equations \eqref{eq:B0L} and \eqref{eq:A0L} give the common identity
\begin{align}\label{eq:acpart2}
(u_x^2+\rho^2)(y(\alpha,t),t)y_\alpha(\alpha,t)
=[1-\lambda\mathbf 1_{J_t}(\alpha)]f(\alpha),
\qquad \alpha\in B_t^L.
\end{align}
We first prove (vi).

\noindent\textit{Completion of the proof of (vi).}
By \eqref{eq:decompmutmu}, the absolutely continuous part of $\mu(t)$ is
\[
\mu_{ac}(t)=y(\cdot,t)\#[f(1-\lambda\mathbf 1_{J_t})|_{B_t^L}\di\alpha].
\]
Since $\mathcal L(\mathbb R\setminus y(B_t^L,t))=0$, the change-of-variables formula and \eqref{eq:acpart2} imply \eqref{eq:acpart}.

Furthermore, $J_t\supset A_0^L$ and $J_t$ is increasing, so
\[
\begin{aligned}
\|u_x(\cdot,t)\|_{L^2}^2+\|\rho(\cdot,t)\|_{L^2}^2
&=\mu_{ac}(t)(\mathbb R)\leq\mu(t)(\mathbb R)\\
&=\int_{\mathbb R}f(\alpha)[1-\lambda\mathbf 1_{J_t}(\alpha)]\di\alpha\leq\int_{\mathbb R}f(\alpha)[1-\lambda\mathbf 1_{A_0^L}(\alpha)]\di\alpha
=\bar\mu(\mathbb R).
\end{aligned}
\]
Thus $u_x,\rho\in L^\infty(0,\infty;L^2(\mathbb R))$. Since $u(\cdot,t)$ is locally absolutely continuous and $u_x(\cdot,t)\in L^2(\mathbb R)$, Lemma~\ref{lem:A3} shows that $u(\cdot,t)$ is globally absolutely continuous.

To prove weak continuity of $\rho$, let $\phi\in C_c^\infty(\mathbb R)$. The locally finite signed-measure identity in \eqref{eq:solution2}$_2$ gives
\[
\int_{\mathbb R}\phi(x)\rho(x,t)\di x
=\int_{\mathbb R}\phi(y(\alpha,t))
\bar\rho(\bar x(\alpha))\bar x'(\alpha)\di\alpha.
\]
On every bounded time interval, the inverse image under $y(\cdot,t)$ of $\operatorname{supp}\phi$ remains in a fixed bounded interval of labels. On such intervals, $(\bar\rho\circ\bar x)\bar x'$ belongs to $L^1$ by the Cauchy--Schwarz inequality and the monotone change of variables. Hence the right-hand side is continuous in $t$ by local dominated convergence. The uniform $L^2$ bound and the density of $C_c^\infty(\mathbb R)$ in $L^2(\mathbb R)$ yield
$\rho\in C_w([0,\infty);L^2(\mathbb R))$, proving \eqref{eq:rhoproperties}.

The spatial estimate
\[
|u(x,t)-u(z,t)|\leq\|u_x(\cdot,t)\|_{L^2}|x-z|^{1/2}
\]
is uniform in $t$. Moreover, \eqref{eq:solution2} and the bound
$0\leq\alpha-\bar x(\alpha)\leq\bar\nu(\mathbb R)$ show that both
$y_t(\alpha,t)$ and $y_{tt}(\alpha,t)$ are bounded uniformly in $\alpha$ on every bounded time interval. Given $x=y(\alpha,t)$, we therefore obtain, for $s$ close to $t$,
\[
\begin{aligned}
|u(x,t)-u(x,s)|
&\leq |u(y(\alpha,t),t)-u(y(\alpha,s),s)|
   +|u(y(\alpha,s),s)-u(y(\alpha,t),s)|\\
&\leq C_T|t-s|+C_T|y(\alpha,t)-y(\alpha,s)|^{1/2}
\leq C_T\bigl(|t-s|+|t-s|^{1/2}\bigr).
\end{aligned}
\]
It follows that $u\in C([0,\infty);C_b(\mathbb R))\cap C^{1/2}_{loc}(\mathbb R\times[0,\infty))$.

It remains to prove the asserted regularity of $u_t$ without assuming the field equation in advance. Differentiating \eqref{eq:lambda_dissipation} twice with respect to $t$ gives
\[
y_{tt}(\alpha,t)=\frac12\left[\int_{-\infty}^{\alpha}-\kappa\int_{\mathbb R}\right]
f(\eta)[1-\lambda\mathbf 1_{J_t}(\eta)]\di\eta.
\]
On the other hand, the push-forward formula for $\mu(t)$ in \eqref{eq:solution2} implies, for almost every $\alpha\in B_t^L$,
\[
F(y(\alpha,t),t)
=\frac12\left[\int_{-\infty}^{\alpha}-\kappa\int_{\mathbb R}\right]
f(\eta)[1-\lambda\mathbf 1_{J_t}(\eta)]\di\eta
=y_{tt}(\alpha,t).
\]
Here the possible ambiguity between open and closed cumulative intervals occurs only on the Lebesgue-null image of $A_t^L$ and is therefore irrelevant. The preceding expression also shows that $F$ is bounded on every bounded time interval.

The explicit formulas give $(y_\alpha)_t=(y_t)_\alpha$ for almost every $(\alpha,t)$. Hence, for every $\phi\in C_c^1(\mathbb R\times[0,\infty))$,
\[
\begin{aligned}
&\partial_t\bigl[y_t(\alpha,t)\phi(y(\alpha,t),t)y_\alpha(\alpha,t)\bigr]
-\partial_\alpha\left[\frac12y_t^2(\alpha,t)\phi(y(\alpha,t),t)\right]\\
&\qquad
=\left[y_t(\alpha,t)\phi_t(y(\alpha,t),t)
+\frac12y_t^2(\alpha,t)\phi_x(y(\alpha,t),t)
+F(y(\alpha,t),t)\phi(y(\alpha,t),t)\right]y_\alpha(\alpha,t)
\end{aligned}
\]
for almost every $(\alpha,t)$. Integrating in $(\alpha,t)$, using the area formula and the compact support of $\phi$, gives
\[
\int_0^\infty\!\int_{\mathbb R}
\left(u\phi_t+\frac12u^2\phi_x+F\phi\right)\di x\di t
+\int_{\mathbb R}\bar u(x)\phi(x,0)\di x=0.
\]
Thus
\[
u_t+\left(\frac{u^2}{2}\right)_x=F
\]
in the sense of distributions. Since $u(\cdot,t)$ is locally absolutely continuous, this is
$u_t=F-uu_x$. The boundedness of $F$ and $u$, together with
$u_x\in L^\infty(0,\infty;L^2(\mathbb R))$, yields
$u_t\in L^2_{loc}(\mathbb R\times[0,\infty))$. This proves \eqref{eq:propertiesu}. The assertions at $t=0$ follow from the initial data and \eqref{eq:initialdecay}.

\medskip
\noindent\textit{Proof of (v).}
From \eqref{eq:solution2},
\[
(\nu-\mu)(t)=\lambda y(\cdot,t)\#[f\mathbf 1_{J_t}\di\alpha].
\]
Since $A_t^L\subset J_t$, Lemma~\ref{lmm:A2} gives
\[
(\nu-\mu)_s(t)=\lambda y(\cdot,t)\#[f|_{A_t^L}\di\alpha]
=\lambda\nu_s(t).
\]
Write $\di(\nu-\mu)_{ac}(t)=h(x,t)\di x$. The change-of-variables formula on $B_t^L$ yields
\begin{equation}\label{eq:hyalpha1}
h(y(\alpha,t),t)y_\alpha(\alpha,t)
=\lambda f(\alpha)\mathbf 1_{J_t}(\alpha),
\qquad \alpha\in B_t^L.
\end{equation}

We now identify the Eulerian image of the broken labels. 

Let
$\alpha\in B_0^L\cap J_t\cap B_t^L$ and set
$\tau=-2/\bar u_x(\bar x(\alpha))$. Then
\begin{equation}\label{eq:inequality1}
0<\tau<t,
\qquad
\bar\rho(\bar x(\alpha))=0.
\end{equation}
By \eqref{eq:flambda} and \eqref{eq:separation},
\begin{equation}\label{eq:hyalpha2}
y_\alpha(\alpha,t)=\left\{
\begin{aligned}
&(1-\lambda)\bar x'(\alpha)
\left[1+\frac t2\bar u_x(\bar x(\alpha))\right]^2
=\frac{1-\lambda}{4}f(\alpha)(t-\tau)^2,
&& \alpha\in B_0^L\cap J_t\cap B_t^L,\\
&(1-\lambda)\frac{t^2}{4},
&& \alpha\in A_0^L.
\end{aligned}
\right.
\end{equation}
Using \eqref{eq:rholambdat}, \eqref{eq:uxt}, and \eqref{eq:uxestimate}, we obtain
\[
\rho(y(\alpha,t),t)=0,
\qquad
u_x(y(\alpha,t),t)=\left\{
\begin{aligned}
&\dfrac2{t-\tau}>\dfrac2t,
&& \alpha\in B_0^L\cap J_t\cap B_t^L,\\
&\dfrac2t,
&& \alpha\in A_0^L.
\end{aligned}
\right.
\]

Let $\alpha\in J_t^c$. Recall
\[
w_0=\bar u_x(\bar x(\alpha)),
\qquad
r_0=\bar\rho(\bar x(\alpha)),
\qquad
D=\left(1+\frac t2w_0\right)^2+\frac{t^2}{4}r_0^2.
\]
Then $D>0$, and \eqref{eq:flambda}, \eqref{eq:separation}, \eqref{eq:rholambdat}, and \eqref{eq:ulambdat} imply
\[
u_x(y(\alpha,t),t)
=\frac{w_0(1+tw_0/2)+(t/2)r_0^2}{D},
\qquad
\rho(y(\alpha,t),t)=\frac{r_0}{D}.
\]
For $\alpha\in J_t^c$, if $\rho(y(\alpha,t),t)=0$, then $r_0=0$. In this case, if $w_0<0$, the condition $\alpha\notin J_t$ gives $1+tw_0/2>0$, and hence $u_x(y(\alpha,t),t)<0$. If $w_0\geq0$, then
\[
0\leq u_x(y(\alpha,t),t)=\frac{w_0}{1+tw_0/2}<\frac2t.
\]

Thus, up to a null set,
\[
\mathbf 1_{\{\rho(\cdot,t)=0,\;u_x(\cdot,t)\geq2/t\}}(y(\alpha,t))
=\mathbf 1_{J_t}(\alpha),
\qquad \alpha\in B_t^L.
\]

On $J_t\cap B_t^L$, identity \eqref{eq:acpart2} reduces to
\[
u_x^2(y(\alpha,t),t)y_\alpha(\alpha,t)=(1-\lambda)f(\alpha).
\]
Combining this relation with \eqref{eq:hyalpha1} and the preceding phase characterization gives
\[
h(x,t)=\frac{\lambda}{1-\lambda}u_x^2(x,t)
\mathbf 1_{\{\rho(x,t)=0,\;u_x(x,t)\geq2/t\}}
\quad\text{for a.e. }x\in\mathbb R.
\]
More explicitly, \eqref{eq:hyalpha1} and \eqref{eq:hyalpha2} yield
\begin{equation}\label{eq:hyalpha3}
h(y(\alpha,t),t)=\left\{
\begin{aligned}
&\dfrac{\lambda}{1-\lambda}
\left[\dfrac{2\bar u_x(\bar x(\alpha))}
{2+t\bar u_x(\bar x(\alpha))}\right]^2
=\dfrac{4\lambda}{(1-\lambda)(t-\tau)^2},
&& \alpha\in B_0^L\cap J_t\cap B_t^L,\\
&\dfrac{4\lambda}{(1-\lambda)t^2},
&& \alpha\in A_0^L.
\end{aligned}
\right.
\end{equation}
This proves the asserted decomposition of $\nu(t)-\mu(t)$.

Finally, fix $0\leq\tilde t<t$, with the convention $J_0=A_0^L$. For almost every $x\in y(J_{\tilde t},t)$, write $x=y(\alpha,t)$ with $\alpha\in J_{\tilde t}$. If $\alpha\in A_0^L$, assign $\tau=0$; otherwise its breaking time satisfies $0<\tau\leq\tilde t$. In both cases, \eqref{eq:hyalpha3} gives
\[
h(x,t)=\frac{4\lambda}{(1-\lambda)(t-\tau)^2}
\leq\frac{4\lambda}{(1-\lambda)(t-\tilde t)^2},
\]
which proves \eqref{eq:outgoingcusp}.

\medskip
\noindent\textit{Proof of (vii).}
For $t>0$, the result follows directly from (v) and \eqref{eq:acpart}. Indeed, on the absolutely continuous outgoing region
$\{\rho=0,\;u_x\geq2/t\}$,
\[
\di\nu_{ac}(t)
=\left(1+\frac\lambda{1-\lambda}\right)u_x^2\di x
=\frac1{1-\lambda}\di\mu_{ac}(t),
\]
so $\di\mu/\di\nu=1-\lambda$ there. On the complement of that region, $h=0$, hence $\mu_{ac}=\nu_{ac}$ and $\di\mu/\di\nu=1$. On the singular part,
\[
\mu_s(t)=(1-\lambda)\nu_s(t)
\]
by \eqref{eq:decompmutmu}--\eqref{eq:decompmutnu}. Therefore
\[
\frac{\di\mu(t)}{\di\nu(t)}\in\{1,1-\lambda\}.
\]
If $\rho(x,t)\neq0$ or $u_x(x,t)<0$, then $x$ does not belong to the outgoing region, and hence $\di\mu(t)/\di\nu(t)=1$ for almost every such $x$. The assertion at $t=0$ follows from \eqref{eq:initialdecay}.
\end{proof}

Use the characteristics method provided above to calculate the explicit formula of solution to Example \ref{example} for $t>2$, and one obtains:
\begin{example}[Solution to Example \ref{example} for $t>2$]\label{example2}
According to \eqref{eq:solution2},
the corresponding solutions to Example \eqref{example} for $t>2$ is given by 
\[
u(x,t)=
\begin{cases}
0, & x\le 0,\\[8pt]
\dfrac{2x}{t-2}, 
& 0<x<(1-\lambda)\left(\frac t2-1\right)^2,\\[10pt]
\dfrac{\left(\frac t2-1\right)(x+1-\lambda)}{1+\left(\frac t2-1\right)^2},
& (1-\lambda)\left(\frac t2-1\right)^2<x<1+(2-\lambda)\left(\frac t2-1\right)^2,\\[12pt]
(2-\lambda)\left(\frac t2-1\right), 
& x\ge 1+(2-\lambda)\left(\frac t2-1\right)^2,
\end{cases}
\]
and
\[
\rho(x,t)=
\begin{cases}
\dfrac{1}{1+\left(\frac t2-1\right)^2},
& (1-\lambda)\left(\frac t2-1\right)^2<x<1+(2-\lambda)\left(\frac t2-1\right)^2,\\[10pt]
0, & \text{otherwise}.
\end{cases}
\]
Moreover, for $t>2$, we have
\[
u_x(x,t)=
\begin{cases}
0, & x\le 0,\\[6pt]
\dfrac{2}{t-2}, 
& 0<x<(1-\lambda)\left(\dfrac{t}{2}-1\right)^2,\\[10pt]
\dfrac{\dfrac{t}{2}-1}{1+\left(\dfrac{t}{2}-1\right)^2}, 
& (1-\lambda)\left(\dfrac{t}{2}-1\right)^2<x<1+(2-\lambda)\left(\dfrac{t}{2}-1\right)^2,\\[12pt]
0, & x\ge 1+(2-\lambda)\left(\dfrac{t}{2}-1\right)^2.
\end{cases}
\]
For \(t>2\), if \(0\le \lambda<1\), an outgoing cusp occurs at \((x,t)=(0,2)\).
As \(t\to 2^+\), the interval
\[
0<x<(1-\lambda)\left(\frac{t}{2}-1\right)^2
\]
opens up from \(x=0\), and
\[
u_x(x,t)=\frac{2}{t-2}\to +\infty.
\]
Thus characteristics emanate from \((0,2)\), forming an outgoing cusp.
If \(\lambda=1\), this interval disappears, and the outgoing cusp does not occur.

\end{example}

\subsection{Existence}\label{subsec:existence}
In this subsection, we show that the formula \eqref{eq:solution2} defines global $\lambda$-dissipative solutions to the 2HS system. We have the following existence theorem:
\begin{theorem}[Existence]\label{thm:mainexistence}
Let $(\bar{u},\bar{\rho},\bar{\mu},\bar{\nu})\in \mathcal{D}_{\lambda}$ satisfy \eqref{eq:initialdecay}, and $(u(t),\rho(t),\mu(t),\nu(t))$ be defined by \eqref{eq:solution2}. Then $(u(t),\rho(t),\mu(t),\nu(t))$ is a global-in-time $\lambda$-dissipative solution to the 2HS system in the sense of Definition~\ref{def:weak} with initial datum $(\bar{u},\bar{\rho},\bar{\mu},\bar{\nu})$. Moreover, the functions $u$, $\rho$ and energy measures $\mu$, $\nu$ satisfy all the properties in Theorem \ref{thm:introduce}.
\end{theorem}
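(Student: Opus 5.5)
Most of the work is already in Theorem~\ref{thm:measure}, so the plan is to check the items of Definition~\ref{def:weak} one by one and invoke that theorem wherever it applies. Membership $(u(t),\rho(t),\mu(t),\nu(t))\in\mathcal D_\lambda$ follows from (ii), (vi) and (vii) of Theorem~\ref{thm:measure}. Item (i) of the definition is (i) and (vi) there. In item (ii), the initial conditions come from $y(\alpha,0)=\bar x(\alpha)$ together with $J_0=A_0^L$ and \eqref{eq:initialdecay}. The push-forward of $f\,\di\alpha$ under $\bar x$ recovers $\bar\nu$, as in \cite{gao2022regularity}, and $f(1-\lambda\mathbf 1_{A_0^L})$ pushes forward to $\bar\nu_{ac}+(1-\lambda)\bar\nu_s=\bar\mu$. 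Countability of $T_s$ is (iii), and $\di\mu=(u_x^2+\rho^2)\di x$ off $T_s$ is \eqref{eq:continuoustime} with \eqref{eq:acpart}. Items (v) and (vi) are exactly (v), (vii) and (iv) of Theorem~\ref{thm:measure}. The $u$-equation \eqref{eq:weakformula} was already derived at the end of the proof of (vi), through the Lagrangian identity $\partial_t[y_t\phi y_\alpha]-\partial_\alpha[\tfrac12y_t^2\phi]=\dots$ with $y_{tt}=F(y,t)$.

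The remaining work is the transport identities \eqref{eq:weakformula1}, \eqref{eq:energyconservenu} and \eqref{eq:energydissipation}. In each case I test against $\phi\in C^1_c$ and pull back to labels. For $\rho$ I write $\int\rho\phi\,\di x=\int\phi(y(\alpha,t),t)\bar\rho(\bar x)\bar x'\,\di\alpha$. Differentiating in $t$ gives $\int(\phi_t+y_t\phi_x)(y,t)\bar\rho(\bar x)\bar x'\,\di\alpha$, and $y_t=u(y,t)$ turns this back into $\int\rho(\phi_t+u\phi_x)\di x$. The derivative is legitimate because $y$ is $C^1$ in $t$ with bounded $y_t$ and $(\bar\rho\circ\bar x)\bar x'\in L^1_{loc}$ on the relevant bounded label set. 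Integrating over $t\in(0,\infty)$ gives \eqref{eq:weakformula1}. The same computation with weight $f$ gives \eqref{eq:energyconservenu}, since $f$ does not depend on $t$.

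For $\mu$ the weight $f(1-\lambda\mathbf 1_{J_t})$ depends on $t$, but it is nonincreasing in $t$. I will define $\mathfrak D$ by its action $\int\phi\,\di\mathfrak D=\lambda\int f(\alpha)\phi(y(\alpha,\tau(\alpha)),\tau(\alpha))\mathbf 1_{\{\alpha\in J\setminus A_0^L\}}\di\alpha$, where $\tau(\alpha)=-2/\bar u_x(\bar x(\alpha))$ is the breaking time at which $\alpha$ enters $J_t$. This is a nonnegative finite Radon measure on $\mathbb R\times(0,\infty)$. For fixed $\alpha$, the map $t\mapsto\phi(y(\alpha,t),t)f(1-\lambda\mathbf 1_{J_t}(\alpha))$ is piecewise $C^1$ with a single downward jump of size $\lambda f\phi$ at $\tau(\alpha)$. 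Integrating by parts in $t$ for each fixed $\alpha$ and then applying Fubini gives
\[
\int_0^\infty\!\!\int(\phi_t+u\phi_x)\di\mu\,\di t+\int\phi(\cdot,0)\di\bar\mu=\int\phi\,\di\mathfrak D .
\]
This identity gives \eqref{eq:energydissipation}, it gives \eqref{eq:fourth} because $\mathfrak D\ge0$, and it gives $\mathfrak D=0$ when $\lambda=0$. Labels in $A_0^L$ do not contribute, since they already sit in $J_0$ and their dissipation is built into $\bar\mu$.

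I expect the main obstacle to be the Fubini and integrability bookkeeping in these Lagrangian computations. The issue is that $\phi$ has compact support in $x$, not in $\alpha$, so I need the preimage $\{\alpha: y(\alpha,t)\in\operatorname{supp}\phi\}$ to stay in a bounded label interval uniformly for $t$ in a compact set. This follows from $|y(\alpha,t)-\alpha|\le C_T$, which comes from the explicit formula \eqref{eq:lambda_dissipation}. A second point to check is the identification $y_t(\alpha,t)=u(y(\alpha,t),t)$ on constancy intervals of $y(\cdot,t)$, which \eqref{eq:importantrelation1} ensures is unambiguous. The properties listed in Theorem~\ref{thm:introduce} are then exactly those of Theorem~\ref{thm:measure}.
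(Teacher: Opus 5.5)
Your proposal is correct and follows essentially the same route as the paper. You invoke Theorem~\ref{thm:measure} for items (i), (ii), (v) and (vi) of Definition~\ref{def:weak}, and you check the $\rho$- and $\nu$-identities by pulling back along $y(\cdot,t)$. You obtain \eqref{eq:energydissipation} with exactly the paper's defect measure $\mathfrak D=\lambda\,\bigl(\alpha\mapsto(y(\alpha,\tau(\alpha)),\tau(\alpha))\bigr)\#[f\mathbf 1_{\{0<\tau<\infty\}}\di\alpha]$, using the one-jump integration by parts in $t$ together with $\bar\mu=\bar x\#[f(1-\lambda\mathbf 1_{A_0^L})\di\alpha]$.

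The only difference is cosmetic. For \eqref{eq:weakformula} you cite the Lagrangian computation at the end of the proof of Theorem~\ref{thm:measure}(vi), whereas the paper redoes it inside the existence proof using $[U_t-F(y,t)]q=0$.
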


\begin{proof}
Theorem~\ref{thm:measure} gives conditions (i), (ii), (v), and (vi) in Definition~\ref{def:weak}. We verify the weak equations and the two measure balance laws.

Let
\[
U(\alpha,t):=u(y(\alpha,t),t)=y_t(\alpha,t),
\qquad
q(\alpha,t):=y_\alpha(\alpha,t).
\]
For a.e. $t>0$, differentiation of \eqref{eq:solution2}$_1$ gives
\[
U_t(\alpha,t)
=\frac12\left[\int_{-\infty}^{\alpha}-\kappa\int_{\mathbb R}\right]
f(\eta)[1-\lambda\mathbf1_{J_t}(\eta)]\di\eta.
\]
Moreover, differentiating $y_t=U$ in the label variable gives
$q_t=U_\alpha$ in the sense of distributions, hence a.e. in
$(\alpha,t)$. At every time outside the countable set $T_s$, the measure
$\mu(t)$ is absolutely continuous. Since
\[
\mu(t)=y(\cdot,t)\#[f(1-\lambda\mathbf1_{J_t})\di\alpha],
\]
the monotonicity of $y(\cdot,t)$ and the area formula imply
\begin{equation}\label{eq:calF}
[U_t(\alpha,t)-F(y(\alpha,t),t)]q(\alpha,t)=0
\qquad\text{for a.e. }(\alpha,t)\in\mathbb R\times(0,\infty).
\end{equation}
Indeed, whenever $q(\alpha,t)>0$, the label $\alpha$ is a Lebesgue point of
all relevant quantities for a.e. $(\alpha,t)$ and the cumulative mass of
$\mu(t)$ to the left of $y(\alpha,t)$ is the corresponding label integral;
when $q=0$, the displayed identity is automatic.

We now verify the first weak equation without taking traces of $u_t$ along a
single characteristic. By the monotone area formula,
\[
\begin{aligned}
\int_0^\infty\!\!\int_{\mathbb R}
\left[u\phi_t+\frac{u^2}{2}\phi_x+F\phi\right]\di x\di t=\int_0^\infty\!\!\int_{\mathbb R}
\left[Uq\phi_t(y,t)+\frac{U^2}{2}q\phi_x(y,t)
+F(y,t)q\phi(y,t)\right]\di\alpha\di t.
\end{aligned}
\]
Using $y_t=U$, $q_t=U_\alpha$, and \eqref{eq:calF}, the integrand on the
right equals
\[
\frac{\di}{\di t}[Uq\phi(y,t)]
-\frac12\frac{\partial}{\partial\alpha}[U^2\phi(y,t)]
\]
in the sense of distributions. The label boundary term vanishes because
$\phi$ has compact spatial support and $y(\cdot,t)$ is surjective. Integrating
in time and using
$U(\alpha,0)=\bar u(\bar x(\alpha))$ and
$q(\alpha,0)=\bar x'(\alpha)$ gives
\[
\int_0^\infty\!\!\int_{\mathbb R}
\left[u\phi_t+\frac{u^2}{2}\phi_x+F\phi\right]\di x\di t
=-\int_{\mathbb R}\bar u(x)\phi(x,0)\di x.
\]
Since $u(\cdot,t)$ is absolutely continuous,
$\int (u^2/2)\phi_x=-\int uu_x\phi$, and \eqref{eq:weakformula} follows.

Next, the identity \eqref{eq:rholambdat} and
$y_t=u(y,t)$ give
\[
\begin{aligned}
\int_0^\infty\int_{\mathbb R}
\rho(\phi_t+u\phi_x)\di x\di t
=\int_0^\infty\int_{\mathbb R}
\bar\rho(\bar x(\alpha))\bar x'(\alpha)
\frac{\di}{\di t}\phi(y(\alpha,t),t)\di\alpha\di t=-\int_{\mathbb R}\bar\rho(x)\phi(x,0)\di x,
\end{aligned}
\]
which is \eqref{eq:weakformula1}.

The transport identity for $\nu$ follows directly from
$\nu(t)=y(\cdot,t)\#[f\di\alpha]$:
\[
\begin{aligned}
\int_0^\infty\int_{\mathbb R}
(\phi_t+u\phi_x)\di\nu(t)\di t
=\int_{\mathbb R}f(\alpha)
\int_0^\infty\frac{\di}{\di t}
\phi(y(\alpha,t),t)\di t\di\alpha=-\int_{\mathbb R}\phi(x,0)\di\bar\nu.
\end{aligned}
\]
Thus \eqref{eq:energyconservenu} holds.

It remains to identify the defect measure in the balance law for $\mu$. Define the first breaking time by
\[
\tau(\alpha):=
\begin{cases}
0,&\alpha\in A_0^L,\\[1mm]
-\dfrac{2}{\bar u_x(\bar x(\alpha))},
&\alpha\in B_0^L,\quad
\bar u_x(\bar x(\alpha))<0,\quad
\bar\rho(\bar x(\alpha))=0,\\[2mm]
+\infty,&\text{otherwise}.
\end{cases}
\]
Then $J_t=\{\alpha:\tau(\alpha)\leq t\}$, with the convention
$J_0=A_0^L$. Define the finite nonnegative Radon measure
\[
\mathfrak D
:=\lambda
\left(\alpha\mapsto
(y(\alpha,\tau(\alpha)),\tau(\alpha))\right)\#
\left[f(\alpha)\mathbf1_{\{0<\tau(\alpha)<\infty\}}\di\alpha\right]
\]
on $\mathbb R\times(0,\infty)$. For
$\psi_\alpha(t)=\phi(y(\alpha,t),t)$, Fubini's theorem and the elementary one-dimensional integration-by-parts formula
\[
\int_0^\infty\psi_\alpha'(t)
[1-\lambda\mathbf1_{\{\tau(\alpha)\leq t\}}]\di t
= -[1-\lambda\mathbf1_{A_0^L}(\alpha)]\psi_\alpha(0)
+\lambda\mathbf1_{\{0<\tau(\alpha)<\infty\}}
\psi_\alpha(\tau(\alpha))
\]
give
\begin{equation}\label{eq:conservation of mass}
\begin{aligned}
\int_0^\infty\int_{\mathbb R}
(\phi_t+u\phi_x)\di\mu(t)\di t
+\int_{\mathbb R}\phi(x,0)\di\bar\mu
&=\lambda\int_{\{0<\tau(\alpha)<\infty\}}
\phi(y(\alpha,\tau(\alpha)),\tau(\alpha))
f(\alpha)\di\alpha\\
&
=\int_{\mathbb R\times(0,\infty)}
\phi(x,t)\di\mathfrak D(x,t).
\end{aligned}
\end{equation}
Here we used
$\bar\mu=\bar x\#[f(1-\lambda\mathbf1_{A_0^L})\di\alpha]$, which is equivalent to \eqref{eq:initialdecay}. This proves \eqref{eq:energydissipation}. Since $\mathfrak D$ is nonnegative, \eqref{eq:fourth} follows. For $\lambda=0$, the measure $\mathfrak D$ vanishes.
\end{proof}

\begin{remark}\label{rmk:testfunction}
The cutoff argument in the proof shows that the measure identities
\eqref{eq:energyconservenu}--\eqref{eq:fourth} remain valid for bounded
$C^1$ test functions with bounded derivatives whose time support is contained
in $[0,M)$ for some $M>0$. For the Lebesgue-space weak equations
\eqref{eq:weakformula} and \eqref{eq:weakformula1}, the spatial compact-support
assumption is retained unless additional spatial integrability is imposed.
\end{remark}

\section{Uniqueness}\label{sec:uniqueness}
In this section, we prove uniqueness of the $\lambda$-dissipative solutions to the 2HS system. 
We show that every solution in the sense of Definition~\ref{def:weak} coincides with the solution given by \eqref{eq:solution2} for the same initial data.

The proof is divided into three parts. In Subsection~\ref{subsec:charac}, we construct the canonical characteristics associated with the transported measure $\tilde\nu$. In Subsection~\ref{subsec:singularset}, we determine the first breaking time and derive the ordinary differential equations on every component on which $\tilde y_\alpha>0$. In Subsection~\ref{subsec:accumultation}, we identify the Eulerian outgoing region, recover the Lagrangian representation of $\tilde\mu$, and conclude uniqueness.

\subsection{Construction of characteristics}\label{subsec:charac}
We first record the characteristic lemma used below.

\begin{lemma}\label{lmm:dafermos}
Let $u\in C(\mathbb R\times[0,T])$ satisfy
$u_x\in L^\infty(0,T;L^2(\mathbb R))$ and be a weak solution of
\begin{equation}\label{eq:equF}
u_t+\left(\frac{u^2}{2}\right)_x=F(x,t)
\end{equation}
on $\mathbb R\times(0,T)$. Assume that $F\in L^\infty(\mathbb R\times(0,T))$ and that, for a.e. $t\in(0,T)$, $F(\cdot,t)$ has a continuous representative. If $z\in C^1([a,b])$, $[a,b]\subset[0,T]$, satisfies
\[
\dot z(t)=u(z(t),t),
\]
then $v(t):=u(z(t),t)$ is absolutely continuous and
\[
\dot v(t)=F(z(t),t)
\]
for a.e. $t\in(a,b)$. In particular, if $F$ is bounded, then $v$ is Lipschitz continuous and $z'$ is Lipschitz continuous.
\end{lemma}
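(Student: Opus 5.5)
The plan is to mollify in space, use the classical chain rule along the curve $z$, and pass to the limit; this is the Dafermos-type argument. Fix a standard mollifier $\eta_\e$ in $x$ and set $u^\e=u*\eta_\e$, $F^\e=F*\eta_\e$. Mollifying \eqref{eq:equF} in $x$ gives
\[
u^\e_t+\left(\tfrac{u^2}{2}\right)^\e_x=F^\e
\]
in $\mathcal D'(\mathbb R\times(0,T))$, with $(u^2/2)^\e_x=(uu_x)*\eta_\e$. This right-hand side is continuous in $x$ and locally bounded in $t$, because $u$ is locally bounded and $u_x\in L^\infty(0,T;L^2)$. Hence $u^\e_t$ is locally bounded, and $u^\e$ is Lipschitz in $(x,t)$ locally. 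Therefore the composition $v^\e(t):=u^\e(z(t),t)$ is Lipschitz on $[a,b]$. For a.e. $t$,
\[
\dot v^\e(t)=u^\e_t(z,t)+u^\e_x(z,t)\,u(z,t)=F^\e(z,t)+\big[u(z,t)u^\e_x(z,t)-(uu_x)^\e(z,t)\big].
\]
This uses $\dot z=u(z,t)$. Rigorously, one first tests the mollified equation against $\psi(t)\eta_\delta(x-z(t))$ and lets $\delta\to0$, or equivalently uses that $u^\e$ is $W^{1,\infty}_{loc}$.

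Integrating over $[s_1,s_2]\subset[a,b]$ gives
\[
v^\e(s_2)-v^\e(s_1)=\int_{s_1}^{s_2}F^\e(z,t)\,dt+\int_{s_1}^{s_2}R^\e(t)\,dt ,
\]
where $R^\e(t)=\int \big(u(z,t)-u(y,t)\big)u_x(y,t)\,\eta_\e(z-y)\,dy$. Each term has a clear limit as $\e\to0$:
\begin{itemize}
\item By continuity of $u$, $v^\e(s)\to v(s)$ for every $s$.
\item By the continuous-representative assumption and dominated convergence (using $|F^\e|\le\|F\|_\infty$), $F^\e(z(t),t)\to F(z(t),t)$ for a.e. $t$, so the first integral converges to $\int F(z,t)\,dt$.
\end{itemize}

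The main obstacle is showing that the commutator term $R^\e$ vanishes in the limit. I would use the $C^{1/2}$ bound
\[
|u(z,t)-u(y,t)|\le\|u_x(t)\|_{L^2}|z-y|^{1/2}.
\]
Together with Cauchy--Schwarz on the window $|z-y|\le\e$, this gives
\[
|R^\e(t)|\le C\e^{1/2}\cdot\e^{-1}\cdot\e^{1/2}\Big(\int_{|y-z|<\e}u_x^2\,dy\Big)^{1/2}\le C\Big(\int_{z(t)-\e}^{z(t)+\e}u_x^2(y,t)\,dy\Big)^{1/2}.
\]
The bound is uniform in $\e$ and tends to $0$ for a.e. $t$, by absolute continuity of the integral since $u_x(\cdot,t)\in L^2$. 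Dominated convergence then gives $\int R^\e\,dt\to0$.

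Hence
\[
v(s_2)-v(s_1)=\int_{s_1}^{s_2}F(z(t),t)\,dt,
\]
so $v$ is absolutely continuous with $\dot v=F(z,t)$ a.e. Since $F$ is bounded, $v$ is Lipschitz, and $z'=v$ is Lipschitz as well.
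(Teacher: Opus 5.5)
Your proof is correct and is essentially the paper's argument: your $u^\e(z(t),t)$ is exactly the paper's $v_\e(t)=\int u(x,t)\varrho_\e(x-z(t))\,\di x$ with the mollifier reflected, your commutator $R^\e$ becomes the paper's term $\tfrac12\int[u(x,t)-u(z(t),t)]^2\varrho_\e'(x-z(t))\,\di x$ after one integration by parts in $y$, and both proofs dispose of it with the same Cauchy--Schwarz bound in terms of $\int_{z(t)-\e}^{z(t)+\e}u_x^2\,\di x$ followed by dominated convergence. The only difference is that the paper gets $\dot v_\e$ by testing the weak equation directly against $\chi(t)\varrho_\e(x-z(t))$, which is the clean way to justify your chain-rule step; your first suggested route also works, but the bare remark that $u^\e\in W^{1,\infty}_{loc}$ would not suffice on its own, because the curve $\{(z(t),t)\}$ is a null set.
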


\begin{proof}
Let $\varrho\in C_c^\infty((-1,1))$ be nonnegative with
$\int_{\mathbb R}\varrho=1$, and set
\[
\varrho_\e(x)=\e^{-1}\varrho(x/\e),
\qquad
v_\e(t)=\int_{\mathbb R}u(x,t)\varrho_\e(x-z(t))\di x.
\]
Testing \eqref{eq:equF} against a smooth time cutoff multiplied by
$\varrho_\e(x-z(t))$ gives, in the sense of distributions in time,
\[
\dot v_\e(t)
=\int_{\mathbb R}F(x,t)\varrho_\e(x-z(t))\di x
+\int_{\mathbb R}\left[\frac{u^2(x,t)}2-u(z(t),t)u(x,t)\right]
\varrho_\e'(x-z(t))\di x.
\]
Since $\int\varrho_\e'=0$, the last integral equals
\[
\frac12\int_{\mathbb R}[u(x,t)-u(z(t),t)]^2
\varrho_\e'(x-z(t))\di x.
\]
The Cauchy-Schwarz inequality gives
\[
|u(x,t)-u(z(t),t)|^2
\leq |x-z(t)|\int_{[x,z(t)]}|u_x(\xi,t)|^2\di\xi.
\]
It follows that
\[
\left|\int_{\mathbb R}[u(x,t)-u(z(t),t)]^2
\varrho_\e'(x-z(t))\di x\right|
\leq C\int_{z(t)-\e}^{z(t)+\e}|u_x(x,t)|^2\di x.
\]
The right-hand side tends to zero for a.e. $t$ and is dominated by
$C\|u_x(\cdot,t)\|_{L^2}^2\in L^1(0,T)$. Moreover,
$v_\e(t)\to u(z(t),t)$ uniformly on compact time intervals, and the averaged source converges to $F(z(t),t)$ for a.e. $t$ by the assumed continuity of $F(\cdot,t)$. Passing to the limit in the integrated identity yields
\[
u(z(t),t)-u(z(s),s)=\int_s^tF(z(\tau),\tau)\di\tau,
\]
which proves the result.
\end{proof}

Let $(u,\rho,\mu,\nu)$ be a $\lambda$-dissipative solution and write
\[
\di\nu_{ac}(t)=g(x,t)\di x.
\]
Since $\mu(t)\ll\nu(t)$ and $\di\mu_{ac}(t)=(u_x^2+\rho^2)\di x$, one has, for a.e. $t>0$,
\[
0\leq u_x^2(x,t)+\rho^2(x,t)\leq g(x,t),
\qquad
\|g(\cdot,t)\|_{L^1}=\bar\nu(\mathbb R).
\]
We next establish finite propagation for the transported measure $\nu$.

\begin{lemma}\label{lmm:keylemma1}
Let $(u,\rho,\mu,\nu)$ be a $\lambda$-dissipative solution in the sense of Definition~\ref{def:weak}. Let $y\in\mathbb R$, $0<\tau<t$, and $\e_0>0$, and set
\[
a_\pm:=u(y,\tau)\pm\e_0.
\]
If $t-\tau>0$ is sufficiently small, then, whenever $\nu(\tau)$ and $\nu(t)$ are absolutely continuous,
\begin{equation}\label{eq:claim31}
\int_{-\infty}^{y+a_-(t-\tau)}g(x,t)\di x
\leq\int_{-\infty}^{y}g(x,\tau)\di x
\leq\int_{-\infty}^{y+a_+(t-\tau)}g(x,t)\di x.
\end{equation}
For arbitrary $\tau$ and $t$, the corresponding measure inequalities are
\begin{equation}\label{eq:claim3}
\nu(t)((-\infty,y+a_-(t-\tau)])
\leq\nu(\tau)((-\infty,y]),
\qquad
\nu(\tau)((-\infty,y])
\leq\nu(t)((-\infty,y+a_+(t-\tau)]).
\end{equation}
Moreover, for every $T>0$, every $0\leq\tau<t\leq T$, and every $C_T$ satisfying
$\|u\|_{C_b(\mathbb R\times[0,T])}\leq C_T$, one has, at absolutely continuous times,
\begin{equation}\label{eq:claim1}
\int_{-\infty}^{y-C_T(t-\tau)}g(x,t)\di x
\leq\int_{-\infty}^{y}g(x,\tau)\di x
\leq\int_{-\infty}^{y+C_T(t-\tau)}g(x,t)\di x,
\end{equation}
and, at arbitrary times,
\begin{equation}\label{eq:claim}
\nu(t)((-\infty,y-C_T(t-\tau)])
\leq\nu(\tau)((-\infty,y]),
\qquad
\nu(\tau)((-\infty,y])
\leq\nu(t)((-\infty,y+C_T(t-\tau)]).
\end{equation}
\end{lemma}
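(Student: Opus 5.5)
The plan is to test the transport identity \eqref{eq:energyconservenu} against functions that approximate the indicator of a half-line moving with constant speed. Fix $\tau<t$ and a speed $a$, and let $\psi_\e$ be a smooth nonincreasing function of one variable with $\psi_\e=1$ on $(-\infty,0]$, $\psi_\e=0$ on $[\e,\infty)$. I take
\[
\phi(x,s)=\psi_\e\bigl(x-y-a(s-\tau)\bigr)\chi(s),
\]
where $\chi$ is a time cutoff approximating $\mathbf 1_{[\tau,t]}$. By Remark~\ref{rmk:testfunction}, bounded $C^1$ test functions with bounded derivatives and bounded time support are admissible. Then
\[
\phi_s+u\phi_x=\chi(s)\,\psi_\e'(\cdot)\,\bigl(u(x,s)-a\bigr)+\chi'(s)\psi_\e .
\]
Since $\psi_\e'\le 0$, the first term has a definite sign wherever $u-a$ has one on the transition strip $\{0\le x-y-a(s-\tau)\le\e\}$. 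Letting $\chi\to\mathbf 1_{[\tau,t]}$, using the weak continuity $\nu\in C([0,\infty);\mathcal M_+)$, gives
\[
\int\psi_\e(x-y-a(t-\tau))\,\di\nu(t)-\int\psi_\e(x-y)\,\di\nu(\tau)
=\int_\tau^t\!\!\int\psi_\e'\,(u-a)\,\di\nu(s)\di s .
\]

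\emph{Global bounds \eqref{eq:claim1}, \eqref{eq:claim}.} Choose $a=C_T$. Then $u-a\le0$ everywhere, so the right side is $\ge0$. This says that the mass to the left of the front moving at speed $C_T$ does not decrease, i.e.
\[
\int\psi_\e(x-y)\,\di\nu(\tau)\le\int\psi_\e(x-y-C_T(t-\tau))\,\di\nu(t).
\]
Similarly $a=-C_T$ gives the reverse inequality. Letting $\e\to0$, with $\psi_\e\downarrow\mathbf 1_{(-\infty,y]}$-type limits handled by monotone convergence, gives \eqref{eq:claim}. Closed endpoints need care: for the upper bound I use $\psi_\e\ge\mathbf 1_{(-\infty,0]}$ on the $\tau$ side, and for the lower bound a shifted version supported in $(-\infty,0]$ together with the fact that closed half-lines are limits of slightly larger ones. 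At absolutely continuous times the endpoints carry no mass, which yields \eqref{eq:claim1}.

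\emph{Local bounds \eqref{eq:claim31}, \eqref{eq:claim3}.} Here I use $a=a_+=u(y,\tau)+\e_0$ (resp.\ $a_-$). The key step, and the main obstacle, is to guarantee $u(x,s)-a_+\le0$ on the strip $\{x-y-a_+(s-\tau)\in[0,\e]\}$ for $s\in[\tau,t]$. For this I invoke the H\"older continuity $u\in C^{1/2}_{loc}$ from Definition~\ref{def:weak}(i). On the strip, $|x-y|\le C_T(s-\tau)+\e$, so
\[
|u(x,s)-u(y,\tau)|\le C\bigl((s-\tau)+\e\bigr)^{1/2}+C(s-\tau)^{1/2}<\e_0
\]
once $t-\tau$ and $\e$ are small. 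This is exactly the ``$t-\tau$ sufficiently small'' hypothesis, and it forces the sign, so the same limiting argument gives \eqref{eq:claim31} and its measure version.

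For ``arbitrary'' times in \eqref{eq:claim3}, I read the statement as valid under the same smallness. I would iterate only if needed; since $a_\pm$ is frozen at $(y,\tau)$, iteration is not possible, so smallness stays part of the hypothesis. The delicate bookkeeping is the endpoint/limit handling of $\psi_\e$ with possible atoms. I would resolve it by approximating from the appropriate side and using right-continuity of the distribution functions $x\mapsto\nu((-\infty,x])$.
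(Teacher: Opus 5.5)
Your proposal is correct and follows the paper's proof: test \eqref{eq:energyconservenu} with a moving smoothed Heaviside times a time cutoff, get the sign of $u-a$ on the transition strip (from continuity of $u$ at $(y,\tau)$ when $t-\tau$ and $\e$ are small, or everywhere on $[0,T]$ with $a=\pm C_T$), and let $\e\to0$. The differences are cosmetic: the paper inserts an explicit spatial cutoff $\zeta_R$ rather than citing Remark~\ref{rmk:testfunction}, and since $\psi_\e\downarrow\mathbf 1_{(-\infty,0]}$ pointwise, the closed half-lines appear directly on both sides, so no shifted profile is needed (a profile supported in $(-\infty,0]$ would in fact give open half-lines).
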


\begin{proof}
Choose a nonincreasing function $\theta\in C^\infty(\mathbb R)$ satisfying
\begin{equation}\label{eq:theta}
0\leq\theta\leq1,
\qquad
\theta(r)=1\quad(r\leq0),
\qquad
\theta(r)=0\quad(r\geq1),
\qquad
\theta'\leq0,
\end{equation}
and put $\theta_\e(r)=\theta(r/\e)$. We first multiply the test functions below by a spatial cutoff $\zeta_R$ which equals one on $[-R,R]$. Letting $R\to\infty$ produces no error because $u$ is bounded on finite time intervals and $\nu(s)(\mathbb R)=\bar\nu(\mathbb R)$.

For $a\in\mathbb R$ and $\delta>0$ sufficiently small, choose
$\chi_\delta\in C_c^\infty((0,\infty))$ such that
\[
0\leq\chi_\delta\leq1,\qquad
\chi_\delta=1\quad\text{on }[\tau,t],
\qquad
\operatorname{supp}\chi_\delta
\subset(\tau-\delta,t+\delta),
\]
and
\[
\chi_\delta'\geq0\quad\text{on }(\tau-\delta,\tau),
\qquad
\chi_\delta'\leq0\quad\text{on }(t,t+\delta).
\]
Let $\zeta_R\in C_c^\infty(\mathbb R)$ satisfy
\[
0\leq\zeta_R\leq1,\qquad
\zeta_R=1\quad\text{on }[-R,R],
\qquad
|\zeta_R'|\leq \frac{C}{R}.
\]
We use
\[
\phi_{\e,\delta,R}(x,s)
=
\theta_\e(x-y-a(s-\tau))
\chi_\delta(s)\zeta_R(x)
\]
as a test function in \eqref{eq:energyconservenu}. Since
\[
\begin{aligned}
(\partial_s+u\partial_x)\phi_{\e,\delta,R}
={}&
\chi_\delta'(s)
\theta_\e(x-y-a(s-\tau))\zeta_R(x)
\\
&+\chi_\delta(s)[u(x,s)-a]
\theta_\e'(x-y-a(s-\tau))\zeta_R(x)
\\
&+\chi_\delta(s)u(x,s)
\theta_\e(x-y-a(s-\tau))\zeta_R'(x),
\end{aligned}
\]
we obtain
\[
\begin{aligned}
0={}&
\int_0^\infty\int_{\mathbb R}
\chi_\delta'(s)
\theta_\e(x-y-a(s-\tau))\zeta_R(x)
\di\nu(s)\di s
\\
&+
\int_0^\infty\int_{\mathbb R}
\chi_\delta(s)[u(x,s)-a]
\theta_\e'(x-y-a(s-\tau))\zeta_R(x)
\di\nu(s)\di s
\\
&+
\int_0^\infty\int_{\mathbb R}
\chi_\delta(s)u(x,s)
\theta_\e(x-y-a(s-\tau))\zeta_R'(x)
\di\nu(s)\di s.
\end{aligned}
\]
First letting $\delta\downarrow0$ and then $R\to\infty$, using the
weak-star continuity of $\nu$, the boundedness of $u$ on finite time
intervals, and $\nu(s)(\mathbb R)=\bar\nu(\mathbb R)$, we obtain
\begin{equation}\label{eq:regularity1}
\begin{aligned}
&\int_{\mathbb R}
\theta_\e(x-y-a(t-\tau))\di\nu(t)
-\int_{\mathbb R}\theta_\e(x-y)\di\nu(\tau)
\\
&\qquad
=
\int_\tau^t\int_{\mathbb R}
[u(x,s)-a]\theta_\e'(x-y-a(s-\tau))
\di\nu(s)\di s.
\end{aligned}
\end{equation}

For $a=a_+$, continuity of $u$ implies that $u-a_+\leq0$ on the support of $\theta_\e'$ when $t-\tau$ and $\e$ are sufficiently small. Since $\theta_\e'\leq0$, \eqref{eq:regularity1} gives
\begin{equation}\label{eq:regu}
\int_{\mathbb R}\theta_\e(x-y-a_+(t-\tau))\di\nu(t)
\geq\int_{\mathbb R}\theta_\e(x-y)\di\nu(\tau).
\end{equation}
For $a=a_-$, the opposite sign gives
\begin{equation}\label{eq:regu1}
\int_{\mathbb R}\theta_\e(x-y-a_-(t-\tau))\di\nu(t)
\leq\int_{\mathbb R}\theta_\e(x-y)\di\nu(\tau).
\end{equation}
Letting $\e\downarrow0$ proves \eqref{eq:claim3}; if the endpoint measures are absolutely continuous, this is \eqref{eq:claim31}. Taking $a_+=C_T$ and $a_-=-C_T$ removes the smallness restriction and gives \eqref{eq:claim} and \eqref{eq:claim1}.
\end{proof}

\begin{theorem}\label{thm:uniqueness1}
Let $0\leq\lambda<1$, and let
$(\tilde u,\tilde\rho,\tilde\mu,\tilde\nu)$ be a $\lambda$-dissipative solution in the sense of Definition~\ref{def:weak} with initial datum
$(\bar u,\bar\rho,\bar\mu,\bar\nu)\in\mathcal D_\lambda$ satisfying \eqref{eq:initialdecay}. Then there exists a nondecreasing, surjective characteristic $\tilde y(\alpha,t)$ satisfying
\begin{align}\label{eq:flowmap}
\frac{\partial}{\partial t}\tilde y(\alpha,t)
=\tilde u(\tilde y(\alpha,t),t),
\qquad
\tilde y(\alpha,0)=\bar x(\alpha),
\end{align}
\begin{equation}\label{eq:Liptildey}
|\tilde y(\beta,t)-\tilde y(\eta,t)|
\leq e^{t/2}|\beta-\eta|,
\qquad \beta,\eta\in\mathbb R,
\end{equation}
and
\begin{align}\label{eq:energyconserved}
\tilde\nu(t)((-\infty,\tilde y(\alpha,t)))
\leq\alpha-\bar x(\alpha)
\leq\tilde\nu(t)((-\infty,\tilde y(\alpha,t)]).
\end{align}
Moreover,
\[
\tilde\nu(t)=\tilde y(\cdot,t)\#[f(\alpha)\di\alpha].
\]
For $\lambda=0$, the solution coincides with the solution given by \eqref{eq:solution2}; hence the conservative solution is unique.
\end{theorem}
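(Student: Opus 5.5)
The plan follows the Bressan--Chen--Zhang / Dafermos strategy adapted in \cite{gao2022regularity}. Rather than solving the ODE \eqref{eq:flowmap} directly, which may lack uniqueness since $\tilde u$ is only Hölder, we define $\tilde y(\alpha,t)$ through the transported measure. For each $t\geq0$, let $\tilde y(\alpha,t)$ be the unique point with
\[
\tilde y(\alpha,t)+\tilde\nu(t)((-\infty,\tilde y(\alpha,t)))\leq\beta(\alpha,t)\leq\tilde y(\alpha,t)+\tilde\nu(t)((-\infty,\tilde y(\alpha,t)]),
\]
where $\beta(\alpha,t)$ is the unique solution of the Lipschitz ODE
\[
\beta_t=\tilde u(x(\beta,t),t),\qquad \beta(\alpha,0)=\alpha,
\]
and $x(\beta,t)$ is the $1$-Lipschitz pseudo-inverse of $x\mapsto x+\tilde\nu(t)((-\infty,x])$, exactly as for $\bar x$ in \eqref{eq:barx1}.

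\textbf{Lipschitz bound on $\beta$.} Since $|\tilde u(x_1,t)-\tilde u(x_2,t)|\leq |x_1-x_2|^{1/2}\|\tilde u_x\|_{L^2}|x_1-x_2|^{1/2}$ and
\[
|x(\beta_1,t)-x(\beta_2,t)|+\int_{x(\beta_2)}^{x(\beta_1)} g\leq|\beta_1-\beta_2|,
\]
Cauchy--Schwarz together with $\tilde u_x^2\leq g$ gives
\[
|\tilde u(x(\beta_1),t)-\tilde u(x(\beta_2),t)|\leq \tfrac12|\beta_1-\beta_2|.
\]
This holds at a.e.\ $t$, and by continuity at every $t$ after handling atoms. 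Hence $\beta$ is well-posed, $\alpha\mapsto\beta(\alpha,t)$ is nondecreasing, and Gronwall gives the constant $e^{t/2}$. Composing with the $1$-Lipschitz map $x(\cdot,t)$ yields \eqref{eq:Liptildey} and monotonicity. Surjectivity follows because $|\beta-\alpha|\leq C_Tt$.

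\textbf{The key step.} We must show that $t\mapsto\tilde y(\alpha,t)=x(\beta(\alpha,t),t)$ solves \eqref{eq:flowmap} and that $\beta(\alpha,t)-\tilde y(\alpha,t)=\alpha-\bar x(\alpha)$, which is \eqref{eq:energyconserved}. This is where Lemma~\ref{lmm:keylemma1} enters. Fix $\alpha$ and a time $\tau$, and set $y=\tilde y(\alpha,\tau)$. The inequalities \eqref{eq:claim3} with $a_\pm=\tilde u(y,\tau)\pm\e_0$ bound the $\tilde\nu(t)$-mass to the left of $y+a_\pm(t-\tau)$ by the mass to the left of $y$ at time $\tau$. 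Comparing with
\[
\beta(t)=\beta(\tau)+\tilde u(y,\tau)(t-\tau)+o(t-\tau)
\]
shows that the point $x(\beta(t),t)$ lies between $y+a_-(t-\tau)$ and $y+a_+(t-\tau)$. It also shows that the mass to its left equals the old mass up to $o(t-\tau)$ in the sense of the two-sided inequality. Letting $\e_0\to0$ yields $\partial_t\tilde y=\tilde u(\tilde y,t)$ and $\frac{d}{dt}[\beta-\tilde y]=0$ in the Dini sense. Since both functions are Lipschitz in $t$, the identity holds everywhere. At $t=0$ we get $\beta-\tilde y=\alpha-\bar x(\alpha)$. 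Care is needed at times where $\tilde\nu$ has atoms and when $\beta$ lands in a flat interval of $x(\cdot,t)$. I expect this to be the main obstacle, to be handled with the measure versions \eqref{eq:claim3}. Right and left Dini derivatives are treated separately, using the weak-star continuity of $\tilde\nu$.

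\textbf{Remaining conclusions.} From \eqref{eq:energyconserved}, the push-forward of $d\alpha$ restricted to $\{\tilde y\leq x\}$ under $\alpha\mapsto\alpha-\bar x(\alpha)$ matches $\tilde\nu(t)((-\infty,x])$. Because $\alpha-\bar x(\alpha)$ has derivative $f$, this gives $\tilde\nu(t)=\tilde y(\cdot,t)\#(f\,d\alpha)$.

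For $\lambda=0$ we have $\tilde\mu=\tilde\nu$, so $F(\tilde y(\alpha,t),t)$ equals
\[
\tfrac12\Bigl[\int_{-\infty}^\alpha f-\kappa\bar\nu(\mathbb R)\Bigr]=\tfrac12\bigl[\alpha-\bar x(\alpha)-\kappa\bar\nu(\mathbb R)\bigr],
\]
which is constant in $t$ (modulo the null ambiguity on flat parts). Lemma~\ref{lmm:dafermos} then gives $\partial_t^2\tilde y=\frac12[\alpha-\bar x-\kappa\bar\nu(\mathbb R)]$, with $\tilde y(\alpha,0)=\bar x(\alpha)$ and $\partial_t\tilde y(\alpha,0)=\bar u(\bar x(\alpha))$. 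Hence $\tilde y=y_1$ from \eqref{eq:generalizedchara1}. The relations $\tilde u(\tilde y,t)=\partial_t\tilde y$, $\tilde\nu=\tilde\mu=y_1\#(f\,d\alpha)$, and the weak equation for $\tilde\rho$ then identify $(\tilde u,\tilde\rho,\tilde\mu,\tilde\nu)$ with \eqref{eq:measuresolutionu1}. For $\tilde\rho$ we test \eqref{eq:weakformula1} along the characteristics, as in the existence proof, to get $\tilde\rho\,dx=y_1\#[(\bar\rho\circ\bar x)\bar x'\,d\alpha]$.
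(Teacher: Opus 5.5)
\textbf{Overall.} Your construction of $\tilde y$ is the paper's. You use the same generalized inverse of $x\mapsto x+\tilde\nu(t)((-\infty,x])$ and the same label ODE with a $\tfrac12$-Lipschitz right-hand side (the paper's $\theta$). You also use Gr\"onwall for the factor $e^{t/2}$, Lemma~\ref{lmm:keylemma1} for the flow equation, the quantile characterization of $\tilde\nu(t)=\tilde y(\cdot,t)\#[f\di\alpha]$, and Lemma~\ref{lmm:dafermos} for the acceleration when $\lambda=0$.

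\textbf{Main gap: identifying $\tilde\rho$.} The computation in the existence proof runs in the wrong direction. It \emph{starts} from the push-forward representation of $\rho$ and verifies \eqref{eq:weakformula1}. Here you are given an arbitrary $L^2$ weak solution $\tilde\rho$ of a linear continuity equation whose velocity is only $C^{1/2}$, with $\tilde u_x$ unbounded above and below, and you must \emph{derive} the representation. That is a uniqueness statement for this transport equation. It cannot come from reversing the existence computation, because a test function constant along characteristics is not admissible: the label map collapses intervals at singular times and is not bi-Lipschitz. The paper's Step~3 supplies the missing argument. It inserts $\eta(t)\chi_\e$ into \eqref{eq:weakformula1}, where $\chi_\e$ is a smoothed indicator of $[\tilde y(\alpha_1,t),\tilde y(\alpha_2,t)]$. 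Using $\dot y_i=\tilde u(y_i,t)$, the $\tfrac12$-H\"older bound on $\tilde u$ and Cauchy--Schwarz, the commutator is bounded by $C_T\|\psi'\|_{L^2}\|\tilde\rho(\cdot,t)\|_{L^2(y_i,y_i+\e)}\to0$. Hence $\Theta(t)=\int_{\tilde y(\alpha_1,t)}^{\tilde y(\alpha_2,t)}\tilde\rho\di x$ has zero distributional derivative. Weak $L^2$-continuity of $\tilde\rho$ makes $\Theta$ continuous, so it is constant for every $t$. Taking rational endpoints then gives $\tilde\rho(\tilde y,t)\tilde y_\alpha=\bar\rho(\bar x)\bar x'$, which is what identifies $\tilde\rho$ with $\rho$.

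\textbf{Smaller omission: time regularity of $x(\beta,t)$.} Your expansion $\beta(t)=\beta(\tau)+\tilde u(y,\tau)(t-\tau)+o(t-\tau)$ and the phrase ``both functions are Lipschitz in $t$'' presuppose that $t\mapsto x(\beta,t)$ is continuous, indeed Lipschitz. You never establish this. The paper proves $|x_1(\beta,t)-x_1(\beta,s)|\le C_T|t-s|$ from \eqref{eq:claim1} at times outside $T_s$. It then extends this to all times by stability of the generalized inverse under weak-star convergence with conserved total mass.

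\textbf{The atom obstacle you flag.} The closed half-line inequalities \eqref{eq:claim3} do not give the lower bound $x(\beta(t),t)\ge y+a_-(t-\tau)$ when $\beta(\tau)$ lies inside the flat piece created by an atom at $y$. They bound $\tilde\nu(t)((-\infty,y+a_-(t-\tau)])$ only by $\tilde\nu(\tau)((-\infty,y])$, which contains the whole atom. You would need the open half-line version, obtained by applying the lemma at $y-\delta$ with $t-\tau$ small uniformly in $\delta$. The paper avoids this entirely. It argues by contradiction only at $\tau\notin T_s$, where $x(\cdot,\tau)$ is strictly increasing and \eqref{eq:claim31} applies. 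This gives \eqref{eq:flowmap} for a.e.\ $t$, and the paper then integrates using the Lipschitz bound above.
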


\begin{proof}
We divide the proof into four steps.

\textbf{Step 1. Construction and regularity of the generalized inverse.}
For $\beta\in\mathbb R$ and $t\geq0$, define $x_1(\beta,t)$ by
\begin{align}\label{eq:x1betat}
x_1(\beta,t)+\tilde\nu(t)((-\infty,x_1(\beta,t)))
\leq\beta
\leq x_1(\beta,t)+\tilde\nu(t)((-\infty,x_1(\beta,t)]).
\end{align}
This generalized inverse is uniquely defined and is nondecreasing in $\beta$. If $\eta<\beta$, then
\[
\begin{aligned}
\beta-\eta
\geq x_1(\beta,t)-x_1(\eta,t)
+\tilde\nu(t)((x_1(\eta,t),x_1(\beta,t)))\geq x_1(\beta,t)-x_1(\eta,t),
\end{aligned}
\]
so
\begin{equation}\label{eq:Lipx1}
|x_1(\beta,t)-x_1(\eta,t)|\leq|\beta-\eta|.
\end{equation}
Since $\tilde\mu(t)\leq\tilde\nu(t)$ and $\tilde u(\cdot,t)$ is absolutely continuous,
\begin{equation}\label{eq:vxbeta}
\begin{aligned}
|\tilde u(x_1(\beta,t),t)-\tilde u(x_1(\eta,t),t)|&\leq\int_{x_1(\eta,t)}^{x_1(\beta,t)}|\tilde u_x(x,t)|\di x\\
&\quad\leq\frac12\left[x_1(\beta,t)-x_1(\eta,t)
+\tilde\mu(t)((x_1(\eta,t),x_1(\beta,t)))\right]
\leq\frac12(\beta-\eta).
\end{aligned}
\end{equation}

Let $T>0$ and $C_T\geq\|\tilde u\|_{C_b(\mathbb R\times[0,T])}$. At times $s,t\notin T_s$, the measures $\tilde\nu(s)$ and $\tilde\nu(t)$ are absolutely continuous, and \eqref{eq:claim1} together with \eqref{eq:x1betat} gives
\[
|x_1(\beta,t)-x_1(\beta,s)|\leq C_T|t-s|.
\]
We record the stability of the generalized inverse needed to extend this estimate. Suppose that
$\nu_n\overset{*}{\rightharpoonup}\nu$, that
$\nu_n(\mathbb R)=\nu(\mathbb R)$, and that $x_n$ and $x$ are defined by
\eqref{eq:x1betat} for the same value of $\beta$. Then $x_n\to x$.
Indeed, if $M=\nu(\mathbb R)$, then
$\beta-M\leq x_n\leq\beta$. Let a subsequence, not relabeled, satisfy
$x_n\to z$. For every continuity point $a<z$ of the distribution function of
$\nu$, one has $a<x_n$ for all sufficiently large $n$, and hence
\[
\beta\geq x_n+\nu_n(( -\infty,x_n))
\geq x_n+\nu_n(( -\infty,a]).
\]
Passing to the limit and then taking continuity points $a\uparrow z$ gives
\[
z+\nu(( -\infty,z))\leq\beta.
\]
Similarly, for every continuity point $b>z$, one has $x_n<b$ for all sufficiently
large $n$, and
\[
\beta\leq x_n+\nu_n(( -\infty,x_n])
\leq x_n+\nu_n(( -\infty,b)).
\]
Passing to the limit and then taking continuity points $b\downarrow z$ gives
\[
\beta\leq z+\nu((-\infty,z]).
\]
Thus $z$ satisfies the defining inequalities \eqref{eq:x1betat}; their solution is
unique because the identity term makes the corresponding graph strictly
increasing. Hence $z=x$, and the whole sequence converges.

Since $\tilde\nu$ is weak-star continuous and
$[0,T]\setminus T_s$ is dense, choose sequences of absolutely continuous times
$s_n\to s$ and $t_n\to t$. Applying the preceding stability result and passing to
the limit in
\[
|x_1(\beta,t_n)-x_1(\beta,s_n)|\leq C_T|t_n-s_n|
\]
yields the same estimate for arbitrary $s,t\in[0,T]$. Thus $x_1$ is jointly
continuous and locally Lipschitz in time.

\textbf{Step 2. Construction of the canonical flow.}
Consider
\begin{equation}\label{eq:ode1}
\theta(\alpha,t)=\alpha+
\int_0^t\tilde u(x_1(\theta(\alpha,s),s),s)\di s.
\end{equation}
By \eqref{eq:vxbeta}, this equation has a unique global solution. Uniqueness of the scalar ODE preserves the order of the initial labels, and Gr\"{o}nwall's inequality gives
\[
|\theta(\beta,t)-\theta(\eta,t)|\leq e^{t/2}|\beta-\eta|.
\]
Define
\begin{align}\label{eq:charac}
\tilde y(\alpha,t):=x_1(\theta(\alpha,t),t).
\end{align}
Then $\tilde y(\cdot,t)$ is nondecreasing, \eqref{eq:Liptildey} holds, and $\tilde y(\alpha,0)=\bar x(\alpha)$. Moreover,
\begin{align}\label{eq:combine}
\tilde y(\alpha,t)+\tilde\nu(t)((-\infty,\tilde y(\alpha,t)))
\leq\alpha+\int_0^t\tilde u(\tilde y(\alpha,s),s)\di s
\leq\tilde y(\alpha,t)+\tilde\nu(t)((-\infty,\tilde y(\alpha,t)]).
\end{align}

We prove \eqref{eq:flowmap}. Fix $\alpha$ and a time $\tau\notin T_s$ at which $\tilde y(\alpha,\cdot)$ is differentiable. Suppose first that, for some $\e_0>0$,
\begin{align}\label{eq:contradiction}
\partial_t\tilde y(\alpha,\tau)
\leq\tilde u(\tilde y(\alpha,\tau),\tau)-2\e_0.
\end{align}
Set $y=\tilde y(\alpha,\tau)$ and $a_-=\tilde u(y,\tau)-\e_0$. Choose a sequence $t_n\downarrow\tau$ with $t_n\notin T_s$. For $n$ large,
$\tilde y(\alpha,t_n)<y+a_-(t_n-\tau)$. Since the measures at $\tau$ and $t_n$ are absolutely continuous, \eqref{eq:claim31} and \eqref{eq:combine} give
\begin{equation}\label{eq:two}
\theta(\alpha,t_n)
<\theta(\alpha,\tau)
+[\tilde u(\tilde y(\alpha,\tau),\tau)-\e_0](t_n-\tau).
\end{equation}
Dividing by $t_n-\tau$ and using \eqref{eq:ode1} gives a contradiction. The alternative
\begin{align}\label{eq:contradiction2}
\partial_t\tilde y(\alpha,\tau)
\geq\tilde u(\tilde y(\alpha,\tau),\tau)+2\e_0
\end{align}
is ruled out in the same way by the second inequality in \eqref{eq:claim31}. Therefore \eqref{eq:flowmap} holds for a.e. $t$. Since both sides are locally integrable in time,
\begin{equation}\label{eq:integraltildey}
\tilde y(\alpha,t)=\bar x(\alpha)+
\int_0^t\tilde u(\tilde y(\alpha,s),s)\di s.
\end{equation}
Substituting this identity into \eqref{eq:combine} yields \eqref{eq:energyconserved}. Because
\[
\alpha-\bar x(\alpha)=\int_{-\infty}^{\alpha}f(\eta)\di\eta,
\]
\eqref{eq:energyconserved} is precisely the quantile characterization of
\[
\tilde\nu(t)=\tilde y(\cdot,t)\#[f\di\alpha].
\]
Moreover, if $M=\bar\nu(\mathbb R)$, then \eqref{eq:x1betat} gives
$\beta-M\leq x_1(\beta,t)\leq\beta$, while \eqref{eq:ode1} gives
$|\theta(\alpha,t)-\alpha|\leq t\|\tilde u\|_{C_b(\mathbb R\times[0,t])}$. Hence
$\tilde y(\alpha,t)\to\pm\infty$ as $\alpha\to\pm\infty$, and the continuous nondecreasing map $\tilde y(\cdot,t)$ is surjective.

\textbf{Step 3. Transport of the signed density $\tilde\rho$.}
Fix $\alpha_1<\alpha_2$, write $y_i(t)=\tilde y(\alpha_i,t)$, and define
\begin{equation}\label{eq:massbetweentwo}
\Theta(\alpha_1,\alpha_2,t)
:=\int_{y_1(t)}^{y_2(t)}\tilde\rho(x,t)\di x.
\end{equation}
We claim that
\begin{equation}\label{eq:mass}
\Theta(\alpha_1,\alpha_2,t)
=\Theta(\alpha_1,\alpha_2,0),
\qquad t\geq0.
\end{equation}
Let $\psi\in C^\infty(\mathbb R)$ be nonincreasing, with $\psi=1$ on
$(-\infty,0]$ and $\psi=0$ on $[1,\infty)$, and set
\[
\chi_\e(x,t)
=\psi\left(\frac{x-y_2(t)}\e\right)
-\psi\left(\frac{x-y_1(t)}\e\right).
\]
For $\eta\in C_c^\infty([0,\infty))$, insert
$\phi_\e(x,t)=\eta(t)\chi_\e(x,t)$ into \eqref{eq:weakformula1}. This gives
\begin{equation}\label{eq:testrho}
\int_0^\infty\!\!\int_{\mathbb R}
\tilde\rho\left[\eta'(t)\chi_\e
+\eta(t)(\chi_{\e,t}+\tilde u\chi_{\e,x})\right]\di x\di t
=-\eta(0)\int_{\mathbb R}\bar\rho(x)\chi_\e(x,0)\di x.
\end{equation}
Using $\dot y_i(t)=\tilde u(y_i(t),t)$, the local $1/2$-H\"older continuity of
$\tilde u$, and Cauchy--Schwarz, one obtains, on every bounded time interval,
\begin{equation}\label{eq:testrhoterm}
\left|\int_{\mathbb R}\tilde\rho(x,t)
\frac1\e\psi'\left(\frac{x-y_i(t)}\e\right)
[\tilde u(x,t)-\tilde u(y_i(t),t)]\di x\right|
\leq C_T\|\psi'\|_{L^2}
\|\tilde\rho(\cdot,t)\|_{L^2(y_i(t),y_i(t)+\e)}.
\end{equation}
The right-hand side tends to zero for a.e. $t$ and is bounded in
$L^1(0,T)$ uniformly in $\e$. Dominated convergence in \eqref{eq:testrho}
therefore shows that $\Theta(\alpha_1,\alpha_2,\cdot)$ has zero distributional
derivative.

To complete the proof of \eqref{eq:mass}, we show that $\Theta$ is continuous. If $t_n\to t$, then, with
$I_n=(y_1(t_n),y_2(t_n))$ and $I=(y_1(t),y_2(t))$,
\[
\begin{aligned}
|\Theta(\alpha_1,\alpha_2,t_n)-\Theta(\alpha_1,\alpha_2,t)|
\leq |\langle\tilde\rho(\cdot,t_n)-\tilde\rho(\cdot,t),\mathbf1_I\rangle|+\|\tilde\rho(\cdot,t_n)\|_{L^2}
\|\mathbf1_{I_n}-\mathbf1_I\|_{L^2},
\end{aligned}
\]
and both terms tend to zero by weak continuity in $L^2$, the uniform $L^2$
bound, and the continuity of $y_i$. Hence \eqref{eq:mass} holds for every
$t\geq0$.

Apply the monotone change-of-variables formula at time $t$ and at time zero.
For every pair of rational numbers $\alpha_1<\alpha_2$, \eqref{eq:mass} gives
\[
\int_{\alpha_1}^{\alpha_2}
\tilde\rho(\tilde y(\alpha,t),t)\tilde y_\alpha(\alpha,t)\di\alpha
=\int_{\alpha_1}^{\alpha_2}
\bar\rho(\bar x(\alpha))\bar x'(\alpha)\di\alpha.
\]
Both integrands are locally integrable in the label variable. Since the equality
holds simultaneously on all intervals with rational endpoints, for each fixed
$t\geq0$ it follows that
\begin{equation}\label{eq:tilderhoidentity}
\tilde\rho(\tilde y(\alpha,t),t)\tilde y_\alpha(\alpha,t)
=\bar\rho(\bar x(\alpha))\bar x'(\alpha)
\end{equation}
for a.e. $\alpha\in\mathbb R$. Equivalently, \eqref{eq:tilderhoidentity}
holds for a.e. $(\alpha,t)\in\mathbb R\times(0,\infty)$; by Fubini, it holds
for a.e. $t$ along a.e. fixed label. This spacetime interpretation is the one
used below in deriving the ODE system.

The corresponding push-forward identity is understood locally, or equivalently
in $\mathcal D'(\mathbb R)$: for every $\phi\in C_c^\infty(\mathbb R)$ and every
$t\geq0$,
\[
\int_{\mathbb R}\phi(x)\tilde\rho(x,t)\di x
=\int_{\mathbb R}\phi(\tilde y(\alpha,t))
(\bar\rho\circ\bar x)(\alpha)\bar x'(\alpha)\di\alpha.
\]
The integral on the right is finite because the inverse image of a compact set
under $\tilde y(\cdot,t)$ is bounded and
$(\bar\rho\circ\bar x)\bar x'\in L^1_{\rm loc}(\mathbb R)$. In this precise
sense,
\[
\tilde\rho(x,t)\di x
=\tilde y(\cdot,t)\#[(\bar\rho\circ\bar x)\bar x'\di\alpha]
\]
for every $t\geq0$.

\textbf{Step 4. The conservative case.}
Assume $\lambda=0$. Then $\tilde\mu=\tilde\nu$. At a.e. time, $\tilde\mu(t)$ is absolutely continuous, so Lemma~\ref{lmm:dafermos}, \eqref{eq:energyconserved}, and the push-forward formula for $\tilde\nu$ give
\begin{equation}\label{eq:accelaration}
\begin{aligned}
\frac{\partial^2}{\partial t^2}\tilde y(\alpha,t)
=\frac12\left[\int_{-\infty}^{\tilde y(\alpha,t)}
-\kappa\int_{\mathbb R}\right]\di\tilde\mu(t)=\frac12\left[\int_{-\infty}^{\alpha}
-\kappa\int_{\mathbb R}\right]f(\eta)\di\eta.
\end{aligned}
\end{equation}
Together with $\tilde y(\alpha,0)=\bar x(\alpha)$ and
$\partial_t\tilde y(\alpha,0)=\bar u(\bar x(\alpha))$, this gives \eqref{eq:lambda_dissipation} with $\lambda=0$. Hence $\tilde y=y$, and \eqref{eq:flowmap}, the two push-forward identities, and \eqref{eq:tilderhoidentity} give
$(\tilde u,\tilde\rho,\tilde\mu,\tilde\nu)=(u,\rho,\mu,\nu)$.
\end{proof}

In the remainder of this section, assume $0<\lambda<1$. Set
\[
\tilde U(\alpha,t):=\tilde u(\tilde y(\alpha,t),t),
\qquad
\tilde F(x,t):=\frac12\left[(1-\kappa)\int_{-\infty}^x
-\kappa\int_x^{+\infty}\right]\di\tilde\mu(t).
\]
By Lemma~\ref{lmm:dafermos},
\begin{equation}\label{eq:dtu0}
\frac{\di}{\di t}\tilde U(\alpha,t)
=\tilde F(\tilde y(\alpha,t),t),
\qquad
\tilde U(\alpha,0)=\bar u(\bar x(\alpha)),
\end{equation}
for a.e. $t>0$. At every time at which $\tilde\mu(t)$ is absolutely continuous,
\begin{equation}\label{eq:dtu}
\frac{\di}{\di t}\tilde U(\alpha,t)
=\frac12\left[D_\kappa^{-1}(\tilde u_x^2+\tilde\rho^2)\right](\tilde y(\alpha,t),t).
\end{equation}
To identify $\tilde y$ with \eqref{eq:lambda_dissipation}, it is enough to prove
\begin{equation}\label{eq:accelaration2}
\frac{\partial^2}{\partial t^2}\tilde y(\alpha,t)
=\frac12\left[\int_{-\infty}^{\alpha}-\kappa\int_{\mathbb R}\right]
f(\eta)[1-\lambda\mathbf1_{J_t}(\eta)]\di\eta
\end{equation}
for a.e. $t>0$. Equivalently, it is enough to establish
\begin{align}\label{eq:energyconserved11}
\tilde\mu(t)((-\infty,\tilde y(\alpha,t)))
\leq\int_{-\infty}^{\alpha}f(\eta)[1-\lambda\mathbf1_{J_t}(\eta)]\di\eta
\leq\tilde\mu(t)((-\infty,\tilde y(\alpha,t)]).
\end{align}

\subsection{Set of singularities}\label{subsec:singularset}
We next derive the equations satisfied by the spatial derivatives of the canonical
flow. For a.e. $(\alpha,t)$, set
\[
q(\alpha,t):=\tilde y_\alpha(\alpha,t),
\qquad
p(\alpha,t):=\tilde U_\alpha(\alpha,t),
\qquad
r(\alpha):=\bar\rho(\bar x(\alpha))\bar x'(\alpha).
\]
The proof of Lemma~\ref{lmm:ODEs} below constructs representatives of $q$ and
$p$ which are continuous, respectively locally Lipschitz, in time for a.e.
label $\alpha$; these representatives are used from that point onward. Whenever
$q(\alpha,t)>0$, define
\begin{equation}\label{eq:wrhoalpha}
w_\alpha(t):=\frac{p(\alpha,t)}{q(\alpha,t)}
=\tilde u_x(\tilde y(\alpha,t),t),
\qquad
\rho_\alpha(t):=\frac{r(\alpha)}{q(\alpha,t)}
=\tilde\rho(\tilde y(\alpha,t),t).
\end{equation}
For $\alpha\in B_0^L$, define
\begin{equation}\label{eq:blowuptime}
T_\alpha:=
\begin{cases}
-\dfrac{2}{\bar u_x(\bar x(\alpha))},
&\bar u_x(\bar x(\alpha))<0
\quad\text{and}\quad
\bar\rho(\bar x(\alpha))=0,\\[2mm]
+\infty,&\text{otherwise}.
\end{cases}
\end{equation}

\begin{lemma}\label{lmm:ODEs}
For a.e. $\alpha\in\mathbb R$, the functions $q(\alpha,\cdot)$ and
$p(\alpha,\cdot)$ admit representatives such that $q$ is locally absolutely
continuous, $p$ is locally Lipschitz, and $q(\alpha,t)\geq0$ for every $t\geq0$.
On every connected component of
\[
\{t>0:q(\alpha,t)>0\},
\]
the functions $w_\alpha$ and $\rho_\alpha$ are locally absolutely continuous and
\begin{equation}\label{eq:ODEs}
\left\{
\begin{aligned}
\dot w_\alpha(t)+\frac12w_\alpha^2(t)&=\frac12\rho_\alpha^2(t),\\
\dot\rho_\alpha(t)+\rho_\alpha(t)w_\alpha(t)&=0.
\end{aligned}
\right.
\end{equation}
For a.e. $\alpha\in B_0^L$, on the component whose closure contains $t=0$,
\begin{equation}\label{eq:ODEinitial}
\left\{
\begin{aligned}
w_\alpha(0)&=\bar u_x(\bar x(\alpha)),\\
\rho_\alpha(0)&=\bar\rho(\bar x(\alpha)).
\end{aligned}
\right.
\end{equation}
\end{lemma}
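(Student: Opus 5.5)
We obtain the equations for $q$ and $p$ by differentiating in the label variable the two integral identities we already have for $\tilde y$ and $\tilde U$. By \eqref{eq:integraltildey},
\[
\tilde y(\alpha,t)=\bar x(\alpha)+\int_0^t\tilde U(\alpha,s)\di s .
\]
By \eqref{eq:dtu0},
\[
\tilde U(\alpha,t)=\bar u(\bar x(\alpha))+\int_0^t\tilde F(\tilde y(\alpha,s),s)\di s .
\]
Both maps are Lipschitz in $\alpha$: for $\tilde y$ this is \eqref{eq:Liptildey}, and for $\tilde U$ it follows from \eqref{eq:vxbeta} composed with $\theta$. Hence $q,p$ exist a.e. and are locally bounded, and differentiating in $\alpha$ inside the time integrals (legitimate for difference quotients by Fubini and dominated convergence) gives
\[
q(\alpha,t)=\bar x'(\alpha)+\int_0^t p(\alpha,s)\di s .
\]
For $p$ we use that $\tilde F(\cdot,s)$ is a cumulative distribution of $\tilde\mu(s)$. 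At absolutely continuous times $s\notin T_s$, $\partial_x\tilde F=\tfrac12(\tilde u_x^2+\tilde\rho^2)$, so $\partial_\alpha[\tilde F(\tilde y(\alpha,s),s)]=\tfrac12(\tilde u_x^2+\tilde\rho^2)(\tilde y,s)\,q$ for a.e. $\alpha$. This yields
\[
p(\alpha,t)=\bar u_x(\bar x(\alpha))\bar x'(\alpha)+\frac12\int_0^t (\tilde u_x^2+\tilde\rho^2)(\tilde y(\alpha,s),s)\,q(\alpha,s)\di s .
\]
To make this rigorous we integrate over label intervals $[\alpha_1,\alpha_2]$ with rational endpoints, where the increment of $\tilde F$ is $\tfrac12\tilde\mu(s)((\tilde y(\alpha_1,s),\tilde y(\alpha_2,s)])$. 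The integrand is then the label density of the push-forward of $\tilde\mu(s)$, and Fubini over rational intervals recovers the a.e.\ identity, giving the representatives. The integrand is locally integrable in $s$ and uniformly bounded in $\alpha$: by \eqref{eq:energyconserved}, $\tilde\mu\le\tilde\nu$ and the push-forward of $f\,\di\alpha$, its label density is at most $f\le 1$. So $p$ is locally Lipschitz and $q$ is locally absolutely continuous ($C^1$ actually). Nonnegativity $q\ge0$ follows because $\tilde y(\cdot,t)$ is nondecreasing, and the continuous representative preserves this.

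On a component of $\{q>0\}$, we rewrite the integrand via \eqref{eq:wrhoalpha}, using $\tilde u_x=p/q$ and $\tilde\rho=r/q$; \eqref{eq:tilderhoidentity} gives the latter for a.e.\ $(\alpha,t)$. The system becomes
\[
\dot q=p,\qquad \dot p=\frac{p^2+r^2}{2q},
\]
valid for a.e.\ $t$ and a.e.\ fixed $\alpha$ by Fubini. Since $q>0$ is continuous there, $w=p/q$ and $\rho=r/q$ are locally absolutely continuous. The quotient rule gives $\dot w=\dot p/q-p^2/q^2=\tfrac12(\rho^2-w^2)$ and $\dot\rho=-rp/q^2=-\rho w$, which is \eqref{eq:ODEs}.

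For \eqref{eq:ODEinitial}, on $B_0^L$ we have $q(\alpha,0)=\bar x'(\alpha)>0$, so the component containing $0$ starts at $0$. There $w(0)=\bar u_x(\bar x)\bar x'/\bar x'$ and $\rho(0)=\bar\rho(\bar x)\bar x'/\bar x'$.

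The main obstacle is identifying the label-derivative of $\tilde F(\tilde y(\alpha,s),s)$ with $\tfrac12(\tilde u_x^2+\tilde\rho^2)q$ for a.e.\ $(\alpha,s)$. This requires that the singular part of $\tilde\mu(s)$ carries no mass at a.e.\ time, which holds since $T_s$ is countable by Definition~\ref{def:weak}(ii). On the Lebesgue-null image of $\{q=0\}$ the density of $\tilde\mu_{ac}$ must also contribute nothing; this follows from the area formula, since $\tilde\mu_{ac}(s)$ pulls back to $(\tilde u_x^2+\tilde\rho^2)(\tilde y)\,q\,\di\alpha$.
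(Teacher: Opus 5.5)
Your argument is correct, and it has the same overall architecture as the paper's proof:
\begin{itemize}
\item Lipschitz bounds in $\alpha$;
\item label-differentiation of \eqref{eq:integraltildey} and \eqref{eq:dtu0};
\item the bound by $f\le 1$ on the $\tilde\mu$-density between characteristics;
\item the quotient rule on positivity components.
\end{itemize}

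The difference lies in how you build the time representative of $p$ and compute $\partial_t p$.

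The paper works with the difference quotients $p_h$. It shows they are $\tfrac12$-Lipschitz in $t$ uniformly in $h$, and obtains $p(\alpha,\cdot)$ as a uniform limit along a fixed sequence $h_n\downarrow0$ from convergence at rational times. It then identifies $\partial_t p=\tfrac12 e(\tilde y,t)q$ only on $\{q>0\}$, by a one-sided Lebesgue-point argument at the moving point $\tilde y(\alpha,t)$.

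You instead use the area formula to convert the $\tilde\mu(s)$-mass between two characteristics into the label integral $\int_{\alpha_1}^{\alpha_2}e(\tilde y,s)q\,\di\alpha$. Applying Fubini on rational label intervals then gives directly
\[
p(\alpha,t)=p(\alpha,0)+\tfrac12\int_0^t e(\tilde y(\alpha,s),s)\,q(\alpha,s)\,\di s
\]
for a.e.\ $\alpha$ and all $t$. Compared with the paper's route, this:
\begin{itemize}
\item is shorter;
\item dispenses with the equicontinuity and Lebesgue-point-along-characteristics steps;
\item holds on all of $[0,\infty)$, with $\partial_t p=0$ a.e.\ on $\{q=0\}$, rather than only on positivity components;
\item yields the sharper bound $0\le\partial_t p\le\tfrac12 f(\alpha)$.
\end{itemize}

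In exchange, you must supply two facts the paper's route handles differently:
\begin{itemize}
\item \emph{Joint measurability} of $(\alpha,s)\mapsto e(\tilde y(\alpha,s),s)\,q(\alpha,s)$. Take a Borel representative of $e$ and use that $q=0$ a.e.\ on $\tilde y(\cdot,s)$-preimages of Lebesgue-null sets.
\item \emph{The pointwise bound} $e(\tilde y,s)\,q\le f$. This follows because at absolutely continuous times \eqref{eq:energyconserved} gives $\tilde\nu(s)((\tilde y(\alpha_1,s),\tilde y(\alpha_2,s)])=\int_{\alpha_1}^{\alpha_2}f\,\di\alpha$.
\end{itemize}
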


\begin{proof}
For each bounded time interval, the maps
$\alpha\mapsto\tilde y(\alpha,t)$ and
$\alpha\mapsto\tilde U(\alpha,t)$ are uniformly locally Lipschitz. The first
assertion follows from \eqref{eq:Liptildey}; for the second, \eqref{eq:vxbeta}
and the estimate for $\theta$ give
\[
|\tilde U(\beta,t)-\tilde U(\eta,t)|
\leq\frac12e^{t/2}|\beta-\eta|.
\]
Differentiating \eqref{eq:integraltildey} with respect to $\alpha$ in the sense
of distributions and applying Fubini gives, for a.e. $\alpha$,
\begin{equation}\label{eq:hp}
q(\alpha,t)=\bar x'(\alpha)+\int_0^t p(\alpha,s)\di s,
\qquad \partial_tq(\alpha,t)=p(\alpha,t).
\end{equation}
We use the first identity to define the time-continuous representative of $q$.
For every fixed $t$, it agrees a.e. in $\alpha$ with
$\tilde y_\alpha(\alpha,t)$. Intersecting the corresponding full-measure sets
at all rational times, and then using the continuity of
$t\mapsto q(\alpha,t)$, yields a single full-measure set of labels on which
$q(\alpha,t)\geq0$ for every $t\geq0$.

It remains to construct the time representative of $p$ and identify its
derivative. Fix $T>0$, choose once and for all a sequence $h_n\downarrow0$, and
put, for $h>0$,
\[
p_h(\alpha,t):=
\frac{\tilde U(\alpha+h,t)-\tilde U(\alpha,t)}{h}.
\]
For a.e. $t$, the measure $\tilde\mu(t)$ is absolutely continuous and
\eqref{eq:dtu0} yields
\[
\partial_t p_h(\alpha,t)
=\frac{1}{2h}
\int_{\tilde y(\alpha,t)}^{\tilde y(\alpha+h,t)}
(\tilde u_x^2+\tilde\rho^2)(x,t)\di x.
\]
The integrand is nonnegative. Moreover, using
$\tilde\mu\leq\tilde\nu$, the monotonicity of $\tilde y$, and
$\tilde\nu(t)=\tilde y(\cdot,t)\#[f\di\alpha]$, we obtain
\[
0\leq\partial_t p_h(\alpha,t)
\leq\frac{1}{2h}\int_\alpha^{\alpha+h}f(\eta)\di\eta
\leq\frac12
\]
for a.e. $t$, because $0\leq f\leq1$. Thus $p_h(\alpha,\cdot)$ is
$1/2$-Lipschitz, uniformly in $h$.

At every rational time $t\in[0,T]$, Lebesgue differentiation in the label
variable gives
$p_{h_n}(\alpha,t)\to\tilde U_\alpha(\alpha,t)$ for a.e. $\alpha$.
After intersecting these countably many full-measure sets, fix such an
$\alpha$. The uniform Lipschitz bound and convergence on the dense set of
rational times imply that $p_{h_n}(\alpha,\cdot)$ is uniformly Cauchy on
$[0,T]$; hence it converges uniformly to a $1/2$-Lipschitz function, denoted by
$p(\alpha,\cdot)$. For each fixed $t$, the forward difference quotients of the
Lipschitz function $\tilde U(\cdot,t)$ converge to
$\tilde U_\alpha(\cdot,t)$ in $L^1_{\rm loc}$ as $h_n\downarrow0$.
Since the same sequence converges pointwise to the preceding uniform limit on
the chosen full-measure set, uniqueness of the $L^1_{\rm loc}$ limit shows
that $p(\alpha,t)=\tilde U_\alpha(\alpha,t)$ for a.e. $\alpha$, for each fixed
$t\in[0,T]$. Taking $T\in\mathbb N$ and intersecting the resulting countable
full-measure sets constructs a common time representative on $[0,\infty)$.

Set $e(x,t)=\tilde u_x^2(x,t)+\tilde\rho^2(x,t)$. By Fubini's theorem, the
one-dimensional area formula, and the Lebesgue differentiation theorem, after
removing another null set of labels one has, for a.e. $t$ such that
$q(\alpha,t)>0$,
\[
\lim_{n\to\infty}\frac1{h_n}
\int_{\tilde y(\alpha,t)}^{\tilde y(\alpha+h_n,t)}e(x,t)\di x
=e(\tilde y(\alpha,t),t)q(\alpha,t).
\]
Indeed, the quotient of the lengths converges to $q(\alpha,t)$, and
$\tilde y(\alpha,t)$ is a one-sided Lebesgue point of $e(\cdot,t)$ for a.e.
$(\alpha,t)$ on $\{q>0\}$; the latter assertion follows from the area formula
because the set of non-Lebesgue points of $e(\cdot,t)$ has zero Lebesgue
measure.

Let $[c,d]$ be a compact subinterval of a connected component of
$\{t:q(\alpha,t)>0\}$. Integrating the equation for $p_{h_n}$ gives
\[
p_{h_n}(\alpha,d)-p_{h_n}(\alpha,c)
=\frac12\int_c^d\frac1{h_n}
\int_{\tilde y(\alpha,t)}^{\tilde y(\alpha+h_n,t)}e(x,t)\di x\di t.
\]
The integrand on the right is bounded by
$h_n^{-1}\int_\alpha^{\alpha+h_n}f\leq1$. Hence the locally uniform
convergence of $p_{h_n}$ and dominated convergence imply
\begin{equation}\label{eq:omegat}
\partial_t p(\alpha,t)
=\frac12e(\tilde y(\alpha,t),t)q(\alpha,t)
=\frac{p^2(\alpha,t)+r^2(\alpha)}{2q(\alpha,t)}
\quad\text{on }\{q>0\}.
\end{equation}
The last identity follows from the Sobolev chain rule
$p=\tilde u_x(\tilde y,t)q$ and from \eqref{eq:tilderhoidentity}. In particular,
$p$ is locally absolutely continuous on each positivity component. The
right-hand side is locally integrable; more precisely, for every bounded label
interval $K$ and every $T>0$, the area formula gives
\[
\int_0^T\int_K\mathbf1_{\{q>0\}}
\frac{p^2+r^2}{q}\di\alpha\di t
=\int_0^T\int_K\mathbf1_{\{q>0\}}e(\tilde y,t)q\di\alpha\di t
\leq T\bar\nu(\mathbb R).
\]

On a positivity component, divide \eqref{eq:hp} and \eqref{eq:omegat} by
$q$ and use \eqref{eq:wrhoalpha}. This gives \eqref{eq:ODEs}. For a.e.
$\alpha\in B_0^L$, on the initial positivity component,
\begin{equation}\label{eq:omegatbelow}
\dot w_\alpha(t)\geq-\frac12w_\alpha^2(t),
\qquad
w_\alpha(t)\geq\frac{2w_\alpha(0)}{2+t w_\alpha(0)}
\end{equation}
for as long as $2+t w_\alpha(0)>0$. Since $q_t=w_\alpha q$, integration yields
\begin{equation}\label{eq:lowertildey}
q(\alpha,t)
=q(\alpha,0)\exp\left(\int_0^tw_\alpha(s)\di s\right)
\geq\frac{q(\alpha,0)}4[2+t w_\alpha(0)]^2
\end{equation}
under the same condition.

Finally, for a.e. $\alpha\in B_0^L$,
\[
q(\alpha,0)=\bar x'(\alpha),
\qquad
p(\alpha,0)=(\bar u\circ\bar x)'(\alpha)
=\bar u_x(\bar x(\alpha))\bar x'(\alpha),
\]
and $r(\alpha)=\bar\rho(\bar x(\alpha))\bar x'(\alpha)$. Dividing by
$q(\alpha,0)>0$ proves \eqref{eq:ODEinitial}.
\end{proof}

With the preceding lemma, the first breaking time is determined entirely by the initial data.

\begin{proposition}\label{eq:Jt_tildeAt}
Let $(\tilde u,\tilde\rho,\tilde\mu,\tilde\nu)$ and $\tilde y$ be as in Theorem~\ref{thm:uniqueness1}. Then, with all sets understood modulo $f(\alpha)\di\alpha$-null sets,
\begin{equation}\label{eq:tildejt}
\tilde J_t
:=\{\alpha\in\mathbb R:q(\alpha,s)=0
\text{ for some }0\leq s\leq t\}
=J_t,
\end{equation}
where $J_t$ is defined by \eqref{eq:blowupsets}.
\end{proposition}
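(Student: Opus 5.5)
The plan is to split the label line into $A_0^L$ and $B_0^L$ and, on $B_0^L$, to solve the ODE system of Lemma~\ref{lmm:ODEs} explicitly on the initial positivity component of $q(\alpha,\cdot)$.

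\emph{Case $A_0^L$.} Here $q(\alpha,0)=\bar x'(\alpha)=0$, so $\alpha\in\tilde J_t$ for every $t\geq0$. Also $A_0^L\subset J_t$ by definition. Hence the two sets agree on $A_0^L$.

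\emph{Case $B_0^L$, reduction to a complex Riccati equation.} Fix a label $\alpha\in B_0^L$ for which Lemma~\ref{lmm:ODEs} applies, so that $q(\alpha,0)=\bar x'(\alpha)>0$. Let $[0,t^*)$ be the initial positivity component, where $t^*\in(0,\infty]$ is the first zero of $q(\alpha,\cdot)$. Set $Z(t):=w_\alpha(t)+i\rho_\alpha(t)$. By \eqref{eq:ODEs},
\[
\dot Z=\tfrac12(\rho_\alpha^2-w_\alpha^2)-i\rho_\alpha w_\alpha=-\tfrac12Z^2 .
\]
Therefore $Z(t)=2Z_0/(2+tZ_0)$ with $Z_0=\bar u_x(\bar x(\alpha))+i\bar\rho(\bar x(\alpha))$ from \eqref{eq:ODEinitial}, as long as $2+tZ_0\neq0$. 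Since $\dot q=w_\alpha q$ from \eqref{eq:hp}, integrating $\mathrm{Re}\,Z$ gives
\[
q(\alpha,t)=\bar x'(\alpha)\,\frac{|2+tZ_0|^2}{4}
=\bar x'(\alpha)\Big(\big[1+\tfrac t2\bar u_x(\bar x(\alpha))\big]^2+\tfrac{t^2}4\bar\rho^2(\bar x(\alpha))\Big)
\quad\text{on }[0,t^*).
\]
This reproduces \eqref{eq:separation} for labels not yet in $J_t$. By continuity of $q(\alpha,\cdot)$, $t^*$ is the first zero of the right-hand side, and that right-hand side vanishes exactly when $\bar\rho(\bar x(\alpha))=0$ and $t=-2/\bar u_x(\bar x(\alpha))$, i.e.\ $t^*=T_\alpha$. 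Consequently, if $T_\alpha=\infty$ then $q(\alpha,\cdot)>0$ for all time, so $\alpha\notin\tilde J_t$ and $\alpha\notin J_t$. If $T_\alpha<\infty$, then $\alpha\in\tilde J_t$ iff $T_\alpha\leq t$ iff $\alpha\in I_t\subset J_t$. Together with the $A_0^L$ case this gives \eqref{eq:tildejt}.

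\emph{Main obstacle.} The delicate step is justifying that the explicit formula holds on the whole maximal interval $[0,t^*)$ and that no earlier zero of $q$ occurs. The ODEs of Lemma~\ref{lmm:ODEs} hold only on positivity components, and $q$ is only the chosen absolutely continuous representative. I would argue as follows.
\begin{enumerate}
\item On $[0,t^*)$ the functions $w_\alpha,\rho_\alpha$ are absolutely continuous and satisfy the Riccati equation. Its solution is unique while it stays finite.
\item For $t<T_\alpha$ the explicit $Z$ stays bounded, since $|2+tZ_0|$ is bounded away from zero on compact subsets of $[0,T_\alpha)$.
\item The ODE $\dot q=w_\alpha q$ with $q(0)>0$ then forces $q>0$ on $[0,T_\alpha)$, so $t^*\geq T_\alpha$.
\item Conversely, if $t^*>T_\alpha$ with $T_\alpha<\infty$, continuity of $q$ together with the formula gives $q(\alpha,T_\alpha)=0$. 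This contradicts the choice of $t^*$ as the first zero.
\end{enumerate}
Two further points need care. First, the "a.e.\ $\alpha$" qualifiers, including the requirement that the labels be $f\,\di\alpha$-typical, must be managed consistently. Second, on $\{\bar\rho=0\}$ the system reduces to the scalar Riccati equation, and on $\{\bar\rho\neq0\}$ one has $|2+tZ_0|>0$ for every $t$; both are covered by the same formula.
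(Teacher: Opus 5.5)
Your proof is correct and follows essentially the same route as the paper. You reduce the system to the complex Riccati equation $\dot Z=-\tfrac12 Z^2$; you use $w_\alpha+i\rho_\alpha$, the paper uses its conjugate, and this makes no difference. You then integrate $q_t=w_\alpha q$ to get $q(\alpha,t)=\bar x'(\alpha)D_\alpha(t)$ on the initial positivity component, use continuity of $q(\alpha,\cdot)$ to identify its first zero with $T_\alpha$, and dispose of $A_0^L$ via $q(\alpha,0)=\bar x'(\alpha)=0$.
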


\begin{proof}
Fix a.e. $\alpha\in B_0^L$ and consider the initial positivity component. Introduce
\[
z_\alpha(t):=w_\alpha(t)-i\rho_\alpha(t).
\]
By \eqref{eq:ODEs},
\[
\dot z_\alpha+\frac12z_\alpha^2=0,
\qquad
z_\alpha(0)=w_\alpha(0)-i\rho_\alpha(0),
\]
and hence
\[
z_\alpha(t)=\frac{z_\alpha(0)}{1+\frac t2z_\alpha(0)}.
\]
Writing
\[
D_\alpha(t)
:=\left[1+\frac t2w_\alpha(0)\right]^2
+\frac{t^2}{4}\rho_\alpha^2(0),
\]
we obtain
\begin{equation}\label{eq:explicit-solution}
w_\alpha(t)
=\frac{w_\alpha(0)+\frac t2[w_\alpha^2(0)+\rho_\alpha^2(0)]}{D_\alpha(t)},
\qquad
\rho_\alpha(t)=\frac{\rho_\alpha(0)}{D_\alpha(t)}.
\end{equation}
Integrating $q_t=w_\alpha q$ gives
\begin{equation}\label{eq:tildeyalpha}
q(\alpha,t)
=\bar x'(\alpha)D_\alpha(t)
=\bar x'(\alpha)\left(
\left[1+\frac t2\bar u_x(\bar x(\alpha))\right]^2
+\frac{t^2}{4}\bar\rho^2(\bar x(\alpha))\right).
\end{equation}
This identity holds on the maximal initial positivity component. If that component had a finite right endpoint at which $D_\alpha>0$, the continuity of $q(\alpha,\cdot)$ would extend positivity beyond the endpoint, a contradiction. Therefore the first zero exists precisely when
\[
\bar u_x(\bar x(\alpha))<0,
\qquad
\bar\rho(\bar x(\alpha))=0,
\]
and then it occurs at $T_\alpha=-2/\bar u_x(\bar x(\alpha))$. Conversely, \eqref{eq:tildeyalpha} is strictly positive before $T_\alpha$.

For $\alpha\in A_0^L$, $q(\alpha,0)=\bar x'(\alpha)=0$, so such a label belongs to $\tilde J_t$ for every $t>0$. Combining these facts with \eqref{eq:blowupsets} proves \eqref{eq:tildejt}.
\end{proof}

For later use, define
\begin{equation}\label{eq:St}
S_t:=\{\alpha\in\mathbb R:q(\alpha,t)=0\},
\qquad
S_t\subset\tilde J_t=J_t.
\end{equation}

\subsection{Inequality \eqref{eq:energyconserved11} and uniqueness}\label{subsec:accumultation}
For $t>0$, define the Eulerian outgoing region
\[
\Omega_t:=\{x\in\mathbb R:
\tilde\rho(x,t)=0,
\ \tilde u_x(x,t)\geq2/t\}.
\]
The exact Eulerian dissipation rule in Definition~\ref{def:weak} reads
\begin{equation}\label{eq:upbound}
\di(\tilde\nu-\tilde\mu)(t)
=\frac{\lambda}{1-\lambda}\tilde u_x^2(x,t)
\mathbf1_{\Omega_t}(x)\di x
+\lambda\di\tilde\nu_s(t).
\end{equation}

\begin{proposition}\label{pro:B}
For a.e. $t>0$,
\begin{equation}\label{eq:weakmeas}
\tilde\nu(t)-\tilde\mu(t)
=\lambda\tilde y(\cdot,t)\#
[f(\alpha)\mathbf1_{\tilde J_t}(\alpha)\di\alpha].
\end{equation}

\end{proposition}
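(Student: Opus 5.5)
The plan is to work at a fixed time $t>0$ with $t\notin T_s$; this excludes only a countable set by Definition~\ref{def:weak}(ii). At such a time $\tilde\nu_s(t)=0$, so the Eulerian rule \eqref{eq:upbound} reduces to
\[
\di(\tilde\nu-\tilde\mu)(t)=\frac{\lambda}{1-\lambda}\tilde u_x^2\mathbf 1_{\Omega_t}\di x .
\]
On $\Omega_t$ we have $\tilde\rho=0$ and $\di\tilde\mu_{ac}=\tilde u_x^2\di x$. Hence the density $g$ of $\tilde\nu(t)$ satisfies $g=\tilde u_x^2+\frac{\lambda}{1-\lambda}\tilde u_x^2=\frac1{1-\lambda}\tilde u_x^2$ a.e. on $\Omega_t$. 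This gives
\[
\tilde\nu(t)-\tilde\mu(t)=\lambda\,\tilde\nu(t)|_{\Omega_t}.
\]
By Theorem~\ref{thm:uniqueness1}, $\tilde\nu(t)=\tilde y(\cdot,t)\#[f\di\alpha]$, and therefore
\[
\tilde\nu(t)-\tilde\mu(t)=\lambda\,\tilde y(\cdot,t)\#\big[f\,\mathbf 1_{\tilde y(\cdot,t)^{-1}(\Omega_t)}\di\alpha\big].
\]
So \eqref{eq:weakmeas} reduces to a phase identification:
\[
\tilde y(\cdot,t)^{-1}(\Omega_t)=\tilde J_t\quad\text{modulo }f\di\alpha\text{-null sets}.
\]

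First I discard the labels in $S_t$. The image $\tilde y(S_t,t)$ is Lebesgue-null by the area formula. Since $\tilde\nu(t)$ is absolutely continuous, $\int_{S_t}f\di\alpha=\tilde\nu(t)(\tilde y(S_t,t))=0$. So it suffices to treat labels with $q(\alpha,t)>0$. For a.e. such label, the chain rule and \eqref{eq:tilderhoidentity} give $\tilde u_x(\tilde y(\alpha,t),t)=w_\alpha(t)$ and $\tilde\rho(\tilde y(\alpha,t),t)=\rho_\alpha(t)$, where $w_\alpha,\rho_\alpha$ are as in \eqref{eq:wrhoalpha}. I then split into two cases according to Proposition~\ref{eq:Jt_tildeAt}.

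\emph{Case $\alpha\notin\tilde J_t=J_t$.} Here $t$ lies in the initial positivity component, so the explicit formulas \eqref{eq:explicit-solution} apply. If $\rho_\alpha(t)=0$, then $\bar\rho(\bar x(\alpha))=0$. If moreover $w_0:=\bar u_x(\bar x(\alpha))<0$, then $1+tw_0/2>0$ and so $w_\alpha(t)<0$. If instead $w_0\ge0$, then $w_\alpha(t)=w_0/(1+tw_0/2)<2/t$. In either subcase $\tilde y(\alpha,t)\notin\Omega_t$. This is the same computation as in the proof of Theorem~\ref{thm:measure}(v).

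\emph{Case $\alpha\in J_t$ with $q(\alpha,t)>0$.} Then $r(\alpha)=0$: either $\bar x'(\alpha)=0$ on $A_0^L$, or $\bar\rho(\bar x(\alpha))=0$. Hence $\rho_\alpha\equiv0$ on every positivity component, and \eqref{eq:ODEs} becomes the Riccati equation $\dot w_\alpha=-\tfrac12 w_\alpha^2$. Let $(c,d)\ni t$ be the positivity component containing $t$. Its left endpoint satisfies $q(\alpha,c)=0$ with $0\le c<t$: for $A_0^L$ one has $c\ge0$ since $q(\alpha,0)=0$, and otherwise $c\ge T_\alpha$ by Proposition~\ref{eq:Jt_tildeAt}. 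The solutions of the Riccati equation on $(c,t]$ are $w\equiv0$ or $w=2/(s-c')$ with $c'\le c$. Since $q_s=w_\alpha q$, we get $\log q(\alpha,s)\to-\infty$ as $s\downarrow c$, which forces $\int_c^{t}w_\alpha=+\infty$. This excludes $w\equiv0$ and every $c'<c$, so $w_\alpha(s)=2/(s-c)$. Thus $w_\alpha(t)=2/(t-c)\ge 2/t$ and $\rho_\alpha(t)=0$, so $\tilde y(\alpha,t)\in\Omega_t$.

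Combining the two cases gives the identification, and hence \eqref{eq:weakmeas}. I expect the main obstacle to be this second case. The ODE of Lemma~\ref{lmm:ODEs} must be used on a later positivity component that starts from $q=0$, and the only information there is the blow-up of $\log q$ at the left endpoint; this is what rules out the non-singular Riccati branches. A further technical point is that the various a.e.-in-$\alpha$ statements (the time representatives of $q$ and $p$, the chain rule, and \eqref{eq:tilderhoidentity}) must all hold at the single fixed time $t$. I would arrange this by choosing $t$ outside a null set of times, using the spacetime a.e.\ formulations together with Fubini. The weaker conclusion ``for a.e.\ $t$'' in the statement is what allows this.
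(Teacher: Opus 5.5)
Your argument is correct and follows the paper's proof. Both reduce \eqref{eq:weakmeas} to the phase identity $\mathbf 1_{\Omega_t}(\tilde y(\alpha,t))=\mathbf 1_{\tilde J_t}(\alpha)$ for $f\,\di\alpha$-a.e.\ $\alpha\in S_t^c$, using the explicit formulas \eqref{eq:explicit-solution} for unbroken labels and the outgoing branch $w_\alpha(s)=2/(s-a)$ for broken ones.

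The differences are cosmetic. You discard the countable set $T_s$, so that $\int_{S_t}f\,\di\alpha=0$, instead of tracking the singular part $\lambda\tilde\nu_s(t)=\lambda\tilde y(\cdot,t)\#[f\mathbf 1_{S_t}\di\alpha]$ together with $S_t\subset\tilde J_t$. You also obtain $w_\alpha(s)=2/(s-c)$ from the Riccati solution family and $\log q\to-\infty$ at $c$, rather than from $Q=\sqrt q$ with $Q_{tt}=0$. Your list of Riccati solutions omits the negative branches $2/(s-c')$ with $c'>t$, but the same divergence argument excludes them.
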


\begin{proof}
Fix a time $t>0$ in the full-measure set on which the identities in Lemma~\ref{lmm:ODEs}, the Sobolev chain rules, and the decomposition of the monotone push-forward hold. We first prove the phase identity
\begin{equation}\label{eq:key1}
\mathbf1_{\Omega_t}(\tilde y(\alpha,t))
=\mathbf1_{\tilde J_t}(\alpha)
\qquad
\text{for }f(\alpha)\di\alpha\text{-a.e. }\alpha\in S_t^c.
\end{equation}

Assume first that $\alpha\notin\tilde J_t$. Then $\alpha\in B_0^L$, the initial positivity component contains $[0,t]$, and \eqref{eq:explicit-solution}--\eqref{eq:tildeyalpha} give
\begin{equation}\label{eq:Tstime}
q(\alpha,t)=\bar x'(\alpha)D_\alpha(t),
\qquad
w_\alpha(t)
=\frac{w_\alpha(0)+\frac t2[w_\alpha^2(0)+\rho_\alpha^2(0)]}{D_\alpha(t)},
\qquad
\rho_\alpha(t)=\frac{\rho_\alpha(0)}{D_\alpha(t)}.
\end{equation}
Thus the following alternatives exhaust the possibilities:
\begin{equation}\label{eq:negative}
\rho_\alpha(t)\neq0,
\quad\text{or}\quad
w_\alpha(t)<0,
\quad\text{or}\quad
0\leq w_\alpha(t)<\frac2t.
\end{equation}
Indeed,
\begin{equation}\label{eq1}
\rho_\alpha(0)\neq0
\quad\Longrightarrow\quad
\rho_\alpha(t)\neq0,
\end{equation}
while, if $\rho_\alpha(0)=0$ and $w_\alpha(0)<0$, then
\begin{equation}\label{eq2}
w_\alpha(t)=\frac{2}{t-T_\alpha}<0
\qquad(t<T_\alpha).
\end{equation}
If $\rho_\alpha(0)=w_\alpha(0)=0$, then
\begin{equation}\label{eq3}
w_\alpha(t)=0,
\end{equation}
and if $\rho_\alpha(0)=0$ and $w_\alpha(0)>0$, then
\begin{equation}\label{eq4}
w_\alpha(t)
=\frac{2w_\alpha(0)}{2+t w_\alpha(0)}
<\frac2t.
\end{equation}
Consequently,
\begin{equation}\label{eq:Tstime1}
\alpha\notin\tilde J_t,
\quad q(\alpha,t)>0
\quad\Longrightarrow\quad
\tilde y(\alpha,t)\notin\Omega_t.
\end{equation}

Now assume $\alpha\in\tilde J_t\cap S_t^c$, and let $(a,b)$ be the connected component of
$\{s>0:q(\alpha,s)>0\}$ containing $t$. Since the continuous representative of
$q$ has already reached zero,
\begin{equation}\label{eq:12equal1}
0\leq a<t,
\qquad
q(\alpha,a)=0,
\qquad
q(\alpha,s)>0\quad(a<s<b).
\end{equation}
If $\alpha\in A_0^L$, then $r(\alpha)=0$. If $\alpha\in B_0^L$,
Proposition~\ref{eq:Jt_tildeAt} shows that breaking is possible only when
$\bar\rho(\bar x(\alpha))=0$, and again $r(\alpha)=0$. Hence
\begin{equation}\label{eq:12equal2}
\rho_\alpha(s)=0,
\qquad a<s<b.
\end{equation}
By \eqref{eq:hp} and \eqref{eq:omegat}, $q\in W^{2,1}_{\rm loc}(a,b)$ and
\[
q_{tt}=\frac{q_t^2}{2q}
\quad\text{a.e. on }(a,b).
\]
Consequently $Q=\sqrt q$ belongs to $W^{2,1}_{\rm loc}(a,b)$ and
$Q_{tt}=0$ a.e. Since $Q$ extends continuously to $a$, $Q(a)=0$, and
$Q>0$ on $(a,b)$, it follows that $Q(s)=c_\alpha(s-a)$ with
$c_\alpha>0$. Thus, with $C_\alpha=c_\alpha^2$,
\begin{equation}\label{eq:12equal3}
q(\alpha,s)=C_\alpha(s-a)^2,
\qquad
w_\alpha(s)=\frac{q_t(\alpha,s)}{q(\alpha,s)}
=\frac2{s-a}.
\end{equation}
In particular, such a positivity component cannot have a finite right endpoint,
and
\[
\rho_\alpha(t)=0,
\qquad
w_\alpha(t)=\frac2{t-a}\geq\frac2t.
\]
Combining this conclusion with \eqref{eq:Tstime1} proves
\begin{equation}\label{eq:Tstime2}
\mathbf1_{\Omega_t}(\tilde y(\alpha,t))
=\mathbf1_{\tilde J_t}(\alpha)
\qquad
\text{for }f(\alpha)\di\alpha\text{-a.e. }\alpha\in S_t^c,
\end{equation}
which is \eqref{eq:key1}.

By Theorem~\ref{thm:uniqueness1} and Lemma~\ref{lmm:A2},
\begin{equation}\label{eq:key2}
\tilde\nu_s(t)
=\tilde y(\cdot,t)\#[f\mathbf1_{S_t}\di\alpha],
\qquad
\tilde\nu_{ac}(t)
=\tilde y(\cdot,t)\#[f\mathbf1_{S_t^c}\di\alpha].
\end{equation}
On $\Omega_t$, \eqref{eq:upbound} and
$\di\tilde\mu_{ac}=(\tilde u_x^2+\tilde\rho^2)\di x$ give
\[
\di\tilde\nu_{ac}
=\frac{1}{1-\lambda}\tilde u_x^2\di x,
\qquad
\di(\tilde\nu-\tilde\mu)_{ac}
=\lambda\mathbf1_{\Omega_t}\di\tilde\nu_{ac}.
\]
Outside $\Omega_t$, the absolutely continuous part of $\tilde\nu-\tilde\mu$ vanishes. Hence, using \eqref{eq:key1} and \eqref{eq:key2},
\begin{equation}\label{eq:weak1}
(\tilde\nu-\tilde\mu)_{ac}(t)
=\lambda\tilde y(\cdot,t)\#
[f\mathbf1_{\tilde J_t\cap S_t^c}\di\alpha].
\end{equation}
For the singular part, \eqref{eq:upbound}, \eqref{eq:key2}, and $S_t\subset\tilde J_t$ yield
\[
(\tilde\nu-\tilde\mu)_s(t)
=\lambda\tilde\nu_s(t)
=\lambda\tilde y(\cdot,t)\#[f\mathbf1_{S_t}\di\alpha].
\]
Combining the two parts gives, for every Borel set $E\subset\mathbb R$,
\begin{equation}\label{eq:H_1alphat}
(\tilde\nu-\tilde\mu)(t)(E)
=\lambda\int_{\tilde y^{-1}(E,t)}
f(\alpha)\mathbf1_{\tilde J_t}(\alpha)\di\alpha.
\end{equation}
This is \eqref{eq:weakmeas}. 

\end{proof}

We can now complete the proof without any assumption on the accumulation of singular times and without the former pointwise growth bound for the density of $\tilde\nu-\tilde\mu$.

\begin{theorem}\label{thm:uniqueness3}
For $0\leq\lambda<1$, let
$(\tilde u,\tilde\rho,\tilde\mu,\tilde\nu)$ be a $\lambda$-dissipative solution in the sense of Definition~\ref{def:weak} with initial datum
$(\bar u,\bar\rho,\bar\mu,\bar\nu)\in\mathcal D_\lambda$ satisfying \eqref{eq:initialdecay}. Then
\[
\tilde y(\alpha,t)=y(\alpha,t),
\qquad \alpha\in\mathbb R,
\quad t\geq0,
\]
where $y$ is defined by \eqref{eq:lambda_dissipation}. Consequently,
\[
(\tilde u,\tilde\rho,\tilde\mu,\tilde\nu)
=(u,\rho,\mu,\nu),
\]
where $(u,\rho,\mu,\nu)$ is given by \eqref{eq:solution2}. In particular, the $\lambda$-dissipative solution is unique.
\end{theorem}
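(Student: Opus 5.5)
The case $\lambda=0$ is already contained in Step~4 of Theorem~\ref{thm:uniqueness1}, so we assume $0<\lambda<1$. The strategy is to establish the acceleration identity \eqref{eq:accelaration2} for a.e. $t>0$. Integrating it twice, with the initial data $\tilde y(\alpha,0)=\bar x(\alpha)$ and $\partial_t\tilde y(\alpha,0)=\tilde U(\alpha,0)=\bar u(\bar x(\alpha))$, reproduces exactly the formula \eqref{eq:lambda_dissipation}, and hence gives $\tilde y=y$. As noted before Subsection~\ref{subsec:singularset}, \eqref{eq:accelaration2} follows from \eqref{eq:energyconserved11} once we invoke \eqref{eq:dtu0}. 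So the core of the proof is the quantile characterization \eqref{eq:energyconserved11} of $\tilde\mu(t)$ with respect to the label measure $f(1-\lambda\mathbf 1_{J_t})\di\alpha$.

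\textbf{Step 1: Lagrangian representation of $\tilde\mu(t)$.} Theorem~\ref{thm:uniqueness1} gives $\tilde\nu(t)=\tilde y(\cdot,t)\#[f\di\alpha]$, and Proposition~\ref{pro:B} gives $\tilde\nu(t)-\tilde\mu(t)=\lambda\tilde y(\cdot,t)\#[f\mathbf 1_{\tilde J_t}\di\alpha]$ for a.e.\ $t$. Subtracting the two yields
\[
\tilde\mu(t)=\tilde y(\cdot,t)\#[f(1-\lambda\mathbf 1_{\tilde J_t})\di\alpha].
\]
By Proposition~\ref{eq:Jt_tildeAt}, $\tilde J_t=J_t$ up to $f\di\alpha$-null sets, so
\[
\tilde\mu(t)=\tilde y(\cdot,t)\#[f(1-\lambda\mathbf 1_{J_t})\di\alpha]
\]
for a.e.\ $t$.

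\textbf{Step 2: From the push-forward to \eqref{eq:energyconserved11}.} Since $\tilde y(\cdot,t)$ is continuous and nondecreasing, the push-forward of a measure $m\,\di\alpha$ with $m\ge0$ assigns to $(-\infty,\tilde y(\alpha,t))$ at most $\int_{-\infty}^{\alpha}m$, and to $(-\infty,\tilde y(\alpha,t)]$ at least $\int_{-\infty}^{\alpha}m$. This holds because $\{\eta:\tilde y(\eta,t)<\tilde y(\alpha,t)\}\subset(-\infty,\alpha)$ and $\{\eta:\tilde y(\eta,t)\le\tilde y(\alpha,t)\}\supset(-\infty,\alpha]$. Applied with $m=f(1-\lambda\mathbf 1_{J_t})$, this gives \eqref{eq:energyconserved11} for a.e.\ $t$.

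For a.e.\ $t$ the measure $\tilde\mu(t)$ is also absolutely continuous (since $T_s$ is countable), so the open and closed cumulative masses coincide. Hence
\[
\tilde F(\tilde y(\alpha,t),t)=\tfrac12\Big[\int_{-\infty}^{\alpha}-\kappa\int_{\mathbb R}\Big]f(1-\lambda\mathbf 1_{J_t})\di\eta .
\]
Combined with \eqref{eq:dtu0} and \eqref{eq:flowmap}, this shows that $\partial_t\tilde y(\alpha,\cdot)=\tilde U(\alpha,\cdot)$ is absolutely continuous, with derivative equal to the right-hand side of \eqref{eq:accelaration2} for a.e.\ $t$. Integrating twice then gives $\tilde y=y$ for every $\alpha$ and $t$; both sides are continuous in $t$, so a.e.\ equality extends to all $t$.

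\textbf{Step 3: Recovering the quadruple.} With $\tilde y=y$, the remaining components follow from results already proved:
\begin{itemize}
\item $\tilde u(y(\alpha,t),t)=y_t(\alpha,t)$ by \eqref{eq:flowmap}; since $y(\cdot,t)$ is surjective, $\tilde u=u$.
\item $\tilde\rho=\rho$ follows from the push-forward identity in Step~3 of Theorem~\ref{thm:uniqueness1}.
\item $\tilde\nu=\nu$ follows from Theorem~\ref{thm:uniqueness1}.
\item $\tilde\mu(t)=\mu(t)$ holds for a.e.\ $t$ by Step~1.
\end{itemize}
To obtain $\tilde\mu(t)=\mu(t)$ for \emph{every} $t$, we use right-continuity. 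By Definition~\ref{def:weak}(vi), $\tilde\mu$ is weak-star right-continuous, and by Theorem~\ref{thm:measure}(iv), $\mu$ is as well. Choosing $s\downarrow t$ in the full-measure set where the two agree gives equality at every $t$, including $t=0$.

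\textbf{Main obstacle.} The main point of care is the full-measure bookkeeping: Proposition~\ref{pro:B} holds only for a.e.\ $t$, and the identification $\tilde J_t=J_t$ holds only modulo $f\di\alpha$-null sets. Both are harmless here. The acceleration identity only needs to hold for a.e.\ $t$, because $\tilde U(\alpha,\cdot)$ is absolutely continuous by Lemma~\ref{lmm:dafermos}. The null label sets do not affect integrals against $f\di\alpha$. Finally, the passage to every $t$ is handled by the weak-star right-continuity stated in Definition~\ref{def:weak}(vi).
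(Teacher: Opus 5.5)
Your proposal is correct and follows essentially the same route as the paper. You combine Proposition~\ref{pro:B} with $\tilde J_t=J_t$ to get the push-forward representation of $\tilde\mu(t)$ for a.e.\ $t$, deduce \eqref{eq:energyconserved11} by monotonicity, integrate the acceleration identity twice via Lemma~\ref{lmm:dafermos}, and then recover $\tilde u$, $\tilde\nu$, $\tilde\rho$ from the earlier push-forward identities and $\tilde\mu$ at every time by right-continuity; the only difference is that you spell out, through the sublevel-set inclusions for $\tilde y(\cdot,t)$, the step the paper compresses into ``monotonicity''.
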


\begin{proof}
The case $\lambda=0$ was proved in Theorem~\ref{thm:uniqueness1}. Assume $0<\lambda<1$. By \eqref{eq:weakmeas} and \eqref{eq:tildejt}, for a.e. $t>0$,
\[
\tilde\mu(t)
=\tilde y(\cdot,t)\#
[f(\alpha)(1-\lambda\mathbf1_{J_t}(\alpha))\di\alpha].
\]
The monotonicity of $\tilde y(\cdot,t)$ gives \eqref{eq:energyconserved11}. At a.e. such time, $\tilde\mu(t)$ is absolutely continuous. Lemma~\ref{lmm:dafermos}, \eqref{eq:dtu0}, and \eqref{eq:energyconserved11} therefore imply \eqref{eq:accelaration2} for a.e. $t>0$. Integrating twice and using
\[
\tilde y(\alpha,0)=\bar x(\alpha),
\qquad
\partial_t\tilde y(\alpha,0)=\bar u(\bar x(\alpha)),
\]
we obtain
\[
\begin{aligned}
\tilde y(\alpha,t)
=\bar x(\alpha)+\bar u(\bar x(\alpha))t+\frac12\int_0^t(t-s)
\left[\int_{-\infty}^{\alpha}-\kappa\int_{\mathbb R}\right]
f(\eta)[1-\lambda\mathbf1_{J_s}(\eta)]\di\eta\di s.
\end{aligned}
\]
This is exactly \eqref{eq:lambda_dissipation}; hence $\tilde y=y$.

Equation \eqref{eq:flowmap} gives
$\tilde u(y(\alpha,t),t)=y_t(\alpha,t)$, and the surjectivity of $y(\cdot,t)$ yields $\tilde u=u$. The push-forward identity in Theorem~\ref{thm:uniqueness1} gives $\tilde\nu=\nu$. The representation of $\tilde\mu$ agrees with \eqref{eq:solution2} for a.e. $t$. Both sides are right-continuous: for the explicit side this follows from the continuity of $y(\alpha,t)$, the monotonicity of $J_t$, and dominated convergence. Thus $\tilde\mu=\mu$ for every $t\geq0$.

Finally, the local push-forward identity following
\eqref{eq:tilderhoidentity} and $\tilde y=y$ give, for every
$\phi\in C_c^\infty(\mathbb R)$ and every $t\geq0$,
\[
\int_{\mathbb R}\phi(x)\tilde\rho(x,t)\di x
=\int_{\mathbb R}\phi(y(\alpha,t))
(\bar\rho\circ\bar x)(\alpha)\bar x'(\alpha)\di\alpha.
\]
This is precisely the distributional, equivalently locally finite signed-measure,
interpretation of the second identity in \eqref{eq:solution2}. Hence
$\tilde\rho=\rho$ in $L^2(\mathbb R)$ for every $t$, and uniqueness of the
whole quadruple follows.
\end{proof}

\section{Asymptotic behaviours of $\lambda$-dissipative solutions}\label{sec:asymp}
In this section, we show the large time behavior of the $\lambda$-dissipation solutions to the 2HS system, i.e., Theorem \ref{thm:mainasym}. To this end, let us first study the scaling properties of the 2HS system.

For any global smooth solution $(u,\rho)$ to the 2HS equation \eqref{eq:HS}, and any non-zero constant $\gamma$,  the re-scaled functions $u_{\gamma}(x,t):=\frac{1}{\gamma}u(\gamma^2 x, \gamma t)$ and  $\rho_{\gamma}(x,t):= \gamma\rho(\gamma^2 x, \gamma t)$  are also solutions to the Hunter-Saxton equation \eqref{eq:HS}. Self-similar solutions are a special class of solutions that are invariant under this scaling property.
In particular, by choosing $\gamma:=2/t$, we have 
\[
u(x,t) = \frac{t}{2}u\left(\frac{4x}{t^2}, 2\right)=:\frac{t}{2}\tilde{u}\left(\frac{4x}{t^2}\right),\quad
\rho(x,t) = \frac{2}{t}\rho\left(\frac{4x}{t^2}, 2\right)=:\frac{2}{t}\tilde{\rho}\left(\frac{4x}{t^2}\right);
\] 
let  $y:=\frac{4x}{t^2}$, then 
\[
\tilde{u}(y)=\frac{2}{t}u\left(\frac{t^2}{4}y, t\right),\quad \tilde{\rho}(y)=\frac{t}{2}\rho\left(\frac{t^2}{4}y, t\right)
\] 
for these self-similar solutions.  It follows that the rescaled limits  
\[
\lim\limits_{t\to+\infty} \frac{2}{t}{u\left(\frac{t^2}{4}x,t\right)},\quad \lim\limits_{t\to+\infty} \frac{t}{2}{\rho\left(\frac{t^2}{4}x,t\right)}
\] 
will play an important role in the study of the asymptotic behavior of the solutions if the limits exist. Finding these limits is exactly part (i) in Theorem \ref{thm:mainasym}.
It should be noted that this rescaled limit  $\lim\limits_{t\to+\infty} \frac{2}{t}{u\left(\frac{t^2}{4}x,t\right)}$  is also invariant under the above scaling of the equation, provided it exists. Based on this limit, we give the leading order term in the asymptotic expansions, which is part (ii) in Theorem \ref{thm:mainasym}.
Because the proof of Theorem \ref{thm:mainasym} is similar to \cite[Theorem 1.1]{Gaoliu2025} for the Hunter-Saxton equation, we will only sketch the main idea and omit the details.

We begin with the following simple facts that will be frequently used later:
\begin{lemma}(\cite[Lemma 3.1]{Gaoliu2025}) \label{lem:Hospital}
\begin{enumerate}
\item [(i)] 
We have
\begin{equation}\label{eq:limit4}
\begin{aligned}
&\lim_{t\to+\infty}\left[\frac{\lambda}{t}\int_{0}^{t} \int_{-\infty}^{\alpha}f(\eta)\mathbf{1}_{J_s}(\eta)\di \eta\di s 
- \frac{2\lambda}{t^2}\int_{0}^{t} (t-s) \int_{-\infty}^{\alpha} f(\eta)\mathbf{1}_{J_s}(\eta)\di \eta\di s\right]=0 
\end{aligned}
\end{equation}
uniformly in  $\alpha \in \mathbb{R}.$ 
\end{enumerate}
In the following, let $\alpha(t)$ be a general function depending on $t$.
\begin{enumerate}
\item [(ii)] 
If $\lim_{t\to+\infty}\alpha(t)=-\infty$, then
\begin{align}\label{eq:limit1}
\lim_{t\to+\infty}\frac{2\lambda}{t^2}\int_{0}^{t}(t-s)\int_{-\infty}^{\alpha(t)}f(\eta)\mathbf{1}_{J_s}(\eta)\di \eta\di s=\lim_{t\to+\infty}\frac{\lambda}{t}\int_{0}^{t} \int_{-\infty}^{\alpha(t)}f(\eta)\mathbf{1}_{J_s}(\eta)\di \eta\di s=0.
\end{align}
\item [(iii)] 
If $\lim_{t\to+\infty}\alpha(t)=+\infty$, then
\begin{equation}\label{eq:limit2}
\begin{aligned}
\lim_{t\to+\infty}\frac{2\lambda}{t^2}\int_{0}^{t}(t-s)\int_{-\infty}^{\alpha(t)}f(\eta)\mathbf{1}_{J_s}(\eta)\di \eta\di s&=\lim_{t\to+\infty}\frac{\lambda}{t}\int_{0}^{t} \int_{-\infty}^{\alpha(t)}f(\eta)\mathbf{1}_{J_s}(\eta)\di \eta\di s\\
&=\lambda\int_{\mathbb{R}}f(\eta) \mathbf{1}_{J}(\eta)\di \eta.
\end{aligned}
\end{equation}
\item [(iv)] 
We have
\begin{equation}\label{eq:limit3}
\begin{aligned}
\lim_{t\to+\infty}\frac{2\lambda}{t^2}\int_{0}^{t}(t-s)\int_{\mathbb{R}}f(\eta)\mathbf{1}_{J_s}(\eta)\di \eta\di s&=\lim_{t\to+\infty}\frac{\lambda}{t}\int_{0}^{t} \int_{{\mathbb{R}}}f(\eta)\mathbf{1}_{J_s}(\eta)\di \eta\di s\\
&=\lambda\int_{{\mathbb{R}}}f(\eta) \mathbf{1}_{J}(\eta)\di \eta.  
\end{aligned}
\end{equation}
\end{enumerate}
\end{lemma}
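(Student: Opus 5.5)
The plan rests on one auxiliary object. Set
\[
G(\alpha,s):=\int_{-\infty}^{\alpha}f(\eta)\mathbf{1}_{J_s}(\eta)\di\eta .
\]
Since $0\le f\le 1$, $f\in L^1$, and $J_s$ increases in $s$, the function $G(\alpha,\cdot)$ is nondecreasing in $s$. It is bounded by $\bar\nu(\mathbb R)$ uniformly in $\alpha$. Moreover, it converges, as $s\to\infty$, to
\[
G_\infty(\alpha):=\int_{-\infty}^{\alpha}f\mathbf 1_J\di\eta ,
\]
because $J=\cup_{s>0}J_s$. All four statements then reduce to elementary facts about Ces\`aro-type averages of a bounded monotone function. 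The relevant averaging kernels are the uniform weight $\frac1t\di s$ and the triangular weight $\frac{2}{t^2}(t-s)\di s$ on $[0,t]$, both of unit mass.

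For (i), I would write the difference as
\[
\frac{\lambda}{t^2}\int_0^t (2s-t)\,G(\alpha,s)\di s .
\]
Since $\int_0^t(2s-t)\di s=0$, I can subtract $G_\infty(\alpha)$ inside the integral. The quantity is then bounded by
\[
\frac{\lambda}{t}\int_0^t |G(\alpha,s)-G_\infty(\alpha)|\di s .
\]
The difficulty is uniformity in $\alpha$. Pointwise convergence does not suffice, but monotonicity in $s$ helps. For every $\alpha$,
\[
0\le G_\infty(\alpha)-G(\alpha,s)\le \int_{\mathbb R}f\,(\mathbf 1_J-\mathbf 1_{J_s})\di\eta=:\epsilon(s),
\]
and $\epsilon(s)\to0$ by monotone convergence, independently of $\alpha$. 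A bounded function tending to $0$ has Ces\`aro mean tending to $0$, which gives (i) uniformly.

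Parts (ii)--(iv) follow from the same bound. The averaged quantity differs from the corresponding average of $G_\infty(\alpha(t))$ by at most a Ces\`aro mean of $\epsilon(s)$. Both weights have unit mass, so the averages of the constant $G_\infty(\alpha(t))$ equal $G_\infty(\alpha(t))$ itself. The remaining input is the behaviour of $G_\infty$ at the ends: it tends to $0$ as $\alpha(t)\to-\infty$, and to $\int_{\mathbb R}f\mathbf 1_J$ as $\alpha(t)\to+\infty$, both by integrability of $f$. Part (iv) is the case $\alpha=+\infty$, handled directly.

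The main obstacle, and the only one, is the uniform-in-$\alpha$ estimate in (i). The key observation that resolves it is that the tail $\epsilon(s)$ dominates $G_\infty-G$ uniformly in $\alpha$. This uniformity is needed later, when $\alpha$ is chosen depending on $t$ along the rescaled characteristics.
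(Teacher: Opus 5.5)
Your proof is correct and complete. The paper gives no argument of its own here; it defers to \cite[Lemma 3.1]{Gaoliu2025}. The lemma's label suggests that the limits there are computed by L'H\^opital's rule: one differentiates $\int_0^t G(\alpha,s)\,\di s$ and $\int_0^t (t-s)G(\alpha,s)\,\di s$ in $t$ and reduces everything to $\lim_{t\to\infty}G(\alpha,t)=G_\infty(\alpha)$.

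Your route instead isolates one uniform estimate, $0\le G_\infty(\alpha)-G(\alpha,s)\le\epsilon(s)$. After that, all four parts are statements about unit-mass averages of a single bounded function tending to zero.

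What this buys is exactly what a L'H\^opital computation does not give for free. It handles the uniformity in $\alpha$ in (i) and the moving endpoint $\alpha(t)$ in (ii)--(iii). Along that route these need a separate sandwich step, for instance $\int_{\mathbb R}f\mathbf 1_{J_s}\di\eta-\int_{\alpha(t)}^{\infty}f\di\eta\le G(\alpha(t),s)\le\int_{\mathbb R}f\mathbf 1_{J_s}\di\eta$ for (iii). Your argument also never differentiates in $s$. That is convenient, since $s\mapsto G(\alpha,s)$ is only monotone and right-continuous and may jump at the singular times in $T_s$, where $A_s^L$ carries positive $f\,\di\alpha$-mass. The L'H\^opital route is slightly quicker to write for a fixed $\alpha$ and for (iv).

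One phrase to tighten: for the triangular kernel, the error is not literally a Ces\`aro mean of $\epsilon$. It is, however, bounded by $\frac{2}{t^2}\int_0^t(t-s)\epsilon(s)\,\di s\le\frac{2}{t}\int_0^t\epsilon(s)\,\di s\to0$. Alternatively, you can use (i) to pass from the triangular kernel to the uniform one.
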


\begin{proof}
The proof is exactly the same as \cite[Lemma 3.1]{Gaoliu2025}. 
\end{proof}

We have the following theorem for the rescaled limit, which corresponds to Part (i) of Theorem \ref{thm:mainasym}:
\begin{theorem}\label{thm:mainthm1}
Let $(u(t),\rho(t),\mu(t),\nu(t))\in\mathcal{D}_{\lambda}$ be an $\lambda$-dissipative ($\lambda\in[0,1]$) solution to the 2HS system subject to the initial data $(\bar{u},\bar{\rho},\bar{\mu},\bar{\nu})\in\mathcal{D}_{\lambda}$ in the sense of Definition~\ref{def:weak}.
Let $E_\lambda$ be defined in \eqref{eq:finalenergy}. Then, the rescaled limit \eqref{eq:v} holds.
Moreover, we have 
\begin{equation}\label{eq:Limitlambda-xbarlambda=k1}
\begin{aligned}
&\quad \lim_{t\to+\infty}\left[\tilde{\alpha}(t)-\bar{x}(\tilde{\alpha}(t))-\frac{\lambda}{t}\int_{0}^{t} \int_{-\infty}^{\tilde{\alpha}(t)}f(\eta)\mathbf{1}_{J_s}(\eta)\di \eta\di s\right]\\
&=\lim_{t\to+\infty}\left[\tilde{\alpha}(t)-\bar{x}(\tilde{\alpha}(t))-\frac{2\lambda}{t^2}\int_{0}^{t} (t-s)\int_{-\infty}^{\tilde{\alpha}(t)}f(\eta)\mathbf{1}_{J_s}(\eta)\di \eta\di s\right]\\
&=\lim_{t\to+\infty}\frac{\bar{x}(\tilde{\alpha}(t))}{t^2}=\lim_{t\to+\infty}\frac{\bar{x}(\bar{\alpha}(t))}{t^2}=0,
\end{aligned}
\end{equation}
and
\begin{equation}\label{eq:Limitlambda-xbarlambda=k2}
\begin{aligned}
&\lim_{t\to+\infty}\left[\bar{\alpha}(t)-\bar{x}(\bar{\alpha}(t))-\frac{\lambda}{t}\int_{0}^{t} \int_{-\infty}^{\bar{\alpha}(t)}f(\eta)\mathbf{1}_{J_s}(\eta)\di \eta\di s\right]\\
=&\lim_{t\to+\infty}\left[\bar{\alpha}(t)-\bar{x}(\bar{\alpha}(t))-\frac{2\lambda}{t^2}\int_{0}^{t} (t-s)\int_{-\infty}^{\bar{\alpha}(t)}f(\eta)\mathbf{1}_{J_s}(\eta)\di \eta\di s\right]=E_\lambda,
\end{aligned}
\end{equation}
where $\tilde{\alpha}(t)$ satisfies $y(\tilde{\alpha}(t),t)=-\frac{t^2}{4}\kappa E_\lambda$ and $\bar{\alpha}(t)$ satisfies $y(\bar{\alpha}(t),t)=\frac{t^2}{4}(1-\kappa)E_\lambda$ .
\end{theorem}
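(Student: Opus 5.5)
By uniqueness (Theorem~\ref{thm:uniqueness3}), the solution is given by \eqref{eq:lambda_dissipation}--\eqref{eq:solution2}, so everything reduces to explicit label computations. Write $x=\frac{t^2}{4}\xi$ and pick $\alpha=\alpha(t,\xi)$ with $y(\alpha,t)=\frac{t^2}{4}\xi$. Then
\[
\frac2t u\Bigl(\frac{t^2}{4}\xi,t\Bigr)=\frac2t\bar u(\bar x(\alpha))+\Bigl[\alpha-\bar x(\alpha)-\kappa\bar\nu(\mathbb R)\Bigr]-\frac{\lambda}{t}\int_0^t\Bigl[\int_{-\infty}^{\alpha}-\kappa\int_{\mathbb R}\Bigr]f\mathbf 1_{J_s}\di\eta\di s .
\]
Dividing \eqref{eq:lambda_dissipation} by $t^2/4$ gives
\[
\xi=\frac{4\bar x(\alpha)}{t^2}+\frac{4\bar u(\bar x(\alpha))}{t}+\Bigl[\alpha-\bar x(\alpha)-\kappa\bar\nu(\mathbb R)\Bigr]-\frac{2\lambda}{t^2}\int_0^t(t-s)\Bigl[\int_{-\infty}^{\alpha}-\kappa\int_{\mathbb R}\Bigr]f\mathbf 1_{J_s}\di\eta\di s .
\]
Since $\bar u$ is bounded, Lemma~\ref{lem:Hospital}(i),(iv) give
\[
\frac2t u-\xi=-\frac{4\bar x(\alpha)}{t^2}+o(1)
\]
uniformly in $\alpha$. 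The whole problem is therefore to control $\bar x(\alpha(t,\xi))/t^2$, and this is the main obstacle.

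The plan is to set $G(\alpha):=\alpha-\bar x(\alpha)\in[0,\bar\nu(\mathbb R)]$. This is nondecreasing, with $G\to0$ as $\alpha\to-\infty$ and $G\to\bar\nu(\mathbb R)$ as $\alpha\to+\infty$. Let $K_t(\alpha)$ denote the time-averaged dissipation term. First treat the labels $\tilde\alpha(t)$ and $\bar\alpha(t)$, defined by $y=-\frac{t^2}{4}\kappa E_\lambda$ and $y=\frac{t^2}{4}(1-\kappa)E_\lambda$.

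\textbf{Claim:} $\tilde\alpha(t)\to-\infty$ and $\bar\alpha(t)\to+\infty$, unless the corresponding $G$-values saturate, which is handled by monotonicity. For $\bar\alpha$, suppose along a subsequence $\bar\alpha(t)\le M$. Then $\bar x(\bar\alpha)/t^2$ is bounded below by $(M-\bar\nu(\mathbb R))/t^2\to$ something $\ge 0$ in the limit, and bounded above by $M/t^2\to0$. The equation for $\xi=(1-\kappa)E_\lambda$ would then force $G(M)-\lambda\int_{-\infty}^{M}f\mathbf 1_J\ge E_\lambda$ (via monotone convergence of the time averages, in the form of Lemma~\ref{lem:Hospital}). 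That contradicts strict deficiency unless the mass beyond $M$ vanishes, and in that case any label works.

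Once $\bar\alpha\to+\infty$, we have $\bar x(\bar\alpha)\ge\bar\alpha-\bar\nu(\mathbb R)$, which is eventually positive, while $\bar x(\bar\alpha)/t^2=(1-\kappa)E_\lambda/4-(\text{the bracket})/4+o(1)$. The bracket tends to $E_\lambda-\kappa\bar\nu(\mathbb R)+\kappa\lambda\int f\mathbf 1_J$ by Lemma~\ref{lem:Hospital}(iii),(iv), and this makes $\bar x(\bar\alpha)/t^2\to0$. Symmetrically, $\tilde\alpha\to-\infty$, and Lemma~\ref{lem:Hospital}(ii) gives the bracket $\to0$ and $\bar x(\tilde\alpha)/t^2\to0$. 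This yields \eqref{eq:Limitlambda-xbarlambda=k1} and \eqref{eq:Limitlambda-xbarlambda=k2}.

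For general $\xi$, use the monotonicity of $\alpha(t,\xi)$ in $\xi$. For $\xi$ in the kink interval, $\alpha$ lies between $\tilde\alpha(t)$ and $\bar\alpha(t)$, so $\bar x(\alpha)/t^2$ is squeezed between two quantities tending to $0$, and $\frac2t u\to\xi$. For $\xi>(1-\kappa)E_\lambda$ we have $\alpha\ge\bar\alpha\to\infty$. Then $G(\alpha)-K_t(\alpha)\to E_\lambda$ uniformly, because $G(\infty)-G(\bar\alpha)\to0$ and similarly for $K_t$. Hence $\frac2tu\to E_\lambda-\kappa E_\lambda=(1-\kappa)E_\lambda$; here we use $\bar\nu(\mathbb R)-\lambda\int f\mathbf 1_J=E_\lambda$ to rewrite the $\kappa$ terms. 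The case $\xi<-\kappa E_\lambda$ is symmetric, giving $-\kappa E_\lambda$. This proves \eqref{eq:v}.

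Finally, monotonicity in $\xi$ together with continuity of the limit upgrades the pointwise convergence to uniform convergence in $\xi$, by the Dini/Pólya argument for monotone functions. This is the form needed later for \eqref{eq:Linfty}.
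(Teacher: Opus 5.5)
Your reduction is correct and is the natural route; the paper itself only refers to \cite[Theorem 3.1]{Gaoliu2025}. By uniqueness the solution is given by \eqref{eq:lambda_dissipation}--\eqref{eq:solution2}, and Lemma~\ref{lem:Hospital}(i),(iv) give $\frac2t u(y(\alpha,t),t)-\frac{4}{t^2}y(\alpha,t)=-\frac{4\bar x(\alpha)}{t^2}+o(1)$ uniformly in $\alpha$.

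\textbf{The gap.} Your Claim that $\tilde\alpha(t)\to-\infty$ and $\bar\alpha(t)\to+\infty$ is false in general. You use it in ``Once $\bar\alpha\to+\infty$'', in ``Symmetrically, $\tilde\alpha\to-\infty$'', and in ``for $\xi>(1-\kappa)E_\lambda$ we have $\alpha\ge\bar\alpha\to\infty$''. For a counterexample, take $\kappa=0$, $\bar u(x)=\min\{\max\{x,0\},1\}$, $\bar\rho=0$, and $\bar\nu=\bar\mu=\mathbf 1_{[0,1]}\di x$. Then $J=\emptyset$ and $E_\lambda=1$. Moreover $y(\alpha,t)=\alpha$ for $\alpha\le0$ and $y(\alpha,t)=\frac{\alpha}{2}(1+\frac t2)^2$ for $0\le\alpha\le2$. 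Hence $\tilde\alpha(t)\equiv0$ and $\bar\alpha(t)=2t^2/(t+2)^2\to2$. Your handling of the exceptional case does not close this:
\begin{itemize}
\item From $\bar\alpha\le M$ you only get the upper bound $\bar x(\bar\alpha)\le M$; the lower bound $(M-\bar\nu(\mathbb R))/t^2$ you wrote does not follow.
\item The limits \eqref{eq:Limitlambda-xbarlambda=k1}--\eqref{eq:Limitlambda-xbarlambda=k2} are never actually verified when the labels stay bounded; ``any label works'' is not an argument.
\end{itemize}

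\textbf{The repair.} Use a monotone squeeze that never requires the labels to escape to infinity. Define
\[
H_t(\alpha):=\int_{-\infty}^{\alpha}f(\eta)\Bigl[1-\frac{2\lambda}{t^2}\int_0^t(t-s)\mathbf 1_{J_s}(\eta)\di s\Bigr]\di\eta .
\]
It is nondecreasing and nonnegative (the weight has total mass one and $\lambda\le1$), and $H_t(+\infty)\to E_\lambda$ by Lemma~\ref{lem:Hospital}(iv). Treating the $\kappa$-terms with the same lemma, the defining equations become $\frac{4\bar x(\bar\alpha)}{t^2}+H_t(\bar\alpha)=E_\lambda+o(1)$ and $\frac{4\bar x(\tilde\alpha)}{t^2}+H_t(\tilde\alpha)=o(1)$.
\begin{itemize}
\item For $\bar\alpha$: since $H_t(\bar\alpha)\le H_t(+\infty)$, we get $\liminf 4\bar x(\bar\alpha)/t^2\ge0$. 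If $\limsup>0$ along a subsequence, then $\bar\alpha\ge\bar x(\bar\alpha)\to+\infty$ there, so $H_t(\bar\alpha)\to E_\lambda$ by Lemma~\ref{lem:Hospital}(iii), a contradiction. Hence $\bar x(\bar\alpha)/t^2\to0$ and $H_t(\bar\alpha)\to E_\lambda$ in every case.
\item For $\tilde\alpha$: $H_t\ge0$ gives $\limsup 4\bar x(\tilde\alpha)/t^2\le0$. A negative $\liminf$ would force $\tilde\alpha\le\bar x(\tilde\alpha)+\bar\nu(\mathbb R)\to-\infty$, hence $H_t(\tilde\alpha)\le\tilde\alpha-\bar x(\tilde\alpha)\to0$, again a contradiction.
\end{itemize}
Lemma~\ref{lem:Hospital}(i) then converts these limits into the stated forms. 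For $\xi>(1-\kappa)E_\lambda$, replace ``$\bar\alpha\to\infty$'' by the squeeze $(G-K_t)(\bar\alpha)\le(G-K_t)(\alpha)\le(G-K_t)(+\infty)$. This holds because $G-K_t=\int_{-\infty}^{\alpha}f[1-\frac{\lambda}{t}\int_0^t\mathbf 1_{J_s}\di s]\di\eta$ is nondecreasing, and both ends tend to $E_\lambda$. The case $\xi<-\kappa E_\lambda$ is analogous.

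Finally, your Dini/P\'olya step is unjustified, because $\xi\mapsto\frac2tu(\frac{t^2}{4}\xi,t)$ is not monotone in general. It is also unnecessary: the estimates above are already uniform in $\xi$.
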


\begin{proof}
The proof is the same as \cite[Theorem 3.1]{Gaoliu2025}.
\end{proof}

Next, we are going to show the asymptotic expansions of the $\lambda$-dissipative solutions to the 2HS system. According to the scaling property of the equation, for any fixed $x\in\mathbb{R}$, the Lagrangian label $\alpha$ satisfying $xt^2/4=y(\alpha,t)$ is important. We define
\begin{definition}\label{def:lambda01}
\begin{align}\label{eq:lambda01}
\alpha_l(t):=\sup\left\{\alpha:~y(\alpha,t)<-\frac{t^2}{4}\kappa E_{\lambda}\right\},\quad\mbox{and}\quad \alpha_r(t):=\inf\left\{\alpha:~y(\alpha,t)>\frac{t^2}{4}(1-\kappa) E_{\lambda}\right\}.
\end{align}
\end{definition}
Since $y$ is continuous, the above definition actually implies that
\begin{equation}\label{eq:y(xi_0,t)=0_and_y(xi_1,t)=t^2barmu/4}
\begin{aligned}
y(\alpha_l(t),t)=-\frac{t^2}{4}\kappa E_{\lambda},\quad\mbox{and}\quad y(\alpha_r(t),t)=\frac{t^2}{4}(1-\kappa) E_{\lambda}.
\end{aligned}
\end{equation}
Hence, $\alpha_\ell(t)$ is one of $\tilde{\alpha}(t)$ and also satisfies \eqref{eq:Limitlambda-xbarlambda=k1}; $\alpha_r(t)$ is one of $\bar{\alpha}(t)$ and  satisfies \eqref{eq:Limitlambda-xbarlambda=k2}.

Now we are ready to show the following theorem:
\begin{theorem}
Let $(u(t),\rho(t),\mu(t),\nu(t))\in\mathcal{D}_{\lambda}$ be an $\lambda$-dissipative ($\lambda\in[0,1]$) solution to the 2HS system subject to the initial data $(\bar{u},\bar{\rho},\bar{\mu},\bar{\nu})\in\mathcal{D}_{\lambda}$, in the sense of Definition~\ref{def:weak}.
Then, the asymptotic expansions, i.e., \eqref{eq:Linfty}, \eqref{eq:L2}, and \eqref{eq:L2rho} hold. Moreover,  the energy on the singular part of measure and $\rho$ will tend to zero as $t\to\infty$, i.e., \eqref{eq:singular part} holds.
\end{theorem}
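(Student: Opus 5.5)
By uniqueness (Theorem~\ref{thm:uniqueness3}), the solution is the one given by \eqref{eq:lambda_dissipation}--\eqref{eq:solution2}. All estimates can therefore be done in the label variable $\alpha$, using the rescaled limit of Theorem~\ref{thm:mainthm1} and the splitting points $\alpha_l(t),\alpha_r(t)$ of Definition~\ref{def:lambda01}.

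\textbf{Sup-norm expansion \eqref{eq:Linfty}.} I would write $x=y(\alpha,t)$ and compare $\frac{2}{t}u(y(\alpha,t),t)$ with $v\bigl(\frac{4}{t^2}y(\alpha,t)\bigr)$, splitting into the three label regions $\alpha\le\alpha_l(t)$, $\alpha_l(t)\le\alpha\le\alpha_r(t)$ and $\alpha\ge\alpha_r(t)$. On the middle region, \eqref{eq:solution2} and \eqref{eq:lambda_dissipation} give
\[
\frac{2}{t}y_t-\frac{4}{t^2}y=O(1/t)+\Bigl[\frac{2\lambda}{t^2}\int_0^t(t-s)\int_{-\infty}^{\alpha}f\mathbf 1_{J_s}-\frac{\lambda}{t}\int_0^t\int_{-\infty}^{\alpha}f\mathbf 1_{J_s}\Bigr],
\]
and the bracket is $o(1)$ uniformly in $\alpha$ by Lemma~\ref{lem:Hospital}(i). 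The $O(1/t)$ term uses the boundedness of $\bar u$ and of $\bar x(\alpha)/t$ on compact label sets. More carefully, I would use $\bar x(\alpha_{l,r}(t))/t^2\to0$ together with the monotonicity of $y$ and of $u$ in the outer regions. On the outer regions, $\frac{2}{t}u$ is squeezed between its values at $\alpha_l(t)$ (respectively $\alpha_r(t)$) and at $\mp\infty$. Those values converge to $-\kappa E_\lambda$ and $(1-\kappa)E_\lambda$ by \eqref{eq:Limitlambda-xbarlambda=k1}--\eqref{eq:Limitlambda-xbarlambda=k2} and by Lemma~\ref{lem:Hospital}(ii)--(iv). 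Monotonicity of $u$ is not global, but the oscillation of $u$ over these regions is controlled via $\int u_x^2$, as in the next step.

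\textbf{$L^2$ expansion \eqref{eq:L2}, and $\rho$.} The kink has derivative $\frac{2}{t}\mathbf 1_{[y(\alpha_l),y(\alpha_r)]}$. I would compute
\[
\|u_x-\partial_x[\tfrac t2 v]\|_{L^2}^2=\int_{\mathbb R}u_x^2\,dx-\frac{4}{t}\int_{y(\alpha_l)}^{y(\alpha_r)}u_x\,dx+\frac{4}{t^2}\cdot\frac{t^2}{4}E_\lambda .
\]
The middle term equals $\frac4t[u(y(\alpha_r),t)-u(y(\alpha_l),t)]\to 2E_\lambda$ by step one. Hence it suffices to show $\limsup_t\bigl(\|u_x\|_{L^2}^2+\|\rho\|_{L^2}^2+\mu_s(t)(\mathbb R)\bigr)\le E_\lambda$. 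This is the total mass $\mu(t)(\mathbb R)=\int f(1-\lambda\mathbf 1_{J_t})\,d\alpha\to\bar\nu(\mathbb R)-\lambda\int f\mathbf 1_J=E_\lambda$, so the norm of the difference tends to $\le E_\lambda-2E_\lambda+E_\lambda=0$. This simultaneously forces $\|\rho(t)\|_{L^2}^2+\mu_s(t)(\mathbb R)\to0$, since the cross term only sees $u_x$. That gives \eqref{eq:L2rho} and \eqref{eq:singular part}.

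\textbf{Main obstacle.} The delicate point is the outer-region control in the $L^\infty$ step, together with the uniformity of the middle-region estimate. The labels $\alpha_{l,r}(t)$ may drift to $\pm\infty$ or stay bounded, and uniform control of $\bar x(\alpha)/t$ and of $\bar u(\bar x(\alpha))/t$ is needed. I would handle it exactly as in \cite[Theorem 1.1]{Gaoliu2025}: use $|u(x)-u(z)|\le |x-z|^{1/2}\bigl(\int_z^x u_x^2\bigr)^{1/2}$. The tail energy $\mu(t)\bigl((-\infty,y(\alpha_l))\bigr)$ tends to $0$ by \eqref{eq:Limitlambda-xbarlambda=k1}, so the oscillation of $\frac{2}{t}u$ there is $o(1)$ on spatial windows of size $O(t^2)$. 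A separate argument using boundedness of $u$ at $\pm\infty$ handles the remainder.
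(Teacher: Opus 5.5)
Your argument for \eqref{eq:L2}, \eqref{eq:L2rho} and \eqref{eq:singular part} is correct and takes a more economical route than the paper.

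\textbf{How the paper does it.} The paper splits $\|\rho\|_{L^2}^2+\|u_x-\partial_x[\frac t2v(\frac{4x}{t^2})]\|_{L^2}^2$ into $I_1,I_2,I_3$ over the three Eulerian regions. It pulls each piece back to labels and bounds them separately using \eqref{eq:Limitlambda-xbarlambda=k1}, \eqref{eq:Limitlambda-xbarlambda=k2} and \eqref{eq:limit3}; these bounds telescope. It then obtains \eqref{eq:singular part} in a separate step from $\mu(t)(\mathbb R)\to E_\lambda$ and \eqref{eq:energylim}.

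\textbf{How yours differs.} You use the single identity
\[
\Bigl\|u_x-\partial_x\Bigl[\tfrac t2v\bigl(\tfrac{4x}{t^2}\bigr)\Bigr]\Bigr\|_{L^2}^2+\|\rho(\cdot,t)\|_{L^2}^2+\mu_s(t)(\mathbb R)
=\mu(t)(\mathbb R)-\frac4t\Bigl[u\bigl(\tfrac{t^2}{4}(1-\kappa)E_\lambda,t\bigr)-u\bigl(-\tfrac{t^2}{4}\kappa E_\lambda,t\bigr)\Bigr]+E_\lambda .
\]
Its left side is a sum of three nonnegative terms, so all three conclusions follow at once. You need only two inputs: $\mu(t)(\mathbb R)=\int f(1-\lambda\mathbf 1_{J_t})\di\alpha\to E_\lambda$ by monotone convergence ($J_t\uparrow J$), and the pointwise limit \eqref{eq:v} at $x=-\kappa E_\lambda$ and $x=(1-\kappa)E_\lambda$. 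Cite \eqref{eq:v} there rather than ``step one'', so this part does not depend on the $L^\infty$ estimate. Your identity also accounts for the singular mass exactly. The paper's label formulas for $I_1,I_2,I_3$ drop the restriction to $B_t^L$, so they are really only upper bounds.

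\textbf{The $L^\infty$ step.} The paper only refers \eqref{eq:Linfty} to \cite{Gaoliu2025}. Your middle-region computation is essentially right, with two corrections:
\begin{itemize}
\item The $\bar x$-term is $-\frac4{t^2}\bar x(\alpha)$. It is $o(1)$, not $O(1/t)$, uniformly on $[\alpha_l(t),\alpha_r(t)]$, because $\bar x$ is monotone and $\bar x(\alpha_{l,r}(t))/t^2\to0$. Compact label sets are irrelevant, since $\alpha_{l,r}(t)$ may drift.
\item The $\kappa\int_{\mathbb R}$ part of the bracket, which your display omits, is handled by Lemma~\ref{lem:Hospital}(iv).
\end{itemize}

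\textbf{The gap: the outer regions.} These do not close as sketched.
\begin{itemize}
\item $u(\cdot,t)$ is not monotone, and it need not have limits at $\pm\infty$, since $\bar u$ need not.
\item The bound $|u(x)-u(z)|\le|x-z|^{1/2}(\int_z^xu_x^2)^{1/2}$ only controls windows of length $O(t^2)$, whereas $(-\infty,y(\alpha_l(t),t))$ is unbounded.
\end{itemize}
Your unspecified ``separate argument'' is therefore exactly the missing piece.

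\textbf{The fix.} Along characteristics, the only non-monotone part of $u$ is the bounded term $\bar u(\bar x(\alpha))$. Multiplying the first line of \eqref{eq:difference} by $2/t$ gives
\[
\frac2t u(y(\alpha,t),t)+\kappa E_\lambda=\frac2t\bar u(\bar x(\alpha))+G(\alpha,t)-\kappa\bigl[G(+\infty,t)-E_\lambda\bigr],
\qquad
G(\alpha,t):=\int_{-\infty}^{\alpha}f(\eta)\Bigl[1-\frac{\lambda}{t}\int_0^t\mathbf 1_{J_s}(\eta)\di s\Bigr]\di\eta ,
\]
where $G(\cdot,t)$ is nonnegative and nondecreasing.
\begin{itemize}
\item Left region: for $\alpha\le\alpha_l(t)$, $0\le G(\alpha,t)\le G(\alpha_l(t),t)\to0$ by \eqref{eq:Limitlambda-xbarlambda=k1}, and $G(+\infty,t)\to E_\lambda$ by \eqref{eq:limit3}.
\item Right region: for $\alpha\ge\alpha_r(t)$, $G(\alpha_r(t),t)\le G(\alpha,t)\le G(+\infty,t)$, and both ends tend to $E_\lambda$ by \eqref{eq:Limitlambda-xbarlambda=k2} and \eqref{eq:limit3}.
\end{itemize}
So the squeeze you wanted works, but it must be done in label space on $G$, not on $u$.
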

\begin{proof}
The proof of \eqref{eq:Linfty} is similar to the proof of (1.10) in \cite[Theorem 3.2]{Gaoliu2025} and here we only prove  \eqref{eq:L2},  \eqref{eq:L2rho}, and \eqref{eq:singular part}.
We note by the definition of $v$ in \eqref{eq:v}
\begin{equation*}
\frac{t}{2} v\left(\frac{4x}{t^2}\right) =
\left\{
\begin{aligned}
&-\frac{t}{2}\kappa E_{\lambda}, \quad x<-\frac{t^2}{4}\kappa E_{\lambda},\\
&\frac{2x}{t} , \quad -\frac{t^2}{4}\kappa E_{\lambda}\leq x\leq \frac{t^2}{4}(1-\kappa) E_{\lambda}, \\
&\frac{t}{2}(1-\kappa)E_{\lambda}, \quad x> \frac{t^2}{4}(1-\kappa) E_{\lambda}.
\end{aligned}
\right.
\end{equation*}
We need to consider the following difference
\begin{gather}\label{eq:diff}
u(x,t)-\frac{t}{2} v\left(\frac{4x}{t^2}\right)=\left\{
\begin{split}
&u(x,t)+\frac{t}{2}\kappa E_{\lambda}, \quad x<-\frac{t^2}{4}\kappa E_{\lambda},\\
&u(x,t)-\frac{2x}{t} , \quad -\frac{t^2}{4}\kappa E_{\lambda}\leq x\leq \frac{t^2}{4}(1-\kappa) E_{\lambda}, \\
&u(x,t)-\frac{t}{2}(1-\kappa)E_{\lambda}, \quad x> \frac{t^2}{4}(1-\kappa) E_{\lambda}.
\end{split}
\right.
\end{gather}
Evaluating \eqref{eq:diff} at $x=y(\alpha,t)$, and using the explicit formulae \eqref{eq:lambda_dissipation} and \eqref{eq:solution2} for $y$ and $u$ respectively, we eventually obtain
\begin{equation}\label{eq:difference}
\begin{aligned}
&u(y(\alpha,t),t)-\frac{t}{2} v\left(\frac{4y(\alpha,t)}{t^2}\right)
\\
=&\left\{
\begin{aligned}
&\bar{u}(\bar{x}(\alpha))+\frac{t}{2}\left[\alpha-\bar{x}(\alpha) -\kappa\bar{\nu}(\mathbb{R})+\kappa E_\lambda\right]\\
&\qquad-\frac{\lambda}{2}\int_{0}^{t} \left[\int_{-\infty}^{\alpha}-\kappa\int_{\mathbb{R}}\right]f(\eta)\mathbf{1}_{J_s}(\eta)\di \eta\di s,\quad y(\alpha,t)<-\frac{t^2}{4}\kappa E_{\lambda},\\
&-\frac{2}{t}\bar{x}(\alpha)-\bar{u}(\bar{x}(\alpha)) - \frac{\lambda}{2}\int_{0}^{t} \left[\int_{-\infty}^{\alpha}-\kappa\int_{\mathbb{R}}\right]f(\eta)\mathbf{1}_{J_s}(\eta)\di \eta\di s\\
&\qquad +  \frac{\lambda}{t}\int_{0}^{t} (t-s) \left[\int_{-\infty}^{\alpha} -\kappa\int_{\mathbb{R}}\right]f(\eta)\mathbf{1}_{J_s}(\eta)\di \eta\di s
,\quad -\frac{t^2}{4}\kappa E_{\lambda}\leq  y(\alpha,t) \leq \frac{t^2}{4}(1-\kappa) E_{\lambda}, \\
&\bar{u}(\bar{x}(\alpha))+\frac{t}{2}\left[\alpha-\bar{x}(\alpha) -\kappa\bar{\nu}(\mathbb{R})+\kappa E_\lambda-E_\lambda\right]\\
&\qquad-\frac{\lambda}{2}\int_{0}^{t} \left[\int_{-\infty}^{\alpha}-\kappa\int_{\mathbb{R}}\right]f(\eta)\mathbf{1}_{J_s}(\eta)\di \eta\di s,\quad y(\alpha,t)>\frac{t^2}{4}(1-\kappa) E_{\lambda}.
\end{aligned}
\right.
\end{aligned}
\end{equation}

\noindent\textbf{Proof of \eqref{eq:L2} and \eqref{eq:L2rho}:} According to \eqref{eq:diff}, we have
\begin{multline}\label{eq:I123}
\|\rho(\cdot,t)\|_{L^2}^2+\left\|u_x(x,t)-\partial_x\left[\frac{t}{2} v\left(\frac{4x}{t^2}\right)\right]\right\|_{L^2}^2=\int_{(-\infty,-\frac{t^2}{4}\kappa E_{\lambda})}(\rho^2+u_x^2)(x,t)\di x\\
+\int_{(-\frac{t^2}{4}\kappa E_{\lambda},\frac{t^2}{4}(1-\kappa) E_{\lambda})}\rho^2(x,t)+\left[u_x(x,t)-\frac{2}{t}\right]^2\di x
+\int_{(\frac{t^2}{4}(1-\kappa) E_{\lambda},+\infty)}(\rho^2+u_x^2)(x,t)\di x\\
=:I_1(t)+I_2(t)+I_3(t).
\end{multline}
We will apply the change of variables to estimate $I_i$ for $i=1$, $2$, $3$.
According to \eqref{eq:ABt}, we have
\begin{multline*}
I_1(t)=\int_{(-\infty,-\frac{t^2}{4}\kappa E_{\lambda})}(\rho^2+u_x^2)(x,t)\di x\\
=\int_{(-\infty,-\frac{t^2}{4}\kappa E_{\lambda})\cap y(B^L_t,t)}(\rho^2+u_x^2)(x,t)\di x=\int_{(-\infty,\alpha_l(t))\cap B^L_t}(\rho^2+u_x^2)(y(\alpha,t),t)y_\alpha(\alpha,t)\di \alpha.
\end{multline*}
By \eqref{eq:acpart} (or \eqref{eq:acpart2}) and \eqref{eq:Limitlambda-xbarlambda=k1}, we obtain
\begin{equation}\label{eq:termI1}
\begin{aligned}
I_1(t)=& \int_{(-\infty,\alpha_l(t))}f(\alpha)[1-\lambda \mathbf{1}_{J_t}(\alpha)]\di \alpha=\alpha_l(t)-\bar{x}(\alpha_l(t))-\lambda\int_{-\infty}^{\alpha_l(t)}f(\alpha)\mathbf{1}_{J_t}(\alpha)\di \alpha\\
\leq& \alpha_l(t)-\bar{x}(\alpha_l(t))-\frac{\lambda}{t}\int_0^t\int_{-\infty}^{\alpha_l(t)}f(\alpha)\mathbf{1}_{J_s}(\alpha)\di \alpha\di s\to 0,
\end{aligned}
\end{equation}
as $t\to+\infty$,  where we used $\mathbf{1}_{J_s}(\alpha)\leq \mathbf{1}_{J_t}(\alpha)$ for $s\leq t$ in the last inequality.

For $I_3(t)$, by \eqref{eq:limit3} and \eqref{eq:Limitlambda-xbarlambda=k2}, we obtain
\begin{equation}\label{eq:termI3}
\begin{aligned}
&I_3(t)=\int_{(\frac{t^2}{4}(1-\kappa) E_{\lambda},+\infty)}(\rho^2+u_x^2)(x,t)\di x\\
=&\int_{(\alpha_r(t),+\infty)\cap B^L_t}(\rho^2+u_x^2)(y(\alpha,t),t)y_\alpha(\alpha,t)\di \alpha = \int_{\alpha_r(t)}^{+\infty}f(\alpha)[1-\lambda \mathbf{1}_{J_t}(\alpha)]\di \alpha\\
\leq& \bar{\nu}(\mathbb{R})-[\alpha_r(t)-\bar{x}(\alpha_r(t))]-\frac{\lambda}{t}\int_0^t\int_{\alpha_r(t)}^{+\infty}f(\alpha) \mathbf{1}_{J_s}(\alpha)\di \alpha\di s\\
=&\bar{\nu}(\mathbb{R})-\frac{\lambda}{t}\int_0^t\int_{\mathbb{R}}f(\alpha) \mathbf{1}_{J_s}(\alpha)\di \alpha\di s-\left[\alpha_r(t)-\bar{x}(\alpha_r(t))-\frac{\lambda}{t}\int_0^t\int^{\alpha_r(t)}_{-\infty}f(\alpha) \mathbf{1}_{J_s}(\alpha)\di \alpha \di s\right]\to 0,
\end{aligned}
\end{equation}
as $t\to+\infty$, where we also recall the definition $E_{\lambda}$ in \eqref{eq:finalenergy}.

For $I_2(t)$, we have 
\begin{multline*}
I_2(t)=\int_{(-\frac{t^2}{4}\kappa E_{\lambda},\frac{t^2}{4}(1-\kappa) E_{\lambda})}\rho^2(x,t)+\left[u_x(x,t)-\frac{2}{t}\right]^2\di x\\
= \int_{(\alpha_l(t),\alpha_r(t))\cap B_t}(\rho^2+u_x^2)(y(\alpha,t),t)y_\alpha(\alpha,t)\di \alpha + 
\int_{(\alpha_l(t),\alpha_r(t))} \left[ \frac{4}{t^2}y_\alpha(\alpha,t)-\frac{4}{t}u_x(y(\alpha,t),t)y_\alpha(\alpha,t) \right]\di \alpha,
\end{multline*}
which implies
\begin{equation}\label{eq: term I2}
\begin{aligned}
I_2(t)&= \int_{(\alpha_l(t),\alpha_r(t))}f(\alpha)[1-\lambda \mathbf{1}_{J_t}(\alpha)]\di \alpha+\frac{4}{t^2}[y(\alpha_r(t),t)-y(\alpha_l(t),t)]\\
&\qquad\qquad\qquad-\frac{4}{t}[u(y(\alpha_r(t),t),t)-u(y(\alpha_l(t),t),t)]\\
&=[\alpha_r(t)-\bar{x}(\alpha_r(t))]  -  [\alpha_l(t)-\bar{x}(\alpha_l(t))] -\lambda\int_{\alpha_l(t)}^{\alpha_r(t)}f(\alpha) \mathbf{1}_{J_t}(\alpha) \di \alpha\\
&\qquad\qquad
+\frac{4}{t^2}\left(\frac{t^2}{4} E_{\lambda}\right )
-\frac{4}{t}u\left(\frac{t^2}{4}(1-\kappa)E_{\lambda},t\right)  +  \frac{4}{t}u\left(-\frac{t^2}{4}\kappa E_{\lambda},t\right)\\
&\leq \left[\alpha_r(t)-\bar{x}(\alpha_r(t))-\frac{\lambda}{t}\int_0^t\int^{\alpha_r(t)}_{-\infty}f(\alpha) \mathbf{1}_{J_s}(\alpha)\di \alpha\di s\right]  \\
&\qquad\qquad -\left[\alpha_l(t)-\bar{x}(\alpha_l(t))-\frac{\lambda}{t}\int_0^t\int^{\alpha_\ell(t)}_{-\infty}f(\alpha) \mathbf{1}_{J_s}(\alpha)\di \alpha\di s\right]\\
&\qquad\qquad+ E_{\lambda}
-\frac{4}{t}u\left(\frac{t^2}{4}(1-\kappa)E_{\lambda},t\right)  +  \frac{4}{t}u\left(-\frac{t^2}{4}\kappa E_{\lambda},t\right).
\end{aligned}
\end{equation}
Combining the rescaled limit in \eqref{eq:v} we have proven, \eqref{eq:Limitlambda-xbarlambda=k1} and \eqref{eq:Limitlambda-xbarlambda=k2}, we finally have
\begin{equation*}
\lim_{t\to+\infty}   I_2(t) \leq E_\lambda-0 + E_{\lambda} - 2(1-\kappa)E_{\lambda} -2\kappa E_{\lambda}=0.
\end{equation*}
This completes the proof of \eqref{eq:L2} and \eqref{eq:L2rho}. We note \eqref{eq:L2} especially implies that 
\begin{equation}\label{eq:energylim}
\lim_{t\to+\infty}\|u_x(\cdot,t)\|_{L^2(\mathbb R)}^2
=\left\|\partial_x\left[\frac{t}{2}v\left(\frac{4x}{t^2}\right)\right]\right\|_{L^2(\mathbb R)}^2
=E_\lambda.
\end{equation}

\noindent\textbf{Proof of \eqref{eq:singular part}:} From \eqref{eq:solution2}$_2$, i.e., $\mu(t)=y(\cdot,t)\# [f(1-\lambda \mathbf{1}_{J_t})\di \alpha]$, we have
\begin{align*}
\lim_{t\to+\infty}\mu(t)(\mathbb{R})=\lim_{t\to+\infty}\int_{\mathbb{R}}f(1-\lambda \mathbf{1}_{J_t})(\alpha)\di\alpha=E_\lambda.
\end{align*}
The definition of the energy measure $\mu$ implies that  
\[
\|u_x(\cdot, t)\|_{L^2}^2+\|\rho(\cdot,t)\|_{L^2}^2  + {\mu}_{s}(t)(\mathbb{R})=\mu(t)(\mathbb{R})
\]
for all $t>0$, so passing to the limit as $t\to+\infty$, we finally obtain \eqref{eq:singular part}. 
This completes the whole proof of Theorem~\ref{thm:mainthm1}.

\end{proof}

\appendix

\section{Some useful facts from real analysis}\label{app:real}
In this appendix, we state three useful lemmas from real analysis. All of them are fundamental and somewhat classical. The proofs can be found in the appendix of \cite{gao2022regularity}.

\begin{lemma}\label{lmm:A1}
The following two statements holds:
\begin{enumerate}
\item[(i)] The real line $\mathbb{R}$ cannot be written as the union of uncountably many disjoint subsets with positive measures. 
\item[(ii)] Let $X:\mathbb{R}\to\mathbb{R}$ be an absolutely continuous function satisfying $X_\alpha(\alpha)>0$ for a.e. $\alpha\in\mathbb{R}$. Then, there exists a unique absolutely continuous inverse of $X$.
\end{enumerate}
\end{lemma}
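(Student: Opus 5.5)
The two statements of Lemma~\ref{lmm:A1} are independent, and I would prove them separately with elementary measure-theoretic arguments.

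For (i), I would argue by contradiction through a countable exhaustion. Suppose $\{E_i\}_{i\in I}$ is a family of pairwise disjoint measurable subsets of $\mathbb{R}$ with $\mathcal L(E_i)>0$ for every $i$. Write $\mathbb{R}=\bigcup_{n\in\mathbb Z}[n,n+1)$. Each $E_i$ has positive measure, so there is some $n$ with $\mathcal L(E_i\cap[n,n+1))>0$, and hence some $k\in\mathbb N$ with $\mathcal L(E_i\cap[n,n+1))>1/k$. For fixed $(n,k)$, the sets $E_i\cap[n,n+1)$ are disjoint subsets of an interval of length one. Therefore at most $k$ indices $i$ can satisfy $\mathcal L(E_i\cap[n,n+1))>1/k$. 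It follows that $I$ is a countable union, over $(n,k)\in\mathbb Z\times\mathbb N$, of finite sets, so $I$ is countable. The same argument applies to any $\sigma$-finite measure, which is how it is used in the proof of Theorem~\ref{thm:measure}(iii) with $f\,\di\alpha$. This part is routine.

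For (ii), I would proceed in three steps.

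\emph{Strict monotonicity.} For $\alpha<\beta$ we have $X(\beta)-X(\alpha)=\int_\alpha^\beta X_\alpha>0$, because the integrand is positive a.e. on a set of positive length. So $X$ is a continuous strictly increasing bijection onto the open interval $(\inf X,\sup X)$, and its inverse $Z$ is continuous and strictly increasing. Here I read ``absolutely continuous'' as locally absolutely continuous and the inverse as defined on $X(\mathbb{R})$. If surjectivity is intended, it must be an extra hypothesis, as it holds in the applications.

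\emph{Absolute continuity of $Z$.} A monotone function is absolutely continuous on $[a,b]$ if and only if it maps Lebesgue-null sets to null sets (the Lusin N-property, combined with continuity and bounded variation). So I need: if $\mathcal L(N)=0$, then $\mathcal L(Z(N))=0$. Set $M=Z(N)$, so that $X(M)=N$. By the area formula for the absolutely continuous monotone map $X$,
\[
0=\mathcal L(X(M))=\int_M X_\alpha(\alpha)\,\di\alpha .
\]
Since $X_\alpha>0$ a.e., this forces $\mathcal L(M)=0$.

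\emph{Uniqueness.} Any inverse of the bijection $X$ coincides with $Z$, so uniqueness is immediate.

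The step I expect to need the most care is the N-property for $Z$. The main points are justifying the area formula $\mathcal L(X(M))=\int_M X_\alpha$ for an arbitrary measurable $M$, and then invoking the Banach--Zarecki theorem, which says that a continuous function of bounded variation with the N-property is absolutely continuous. To establish the area formula, I would first verify it for intervals, where it is just the fundamental theorem of calculus. I would then extend it to open sets by countable disjoint unions, and to $G_\delta$ sets by decreasing limits. Finally, I would pass to null sets using the fact that an absolutely continuous $X$ maps null sets to null sets.
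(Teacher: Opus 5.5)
Your proof is correct. The paper gives no argument of its own for this lemma; it defers to the appendix of \cite{gao2022regularity}, so there is no in-paper route to compare against. What you give is the standard self-contained proof. For (i) it is a countable exhaustion by unit intervals and thresholds $1/k$. As you note, this is exactly the finite-measure version used for $f\,\di\alpha$ in the proof of Theorem~\ref{thm:measure}(iii). For (ii) it is strict monotonicity from the fundamental theorem of calculus, then the Lusin (N)-property of the inverse, then Banach--Zarecki.

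One step should be tightened. In the (N)-property argument you apply $\mathcal L(X(M))=\int_M X_\alpha\,\di\alpha$ to $M=Z(N)$ without knowing that $M$ is measurable. Fix this as follows:
\begin{itemize}
\item Work inside a compact subinterval of $X(\mathbb R)$ and replace $N$ by a $G_\delta$ null set $G$ with $N\subset G\subset X(\mathbb R)$.
\item Then $Z(G)=X^{-1}(G)$ is a bounded $G_\delta$ set, and $X(Z(G))=G$.
\item Your area formula, which you only need for $G_\delta$ sets, gives $\int_{Z(G)}X_\alpha\,\di\alpha=\mathcal L(G)=0$. Since $X_\alpha>0$ a.e., $\mathcal L(Z(G))=0$, and $Z(N)\subset Z(G)$ is null.
\end{itemize}
With this, the final step of your area-formula construction (extending to arbitrary measurable sets via the (N)-property of $X$) can be dropped.

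Your caveat about interpretation is also worth keeping. Reading the conclusion as \emph{local} absolute continuity of $Z$ on the open interval $X(\mathbb R)$ is not just convenient but forced. Take $X'=\varepsilon_n\to0$ on $[2n,2n+1]$ and $X'=1$ elsewhere. Then $X$ is Lipschitz and maps onto $\mathbb R$, yet $X^{-1}$ sends intervals of length $\varepsilon_n$ onto intervals of length $1$. So $X^{-1}$ is not uniformly absolutely continuous on $\mathbb R$.
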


We have the following lemma for push-forward measures:
\begin{lemma}\label{lmm:A2} 
Let $X:\mathbb{R}\to\mathbb{R}$ be a continuous increasing surjective function. Define two pseudo-inverse functions of $X$ by
\[
Z_1(x)=\inf\{\alpha:~~X(\alpha)=x\},\quad Z_2(x)=\sup\{\alpha:~~X(\alpha)=x\}.
\]
Define
\[
A^{pp}=\{\alpha:~~X_\alpha(\alpha)=0,~~Z_1(X(\alpha))<Z_2(X(\alpha))\},\quad A^{sc}=\{\alpha:~~X_\alpha(\alpha)=0,~~Z_1(X(\alpha))=Z_2(X(\alpha))\},
\]
and
\[
B=\{\alpha:~~X_\alpha(\alpha)>0\}.
\]
Here, $A^{pp}$ is defined in point-wise sense, and $A^{sc}$ and $B$ are defined in a.e. sense.

Let $0\leq g  \in L^1(\mathbb{R})$ . Consider the measure 
\[
\mu =X\# (g\di \alpha).
\]
Let $g_1=g\cdot 1_B$, $g_2=g\cdot 1_{A^{pp}}$ and $g_3=g\cdot 1_{A^{sc}}$, here $1_{A^{pp}}$, $1_{A^{sc}}$ and $1_B$ are characteristic functions on $A^{pp}$, $A^{sc}$ and $B$ respectively. Then the following statements hold:
\begin{enumerate}
\item[(i)]  $\mathcal{L}(X(A^{pp}\cup A^{sc}) )=0$, where $\mathcal{L}$ is the Lebesgue measure.
\item[(ii)] The absolutely continuous part of $\mu$ is given by $X\#(g_1\di \alpha)$.
\item[(iii)] The set  $X(A^{pp})$ is a countable set, and the pure point part of $\mu$ is given by $X\#(g_2\di \alpha)$.
\item[(iv)]  The singular continuous part of $\mu$ is given by $X\#(g_3\di \alpha)$. 
\end{enumerate}  
\end{lemma}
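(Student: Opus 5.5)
The plan is to read all four statements off the area formula for monotone maps, together with uniqueness of the Lebesgue decomposition. Throughout, $X$ is taken locally absolutely continuous, as it is in every application in the paper, where $X=y(\cdot,t)$ is Lipschitz; this is what gives $X_\alpha$ a meaning. The basic tool is the identity
\[
\mathcal L(X(E))=\int_E X_\alpha(\alpha)\di\alpha
\]
for every Borel set $E$. It holds for intervals because $X$ is absolutely continuous and nondecreasing. It extends to Borel sets by a monotone class argument, since $X$ maps disjoint sets to sets that overlap in at most countably many points, namely the values of $X$ on its flat pieces. Establishing this cleanly, with the null-set bookkeeping, is the only real obstacle; everything else is measure-theoretic routine.

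\textbf{Proof of (i).} The identity applied to $E=A^{pp}\cup A^{sc}$, on which $X_\alpha=0$, gives $\mathcal L(X(A^{pp}\cup A^{sc}))=0$ at once.

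\textbf{Proof of (ii).} Write $\mu_i=X\#(g_i\di\alpha)$, so that $\mu=\mu_1+\mu_2+\mu_3$. To see that $\mu_1\ll\mathcal L$, let $\mathcal L(N)=0$. The identity gives
\[
\int_{X^{-1}(N)}X_\alpha\di\alpha=\mathcal L(N)=0,
\]
using surjectivity, so $X_\alpha=0$ a.e. on $X^{-1}(N)$. Hence $X^{-1}(N)\cap B$ is Lebesgue-null, and therefore $\mu_1(N)=0$. On the other hand, $\mu_2+\mu_3$ is concentrated on the null set $X(A^{pp}\cup A^{sc})$ by (i), so it is singular. Uniqueness of the Lebesgue decomposition then identifies $\mu_{ac}=\mu_1$ and $\mu_s=\mu_2+\mu_3$.

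\textbf{Proof of (iii).} A point $x$ has $Z_1(x)<Z_2(x)$ exactly when $X\equiv x$ on the nondegenerate interval $[Z_1(x),Z_2(x)]$. These intervals are pairwise disjoint for distinct $x$, so there are only countably many of them, and hence $X(A^{pp})$ is countable. This makes $\mu_2$ purely atomic.

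\textbf{Proof of (iv).} The remaining task is to show that $\mu_1$ and $\mu_3$ carry no atoms. For $x\notin X(A^{pp})$, the preimage $X^{-1}(x)$ is a single point and so has Lebesgue measure zero. For $x\in X(A^{pp})$, the interior of $[Z_1(x),Z_2(x)]$ has $X_\alpha=0$ and lies in $A^{pp}$, so the preimage meets $B\cup A^{sc}$ at most in the two endpoints, again a null set. Thus $\mu_3$ has no atoms and is concentrated on a Lebesgue-null set, so it is singular continuous. Since $\mu_1$ is absolutely continuous and $\mu_3$ is atomless, the atomic part of $\mu$ is exactly $\mu_2$ and the singular continuous part is exactly $\mu_3$.
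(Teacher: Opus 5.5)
Your proof is correct and follows the standard route. The paper gives no argument for this lemma (it defers to the appendix of \cite{gao2022regularity}), and your ingredients are the natural ones: the identity $\mathcal L(X(E))=\int_E X_\alpha\,\di\alpha$, the disjointness of the flat intervals $[Z_1(x),Z_2(x)]$ (as in Lemma~\ref{lmm:A1}(i)), and uniqueness of the ac/pp/sc decomposition. The measurability bookkeeping you flagged is settled by $X(E)\setminus C=Z_1^{-1}(E)\setminus C$, where $C$ is the countable set of values taken on flat intervals. One correction to your stated reason for assuming local absolute continuity: monotonicity alone already gives $X_\alpha$ a.e. What absolute continuity really supplies is Lusin's property (N), which is exactly what makes the ``a.e.\ sense'' definitions of $A^{sc}$ and $B$ harmless in (i); it holds in every application here because $y(\cdot,t)$ and $\tilde y(\cdot,t)$ are Lipschitz.
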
	

The last lemma shows that a locally absolutely continuous (i.e., absolutely continuous on any closed and bounded interval of $\mathbb{R}$) function has to be globally absolutely continuous if its derivative is in $L^p(\mathbb{R})$ for some $p\in[1,\infty]$.
\begin{lemma}\label{lem:A3}
Let $1\le p \le \infty$. Then any locally absolutely continuous function  $u$ on $\mathbb{R}$  with its derivative $u_x \in L^p(\mathbb{R})$ is globally absolutely continuous. 
\end{lemma}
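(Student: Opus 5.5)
The plan is to verify the definition of global absolute continuity directly. That is, for every $\varepsilon>0$ we produce a $\delta>0$, independent of location on $\mathbb{R}$, such that for every finite family of pairwise disjoint intervals $(a_i,b_i)\subset\mathbb{R}$ with $\sum_i(b_i-a_i)<\delta$ one has $\sum_i|u(b_i)-u(a_i)|<\varepsilon$.

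The first step uses only local absolute continuity. Each interval $[a_i,b_i]$ is compact, so $u$ is absolutely continuous there, and the fundamental theorem of calculus for absolutely continuous functions gives $u(b_i)-u(a_i)=\int_{a_i}^{b_i}u_x\,\mathrm{d}x$. Summing over the disjoint family, with $E:=\bigcup_i(a_i,b_i)$ and $\mathcal{L}(E)<\delta$, we get
\[
\sum_i|u(b_i)-u(a_i)|\le\int_E|u_x|\,\mathrm{d}x .
\]
This integral is finite even if $u_x\notin L^1(\mathbb{R})$, because $E$ is a bounded set. The problem therefore reduces to bounding $\int_E|u_x|$ uniformly over all measurable sets $E$ with $\mathcal{L}(E)<\delta$.

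The second step splits into cases according to $p$.
\begin{itemize}
\item For $p=\infty$: $\int_E|u_x|\le\|u_x\|_{L^\infty}\,\mathcal{L}(E)$, so we take $\delta=\varepsilon/(1+\|u_x\|_{L^\infty})$.
\item For $1<p<\infty$: Hölder's inequality on $E$ gives $\int_E|u_x|\le\|u_x\|_{L^p(\mathbb{R})}\,\mathcal{L}(E)^{1-1/p}$, and we choose $\delta$ so that $\|u_x\|_{L^p}\,\delta^{1-1/p}<\varepsilon$.
\item For $p=1$: we invoke the absolute continuity of the Lebesgue integral for the single function $|u_x|\in L^1(\mathbb{R})$, which gives $\delta$ with $\int_E|u_x|<\varepsilon$ whenever $\mathcal{L}(E)<\delta$.
\end{itemize}
To justify the $p=1$ fact, truncate: write $|u_x|=\min(|u_x|,N)+(|u_x|-N)_+$. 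Choose $N$ by dominated convergence so that $\int(|u_x|-N)_+<\varepsilon/2$, and then take $\delta=\varepsilon/(2N)$.

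The only step that needs any care is the $p=1$ case, since there no Hölder-type uniform bound is available and one must use uniform integrability of a single $L^1$ function. Even that step is classical. It is also worth noting explicitly that for $p>1$ no global integrability of $u_x$ is needed, because the argument only ever integrates over sets of finite measure.
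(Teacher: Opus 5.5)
Your proof is correct and is the standard argument. On each compact interval you apply the fundamental theorem of calculus to reduce to bounding $\int_E|u_x|$ over a set $E$ of small measure, then use H\"older for $p>1$ and absolute continuity of the integral for $p=1$. The paper gives no proof of this lemma itself; it defers to the appendix of \cite{gao2022regularity}. You also correctly read ``globally absolutely continuous'' in the $\varepsilon$--$\delta$ sense, which is the only reading under which the lemma holds for $p>1$, since $u_x$ need not lie in $L^1(\mathbb{R})$.
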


\section{Fully dissipative case $\lambda=1$}\label{app:dissipative}
In this appendix, we will sketch the main idea for the characteristics method for the $\lambda$-dissipative solution with $\lambda=1$. For simplicity, we only consider the case $\kappa=0$.

Recall that $\mathcal{D}_1$ is defined by (2) in Definition \ref{def:D}.

Under the boundedness requirement on $u_x$, the incoming cusps are allowed, but not the outgoing cusps, as in Examples \ref{example}, \ref{example2}
for $\lambda=1$. 

Next, we show the explicit formula of characteristics for the dissipative solutions, which shares some similarities with the one used in \cite{bressan2005global}. Without singular energy measures initially, we will directly use the Lagrangian label $\xi\in\mathbb{R}$ instead of $\bar{x}(\alpha)$ for the initial data of characteristics.
Similarly to \eqref{eq:lambda_dissipation}, 
the  explicit formula of characteristics $X(\xi,t)$ is then given by:
\begin{equation}\label{eq:lambda_dissipation1}
\begin{aligned}
X(\xi,t)=&\xi +\bar{u}(\xi)t+\frac{t^2}{4} \int_{-\infty}^\xi(\bar{u}_x^2+\bar{\rho}^2)(\eta)\di \eta -    \frac{1}{2}\int_{0}^{t}(t-s)\int_{-\infty}^{\xi}  (\bar{u}_x^2+\bar{\rho}^2)(\eta)\mathbf{1}_{J_s}(\eta)\di \eta\di s\\
=&\xi +\bar{u}(\xi)t+\frac{1}{2}\int_{0}^{t}(t-s)\int_{(-\infty,\xi)\setminus J_s}  (\bar{u}_x^2+\bar{\rho}^2)(\eta) \di \eta\di s,
\end{aligned}
\end{equation}
where we used the repeated integral formula for the last equality and 
\begin{equation}\label{eq:blowupsets11}
J_t:=\left\{\xi\in \mathbb{R}:\bar{u}_x(\xi)\leq-\frac{2}{t},~~ \bar{\rho}(\xi)=0\right\}.
\end{equation}
The above explicit characteristics \eqref{eq:lambda_dissipation1} resembles the one used in \cite{bressan2005global} for dissipative solutions of HS equation. The set $\mathbb{R}\setminus J_t$ stands for the Lagrangian labels  for which no dissipation occurred before $t$.
Utilizing the characteristics, the dissipative solution $(u(t),\rho(t))$ to the 2HS equation can be recovered by:
\begin{equation}\label{eq:solution22}
\left\{
\begin{aligned}
u(x,t)&=\partial_tX(\xi,t)=\bar{u}(\xi)+\frac{1}{2}\int_{0}^{t} \int_{(-\infty,\xi)\setminus J_s}  (\bar{u}_x^2+\bar{\rho}^2)(\eta) \di \eta \di s, \quad\textrm{where } x=X(\xi,t),\\
\rho(x,t)\di x&=X(\cdot,t)\#(\bar{\rho}\di\xi).
\end{aligned}
\right.
\end{equation}
Then the regularity structures and existence of dissipative solutions to \eqref{eq:2HS} in the sense of Definition \ref{def:dissipative} can be obtained by the same argument as in Theorem \ref{thm:measure} and Theorem \ref{thm:mainexistence}.

For the uniqueness of the dissipative solutions, the idea is the same as \cite{dafermos2011generalized} for the HS equation. The absence of outgoing cusps is the key for uniqueness. We have
\begin{theorem}[Uniqueness of dissipative solutions]\label{thm:uniqueness2}
Let $(\tilde{u},\tilde{\rho})$ be a dissipative solution to the 2HS system \eqref{eq:2HS} in the sense of Definition \ref{def:dissipative} with initial datum $(\bar{u},\bar{\rho})\in\mathcal{D}_1$.  Then the solution is the same as the one given by \eqref{eq:solution22} and hence uniqueness follows.
\end{theorem}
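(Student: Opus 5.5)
The strategy follows Dafermos' argument for the Hunter--Saxton equation, with the Riccati equation replaced by the coupled system of Lemma~\ref{lmm:ODEs}. Let $(\tilde u,\tilde\rho)$ be a dissipative solution. Since there are no measures in Definition~\ref{def:dissipative}, we work with the energy density $e=\tilde u_x^2+\tilde\rho^2\in L^\infty(0,\infty;L^1)$. Because $\tilde u$ is continuous and $e(\cdot,t)\in L^1$, the equation $\tilde u_t+(\tilde u^2/2)_x=\tilde F$ holds with $\tilde F=\frac12\int_{-\infty}^x e\,\di y$ bounded and continuous in $x$. Lemma~\ref{lmm:dafermos} therefore applies.

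\textbf{Step 1: forward characteristics.} The one-sided bound $\tilde u_x\le C_K$ on compact sets $K\subset\mathbb R\times(0,\infty)$ makes $\tilde u$ one-sided Lipschitz. Hence the forward characteristic $\dot X=\tilde u(X,t)$, $X(\xi,0)=\xi$, is unique for every $\xi$ (Filippov/Dafermos). The map $X(\cdot,t)$ is nondecreasing, Lipschitz, and surjective. Near $t=0$ we use the strong $L^2$ continuity at $t=0$ in place of the upper bound.

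By Lemma~\ref{lmm:dafermos}, $U(\xi,t)=\tilde u(X(\xi,t),t)$ satisfies $\dot U=\tilde F(X,t)$. Transport of $\tilde\rho$, as in Step~3 of Theorem~\ref{thm:uniqueness1}, gives $\tilde\rho(X,t)X_\xi=\bar\rho(\xi)$ a.e.

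\textbf{Step 2: Lagrangian ODEs.} Set $q=X_\xi$ and $p=U_\xi$. Repeating the difference-quotient argument of Lemma~\ref{lmm:ODEs} gives $q_t=p$ and $p_t=\frac12 e(X,t)q$ on $\{q>0\}$. It also gives the inequality $p_t\ge0$ in general, because the energy between two characteristics $\xi<\xi+h$ is what drives $U(\xi+h)-U(\xi)$.

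On the initial positivity component, the complex Riccati reduction of Proposition~\ref{eq:Jt_tildeAt} yields $q=D_\xi(t)$. So $q$ first vanishes exactly at $T_\xi=-2/\bar u_x(\xi)$ when $\bar\rho(\xi)=0$ and $\bar u_x(\xi)<0$, that is, $\tilde J_t=J_t$ with $J_t$ as in \eqref{eq:blowupsets11}.

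\textbf{Step 3: no revival after breaking (the key step).} We must show that $q(\xi,t)=0$ for all $t\ge T_\xi$, so that labels in $J_t$ carry no energy afterwards. Suppose $q(\xi,\cdot)$ became positive again on some component $(a,b)$ with $a\ge T_\xi>0$. On that component $\rho_\xi\equiv0$, since $\bar\rho(\xi)=0$. The argument of Proposition~\ref{pro:B} then gives $q=C(s-a)^2$ and $w_\xi(s)=\tilde u_x(X(\xi,s),s)=2/(s-a)$, which is unbounded as $s\downarrow a$.

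For a positive-measure set of such labels, the points $(X(\xi,s),s)$ stay in a compact subset of the upper half-plane. This contradicts the assumed upper bound on $\tilde u_x$. The delicate points are making this measure-theoretic: the bound holds only a.e. in $x$, so we use the area formula on the set $\{q>0\}$ to transfer it to a.e. label. We also need to exclude the case $a=0$, which concerns labels in $A^L_0$; there are none here, because the initial data has no singular part.

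Consequently $e(X,t)q=(\bar u_x^2+\bar\rho^2)(\xi)\mathbf 1_{J_t^c}(\xi)$ a.e., and
\[
\int_{-\infty}^{X(\xi,t)}e(x,t)\,\di x=\int_{(-\infty,\xi)\setminus J_t}(\bar u_x^2+\bar\rho^2)\,\di\eta .
\]

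\textbf{Step 4: conclusion.} Substituting this identity into $\ddot X=\tilde F(X,t)$ and integrating twice with $X(\xi,0)=\xi$ and $\dot X(\xi,0)=\bar u(\xi)$ reproduces \eqref{eq:lambda_dissipation1}. Surjectivity of $X(\cdot,t)$ then gives $\tilde u=u$, and the transport identity for $\tilde\rho$ gives $\tilde\rho=\rho$ via \eqref{eq:solution22}.

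The main obstacle is Step~3. It is where the one-sided bound is essential, and it is exactly what fails for $0<\lambda<1$.
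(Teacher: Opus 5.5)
Your route is the paper's: Dafermos' generalized characteristics, the coupled system of Lemma~\ref{lmm:ODEs} solved by the complex Riccati reduction of Proposition~\ref{eq:Jt_tildeAt}, and the upper bound on $\tilde u_x$ excluding outgoing cusps. Your Step 3 makes explicit what the paper calls the absence of outgoing cusps. The gap is in Steps 1--2, on which Steps 3--4 rest.

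\textbf{The Lipschitz claim is false.} You claim $X(\cdot,t)$ is Lipschitz in $\xi$, but this fails even for the solution \eqref{eq:solution22}. For $\xi\notin J_t$ one has $X_\xi=(1+\frac{t}{2}\bar u_x(\xi))^2+\frac{t^2}{4}\bar\rho^2(\xi)$, which is unbounded because $\bar u_x,\bar\rho$ are only in $L^2$.

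\textbf{Lemma~\ref{lmm:ODEs} cannot simply be repeated.} This is the more serious point. Its label-Lipschitz bounds and the estimate $0\le\partial_tp_h\le\frac12$ come from $\tilde\mu\le\tilde\nu$, $\tilde\nu(t)=\tilde y(\cdot,t)\#(f\,\di\alpha)$ and $f\le1$. That is, they come from the transported measure of Definition~\ref{def:weak}(iv), which guarantees that all energy present at time $t$ is carried by labels. Definition~\ref{def:dissipative} has no such object. Your outline never proves the needed substitute, namely that $X(\cdot,t)$ is absolutely continuous.

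\textbf{Consequence for Step 3.} Without absolute continuity, the identity $\int_{-\infty}^{X(\xi,t)}e\,\di x=\int_{(-\infty,\xi)\setminus J_t}(\bar u_x^2+\bar\rho^2)\,\di\eta$ does not follow from $e(X,t)q=(\bar u_x^2+\bar\rho^2)\mathbf{1}_{J_t^c}$. Energy could sit on $X(N,t)$ for a label-null set $N$ with $\mathcal{L}(X(N,t))>0$. Your revival argument only deals with positive-measure sets of labels, so it cannot detect this.

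\textbf{Where the difficulty lives.} For $t\ge t_0>0$ the one-sided bound makes the forward flow Lipschitz, so absolute continuity propagates from $t_0$ to later times. The whole difficulty is at $t\downarrow0$. That is exactly where your outline only says the strong $L^2$ continuity is used ``in place of the upper bound''. The same issue affects forward uniqueness from $t=0$, which the one-sided bound (degenerating as $t\downarrow0$) does not give.

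\textbf{The missing ingredient.} You need an energy bound between characteristics,
$\int_{X(\xi_1,t)}^{X(\xi_2,t)}(\tilde u_x^2+\tilde\rho^2)\,\di x\le\int_{\xi_1}^{\xi_2}(\bar u_x^2+\bar\rho^2)\,\di\eta$,
extracted from the two hypotheses of Definition~\ref{def:dissipative}. One way: obtain a local inequality $e_t+(\tilde u e)_x\le0$ for $t>0$, where the upper bound on $\tilde u_x$ is what controls the terms of bad sign. Then run a cutoff argument as in Lemma~\ref{lmm:keylemma1}, and use the $L^2$ continuity to reach $t=0$. This bound closes all the holes:
\begin{itemize}
\item Two characteristics $z_1\le z_2$ issuing from the same point enclose no energy, so $\frac{\di}{\di t}(z_2-z_1)\le(z_2-z_1)^{1/2}\bigl(\int_{z_1}^{z_2}\tilde u_x^2\bigr)^{1/2}=0$. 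This gives forward uniqueness from $t=0$.
\item Integrating $\ddot X=\tilde F(X,t)$ twice gives $X(\xi_2,t)-X(\xi_1,t)\le(\xi_2-\xi_1)+t|\bar u(\xi_2)-\bar u(\xi_1)|+\frac{t^2}{4}\int_{\xi_1}^{\xi_2}(\bar u_x^2+\bar\rho^2)$. Hence $X(\cdot,t)$ is absolutely continuous, and it becomes genuinely Lipschitz after reparametrizing by $\alpha$ as in \eqref{eq:barx1}.
\item $\partial_tp_h$ is then dominated by the Hardy--Littlewood maximal function of $\bar u_x^2+\bar\rho^2$. So the difference-quotient argument of Lemma~\ref{lmm:ODEs} goes through label by label.
\end{itemize}
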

\begin{proof}
The main idea of the proof follows that of \cite{dafermos2011generalized} for the HS equation. Proceeding as in the proof of \cite[Theorem 4.1]{dafermos2011generalized}, the primary difficulty lies in deriving and solving ODE systems analogous to \eqref{eq:ODEs}. This is readily overcome by the same method as in Lemma \ref{lmm:ODEs} and Proposition \ref{eq:Jt_tildeAt}. The remaining details are left to the interested reader.

\end{proof}
For long-time behavior, the proof is similar to the $\lambda$-dissipative case for $0\leq\lambda<1$. 

\

\noindent\textbf{Acknowledgements.} 
This work is supported by Shenzhen Start-Up Research Fundation (Grant No. HA1140900425), Shenzhen Basic Research Foundation (Grant No. JCYJ20240813105503005),  Guangdong Basic and Applied Basic Research Foundation (Grant No.  ZJQNRC20241219170240010, 2026A1515011751). 

\

\noindent\textbf{Declarations.}  All authors declare that they have no conflicts of interest.

\

\noindent\textbf{Data availability. } 
Data sharing is not applicable to this article as no datasets
were generated or analyzed during the current study.

\bibliographystyle{plain}
\bibliography{bibofChara}

@article {Gaoliu2025,
    AUTHOR = {Gao, Yu and Liu, Hao},
     TITLE = {On the large-time asymptotic behaviors of
              {$\lambda$}-dissipative solutions to the {H}unter--{S}axton
              equation},
   JOURNAL = {Calc. Var. Partial Differential Equations},
  FJOURNAL = {Calculus of Variations and Partial Differential Equations},
    VOLUME = {65},
      YEAR = {2026},
    NUMBER = {1},
     PAGES = {Paper No. 18},
      ISSN = {0944-2669,1432-0835},
   MRCLASS = {35},
  MRNUMBER = {4999789},
       DOI = {10.1007/s00526-025-03092-5},
       URL = {https://doi.org/10.1007/s00526-025-03092-5},
}

@article{aratyn2006negative,
	title={On negative flows of the {AKNS} hierarchy and a class of deformations of a bihamiltonian structure of hydrodynamic type},
	author={Aratyn, H. and Gomes, J. F. and Zimerman, A. H.},
	journal={J. Phys. A},
	volume={39},
	number={5},
	pages={1099},
	year={2006},
	publisher={IOP Publishing}
}

@article{bressan2005global,
	title={Global Solutions of the {H}unter--{S}axton Equation},
	author={Bressan, A. and Constantin, A.},
	journal={SIAM J. Math. Anal.},
	volume={37},
	number={3},
	pages={996--1026},
	year={2005},
	publisher={SIAM}
}

@article{bressan2005optimal,
	title={An Optimal Transportation Metric for Solutions of the {C}amassa-{H}olm Equation},
	author={Bressan, A. and Fonte, M.},
	journal={Methods Appl. Anal.},
	volume={12},
	number={2},
	pages={191--220},
	year={2005},
	publisher={International Press of Boston}
}

@article{bressan2007asymptotic,
	title={Asymptotic variational wave equations},
	author={Bressan, A. and Zhang, P. and Zheng, Y.},
	journal={Arch. Ration. Mech. Anal.},
	volume={183},
	number={1},
	pages={163--185},
	year={2007},
	publisher={Springer}
}

@article{bressan2007global,
	title={Global conservative solutions of the {C}amassa--{H}olm equation},
	author={Bressan, A. and Constantin, A.},
	journal={Arch. Ration. Mech. Anal.},
	volume={183},
	number={2},
	pages={215--239},
	year={2007},
	publisher={Springer}
}

@article{Bressan2010,
	title={Lipschitz metric for the {H}unter--{S}axton equation},
	author={Bressan, A. and Holden, H. and Raynaud, X.},
	journal={J. Math. Pures Appl.},
	volume={94},
	number={1},
	pages={68--92},
	year={2010},
	publisher={Elsevier}
}

@article{bressan2015unique,
	title={Unique conservative solutions to a variational wave equation},
	author={Bressan, A. and Chen, G. and Zhang, Q.},
	journal={Arch. Ration. Mech. Anal.},
	volume={217},
	number={3},
	pages={1069--1101},
	year={2015},
	publisher={Springer}
}

@article{bressan2017lipschitz,
  title={{L}ipschitz metrics for a class of nonlinear wave equations},
  author={Bressan, A. and Chen, G.},
  journal={Arch. Ration. Mech. Anal.},
  volume={226},
  number={3},
  pages={1303--1343},
  year={2017},
  publisher={Springer}
}

@article{bressan2016uniqueness,
	title={Uniqueness of conservative solutions for nonlinear wave equations via characteristics},
	author={Bressan, A.},
	journal={Bull. Braz. Math. Soc.},
	volume={47},
	number={1},
	pages={157--169},
	year={2016},
	publisher={Springer}
}

@article{chen2018existence,
  title={Existence and uniqueness of the global conservative weak solutions for the integrable {N}ovikov equation},
  author={Chen, G. and Chen, R. M. and Liu, Y.},
  journal={Indiana Univ. Math. J.},
  pages={2393--2433},
  year={2018},
  publisher={JSTOR}
}

@article{chen2006two,
	title={A two-component generalization of the {C}amassa-{H}olm equation and its solutions},
	author={Chen, M. and Liu, S.-Q. and Zhang, Y.},
	journal={Letters in Mathematical Physics},
	volume={75},
	number={1},
	pages={1--15},
	year={2006},
	publisher={Springer}
}

@article{constantin2008integrable,
	title={On an integrable two-component {C}amassa--{H}olm shallow water system},
	author={Constantin, A. and Ivanov, R. I.},
	journal={Phys. Lett. A},
	volume={372},
	number={48},
	pages={7129--7132},
	year={2008},
	publisher={Elsevier}
}

@article{dafermos2011generalized,
	title={Generalized characteristics and the {H}unter--{S}axton equation},
	author={Dafermos, C. M.},
	journal={J. Hyperbolic Differ. Equ.},
	volume={8},
	number={01},
	pages={159--168},
	year={2011},
	publisher={World Scientific}
}

@article{gao2022regularity,
  title={Regularity Structure of Conservative Solutions to the {H}unter--{S}axton Equation},
  author={Gao, Y. and Liu, H. and Wong, T. K.},
  journal={SIAM J. Math. Anal.},
  volume={54},
  number={1},
  pages={423--452},
  year={2022},
  publisher={SIAM}
}

@article{gao2023asymptotic,
	title={Asymptotic Behavior of Conservative Solutions to the {H}unter--{S}axton Equation},
	author={Gao, Y. and Liu, H. and Wong, T. K.},
	journal={SIAM J. Math. Anal.},
	volume={55},
	number={5},
	pages={5483--5525},
	year={2023},
	publisher={SIAM}
}

@article{grunert2022lipschitz,
	title={Lipschitz stability for the {H}unter--{S}axton equation},
	author={Grunert, K. and Tandy, M.},
	journal={J. Hyperbolic Differ. Equ.},
	volume={19},
	number={02},
	pages={275--310},
	year={2022},
	publisher={World Scientific}
}

@article{grunert2018existence,
	title={Existence and {L}ipschitz stability for $\alpha$-dissipative solutions of the two-component {H}unter--{S}axton system},
	author={Grunert, K. and Nordli, A.},
	journal={J. Hyperbolic Differ. Equ.},
	volume={15},
	number={03},
	pages={559--597},
	year={2018},
	publisher={World Scientific}
}

@article{grunert2012global,
	title={Global solutions for the two-component {C}amassa--{H}olm system},
	author={Grunert, K. and Holden, H. and Raynaud, X.},
	journal={Comm. Partial Differential Equations},
	volume={37},
	number={12},
	pages={2245--2271},
	year={2012},
	publisher={Taylor \& Francis}
}

@inproceedings{grunert2015continuous,
	title={A continuous interpolation between conservative and dissipative solutions for the two-component {C}amassa--{H}olm system},
	author={Grunert, K. and Holden, H. and Raynaud, X.},
	booktitle={Forum Math. Sigma},
	volume={3},
	year={2015},
	organization={Cambridge University Press}
}

@article{guan2011global,
	title={Global weak solutions and smooth solutions for a two-component {H}unter-{S}axton system},
	author={Guan, C. and Yin, Z.},
	journal={J. Math. Phys.},
	volume={52},
	number={10},
	pages={103707},
	year={2011},
	publisher={American Institute of Physics}
}

@article{holden2007global,
	title={Global conservative solutions of the {C}amassa--{H}olm equation—a {L}agrangian point of view},
	author={Holden, H. and Raynaud, X.},
	journal={Comm. Partial Differential Equations},
	volume={32},
	number={10},
	pages={1511--1549},
	year={2007},
	publisher={Taylor \& Francis}
}

@article{holden2008global,
	title={Global dissipative multipeakon solutions of the {C}amassa--{H}olm equation},
	author={Holden, H. and Raynaud, X.},
	journal={Comm. Partial Differential Equations},
	volume={33},
	number={11},
	pages={2040--2063},
	year={2008},
	publisher={Taylor \& Francis}
}

@article{holden2009dissipative,
	title={Dissipative solutions for the {C}amassa-{H}olm equation},
	author={Holden, H. and Raynaud, X.},
	journal={Discrete Contin. Dyn. Syst},
	volume={24},
	number={4},
	pages={1047--1112},
	year={2009},
	publisher={Citeseer}
}

@article{hunter1991dynamics,
	title={Dynamics of director fields},
	author={Hunter, J. K. and Saxton, R.},
	journal={SIAM J. Appl. Math.},
	volume={51},
	number={6},
	pages={1498--1521},
	year={1991},
	publisher={SIAM}
}

@article{hunter1994completely,
title={On a completely integrable nonlinear hyperbolic variational equation},
author={Hunter, J. K. and Zheng, Y.},
journal={Physica D},
volume={79},
number={2-4},
pages={361--386},
year={1994},
publisher={Elsevier}
}

@article{lenells2009n,
	title={On the ${N}= 2$ supersymmetric {C}amassa--{H}olm and {H}unter--{S}axton equations},
	author={Lenells, J. and Lechtenfeld, O.},
	journal={J. Math. Phys.},
	volume={50},
	number={1},
	pages={012704},
	year={2009},
	publisher={American Institute of Physics}
}

@article{lenells2013hunter,
	title={The {H}unter-{S}axton system and the geodesics on a pseudosphere},
	author={Lenells, J. and Wunsch, M.},
	journal={Comm. Partial Differential Equations},
	volume={38},
	number={5},
	pages={860--881},
	year={2013},
	publisher={Taylor \& Francis}
}

@article{lenells2013spheres,
	title={Spheres, {K}{\"a}hler geometry and the {H}unter--{S}axton system},
	author={Lenells, J.},
	journal={Proc. R. Soc. Lond. Ser. A},
	volume={469},
	number={2154},
	pages={20120726},
	year={2013},
	publisher={The Royal Society Publishing}
}

@article{nordli2016lipschitz,
	title={A {L}ipschitz metric for conservative solutions of the two-component {H}unter--{S}axton system},
	author={Nordli, A.},
	journal={Methods Appl. Anal.},
	volume={23},
	number={3},
	pages={215--232},
	year={2016},
	publisher={International Press of Boston}
}

@article{Olver,
	title={Tri-Hamiltonian duality between solitons and solitary-wave solutions having compact support},
	author={Olver, P. J. and Rosenau, P.},
	journal={Phys. Rev. E},
	volume={53},
	number={2},
	pages={1900},
	year={1996},
	publisher={APS}
}

@article{pavlov2005gurevich,
	title={The {G}urevich--{Z}ybin system},
	author={Pavlov, M. V.},
	journal={J. Phys. A},
	volume={38},
	number={17},
	pages={3823},
	year={2005},
	publisher={IOP Publishing}
}

@article{wunsch2009hunter,
	title={On the {H}unter--{S}axton system},
	author={Wunsch, M.},
	journal={Discrete Contin. Dyn. Syst.-B},
	volume={12},
	number={3},
	pages={647},
	year={2009},
	publisher={American Institute of Mathematical Sciences}
}

@article{wunsch2011weak,
	title={Weak geodesic flow on a semidirect product and global solutions to the periodic {H}unter--{S}axton system},
	author={Wunsch, M.},
	journal={Nonlinear Anal.},
	volume={74},
	number={15},
	pages={4951--4960},
	year={2011},
	publisher={Elsevier}
}

@article{wunsch2010generalized,
	title={The generalized {H}unter--{S}axton system},
	author={Wunsch, M.},
	journal={SIAM J. Math. Anal.},
	volume={42},
	number={3},
	pages={1286--1304},
	year={2010},
	publisher={SIAM}
}

\end{document}